\def\a{\alpha}
\newcommand{\mA}{\mathcal{A}}
\newcommand{\mC}{\mathcal{C}}
\newcommand{\mD}{\mathcal{D}}
\newcommand{\mF}{\mathcal{F}}
\newcommand{\mH}{\mathcal{H}}
\newcommand{\mK}{\mathcal{K}}
\newcommand{\mM}{\mathcal{M}}
\newcommand{\mS}{\mathcal{S}}
\newcommand{\mV}{\mathcal{V}}
\newcommand{\fa}{\mathfrak{a}}
\newcommand{\fD}{\mathfrak{D}}
\newcommand{\fq}{\mathfrak{q}}
\newcommand{\bfC}{\mathbf{C}}
\newcommand{\bfH}{\mathbf{H}}
\newcommand{\bfI}{\mathbf{I}}
\newcommand{\bfQ}{\mathbf{Q}}
\newcommand{\bfR}{\mathbf{R}}
\newcommand{\bfZ}{\mathbf{Z}}
\newcommand{\Oo}{\mathcal{O}}
\newcommand{\OK}{\mathcal{O}_K}
\newcommand{\ov}{\overline}
\newcommand{\be}{\begin{equation}}
\newcommand{\ee}{\end{equation}}
\newcommand{\bes}{\begin{equation*}}
\newcommand{\ees}{\end{equation*}}
\newcommand{\bs}{\begin{split}}
\newcommand{\es}{\end{split}}
\newcommand{\bss}{\begin{split*}}
\newcommand{\ess}{\end{split*}}
\newcommand{\bmat}{\left[ \begin{matrix}}
\newcommand{\emat}{\end{matrix} \right]}
\newcommand{\bsmat}{\left[ \begin{smallmatrix}}
\newcommand{\esmat}{\end{smallmatrix} \right]}
\newcommand{\bml}{\begin{multline}}
\newcommand{\eml}{\end{multline}}
\newcommand{\bmls}{\begin{multline*}}
\newcommand{\emls}{\end{multline*}}
\DeclareMathOperator{\Cl}{Cl}
\DeclareMathOperator{\diag}{diag}
\DeclareMathOperator{\Frob}{Frob}
\DeclareMathOperator{\Gal}{Gal}
\DeclareMathOperator{\GL}{GL}
\DeclareMathOperator{\GSp}{GSp}
\DeclareMathOperator{\GU}{GU}
\DeclareMathOperator{\Res}{Res}
\DeclareMathOperator{\SL}{SL}
\DeclareMathOperator{\U}{U}
\DeclareMathOperator{\val}{val}
\def\c{\chi}
\newcommand{\hs}{\hspace{2pt}}
\newcommand{\tr}{\textup{tr}\hspace{2pt}}
\theoremstyle{plain}
\newtheorem{thm}{Theorem}
\newtheorem{prop}[thm]{Proposition}
\newtheorem{lemma}[thm]{Lemma}
\theoremstyle{definition}
\newtheorem{definition}[thm]{Definition}
\newtheorem{rem}[thm]{Remark}
\numberwithin{thm}{section}
\numberwithin{equation}{section}
\begin{document}
\author{Tobias Berger and Krzysztof Klosin}
\title{A $p$-adic Hermitian Maass lift}
\subjclass[2010]{11F33, 11F50, 11F55}

\keywords{$p$-adic automorphic forms, Jacobi forms, Hermitian modular forms}

\thanks{The work of the first author was supported by the EPSRC First Grant EP/K01174X/1. The work of the second author was supported by a grant from the Simons Foundation (\#354890, Krzysztof Klosin). In addition the second author was partially supported by a PSC-CUNY Award, jointly funded by The Professional
Staff Congress and The City University of New York.}

\begin{abstract}

For $K$ an imaginary quadratic field with discriminant $-D_K$  and associated quadratic Galois character $\chi_K$, Kojima, Gritsenko and Krieg  studied a Hermitian Maass lift of elliptic modular cusp forms of level $D_K$ and nebentypus $\chi_K$ via Hermitian Jacobi forms to Hermitian modular forms of level one for the unitary group $U(2,2)$ split over $K$. 
We generalize this 
 (under certain conditions on $K$ and $p$)
 to the case of $p$-oldforms of level $pD_K$ and  character $\chi_K$. To do this, we define an appropriate Hermitian Maass space for general level and prove that it is isomorphic to the space of special Hermitian Jacobi forms. 
We then show how to adapt this construction to lift a Hida family of modular forms to a $p$-adic analytic family of automorphic forms in the Maass space of level $p$.
\end{abstract}

\maketitle

\section{Introduction}
Since the groundbreaking work of Hida \cite{HIda86}, there has been a lot of interest in $p$-adic families of modular forms. While interesting in their own right, their use was also instrumental in proving the Iwasawa Main conjecture for $\bfQ$ and totally real fields \cite{MazurWiles84, Wiles90}. More recently, analogous $p$-adic families have been studied for automorphic forms on higher-rank reductive algebraic groups, cf. e.g.,  \cite{Taylorthesis, Chenevier04, Urban11, AndreattaIovitaPilloni15}. 
Such families were used by Skinner and Urban to prove the Iwasawa Main Conjecture for $\GL(2)$ \cite{SkinnerUrban14}. 

In \cite{Kawamura10preprint} Kawamura provides a construction of a $p$-adic family of Ikeda lifts 
from $\GL(2)$ to $\GSp(2n)$ for modular forms of level one. One of the crucial elements of his construction is the existence of a $\Lambda$-adic  Shintani 
 lifting (i.e., a $p$-adic family of such lifts) proved by Stevens \cite{Stevens94}, which associates a $p$-adic family of modular forms of half-integer weight to a $p$-adic family of  modular forms on $\GL(2)$ and relies on interpolating the cycle integrals which express Fourier coefficients. When $n=2$, the Ikeda lift is the same as the Saito-Kurokawa lift. 
 Kawamura's result in  \cite{Kawamura10preprint} generalized previous results on $p$-adic interpolations, 
 in particular, by Guerzhoy \cite{Guerzhoy00}, who proved a $p$-adic interpolation of an essential part of the Fourier expansion of the Saito-Kurokawa lift by using its construction as a combination of the Shintani lifting with the Maass lifting of Jacobi forms to Siegel modular forms.

In this paper we study the 
 Hermitian  Maass lift, which associates an automorphic form $F_f$ on the quasi-split unitary group $\U(2,2)$ to a modular form $f$ on $\GL(2)$, and 
 construct a suitable $p$-adic family of such lifts. Our construction is 
different 
from that of  \cite{Guerzhoy00} and \cite{Kawamura10preprint} as we now explain. 
On the one hand, as opposed to the Ikeda lift, the Fourier coefficients of $F_f$ can be expressed 
explicitly by the Fourier coefficients of $f$. This allows us to use the known interpolation properties of $f$ to interpolate $F_f$ in a more direct fashion than is the case in \cite{Kawamura10preprint}. 
However, two major 
problems arise. As is well-known, 
not all Fourier coefficients  of a modular form of level prime to $p$ depend $p$-adically analytically on the weight and the form needs to be ``$p$-stabilized'' first to mitigate this problem. However, this procedure produces a form $f_p$ 
of level divisible by $p$. Let 
 $-D$ be the discriminant of the imaginary quadratic field $K$ over which $\U(2,2)$ splits. We assume that $K$ has class number one. Then the only known constructions of the Maass lift \cite{Kojima, Krieg91, Gritsenko90, Ikeda08} are for $f$ a cusp form of level $D$ and character $\chi_K$, the quadratic character associated to the extension $K/\bfQ$, 
 thus not allowing us to lift $f_p$. 
 (\cite{Guerzhoy00} and \cite{Kawamura10preprint} get around this issue by $p$-stabilising the full level lift of the underlying oldform, see Remark \ref{rem4.6} for a comparison to our approach.) 
 This is one of the reasons why we devote a major part of the paper to generalizing the Maass lifting procedure to forms of level $Dp$. In fact, we restrict ourselves here to the subspace of  $p$-oldforms, which is both sufficient for our purposes ($f_p$ is old at $p$) and allows us to reduce some of the proofs to the case of level $D$. 

The second major problem arises from the fact that the family $\tilde{\mF}$ in which $f$ (or more precisely $f_p$)  lives cannot be directly lifted to a family on $\U(2,2)$ by applying the Maass lift to all the specializations of $\tilde{\mF}$. The obstacle lies in the fact that the Maass lifting
procedure ``lifts'' not $f_p$ but $f_p-f_p^c$, where $f_p^c$ is obtained from $f_p$ by applying the complex conjugation to its Fourier coefficients, and complex conjugation is not a $p$-adically continuous operation. We circumvent this problem by essentially reversing the order of these operations. More precisely we construct a different $p$-adic family $\mF$ of modular forms on $\GL(2)$ whose specializations are $p$-stabilizations of the forms $f-f^c$ which lie in the  (analogue for the Hermitian Maass lift of the Kohnen) plus-space of cusp forms of level $Dp$ and character $\chi_K$. 
We show that these specializations can be lifted to a $p$-adic analytic family of Hermitian Maass forms of level $p$, thus providing us with a version of a $\Lambda$-adic Hermitian Maass lift. We achieve this by using local properties of the Galois representation attached to the family $\tilde{\mF}$ proved in \cite{EmertonPollackWeston06}.  
Let us now explain the organization of the paper in more detail. 

Let $K$ be as above and write $\OK$ for the ring of integers of $K$. Let $k$ be a positive integer divisible by $\#\OK^{\times}$. 
We begin the paper by proposing in section \ref{The Maass space and the Jacobi forms} a definition of the Maass space for automorphic forms on $\U(2,2)$ of weight $k$ and 
level $\Gamma_0^{(2)}(N)$ for arbitrary integer $N$. 
We generalize the isomorphism between so-called ``special" Jacobi forms and 
the Maass space 
proved by \cite{Haverkamp95} Satz 7.6 from 
$N=1$ to arbitrary $N$ (see Theorem \ref{Jacobiisom}).  
This completes work on this in \cite{TY12} Proposition 2.2. We use similar arguments as \cite{Ibukiyama12} for the Maass lift to Siegel modular forms, 
but in addition we prove surjectivity of the Hermitian Maass lift onto  
 the Maass space of Hermitian modular forms we defined. Section \ref{Some transformation properties of the theta function} studies some transformation properties of theta functions and proves that there is an injective ``descent" from the special Jacobi forms of level $N$ to elliptic 
 weight $k-1$ forms of level $D N$ 
and character $\chi_K$.  Section \ref{Maass lift of p-old plusforms} is devoted to constructing a lifting from the plus-space 
of $p$-old forms (for $p$ which splits in $K$) of weight $k-1$ level $Dp$ and character $\chi_K$ 
to the space of special Jacobi forms, which combined with the results of section 2 give us the full  Hermitian  Maass lift. In section \ref{The Hecke invariance} we prove the Hecke equivariance of the Maass space and discuss the descent of the Hecke operators for $p$-ordinary eigenforms. Finally in section \ref{A prime to p interpolation} we study $p$-adic families and construct a $\Lambda$-adic Maass lift.

Note that $\Lambda$-adic liftings can be used via pullback formulae to construct $p$-adic $L$-functions (for Eisenstein series see e.g. \cite{BochererSchmidt00, HarrisLiSkinner05, EischenWanpreprint}, for Saito-Kurokawa lifting sketched in \cite{Lithesis}). 
We plan to study the application of our $\Lambda$-adic Maass lift in combination with the pullback formula of \cite{Atobe15} to the construction of $p$-adic Rankin-Selberg $L$-functions  in future work.

We would like to thank Olav Richter for providing us with a copy of the thesis of Klaus Haverkamp.

\section{The Maass space and the Jacobi forms}\label{The Maass space and the Jacobi forms}
Let $K$ be an imaginary quadratic field of discriminant $-D_K$ 
 (i.e., $K=\bfQ(i\sqrt{D_K})$)
and class number one. Write $\Oo$ for the ring of integers of $K$, 
$\chi_K$ for the associated quadratic character and 
 $\mD^{-1}=\frac{\Oo}{\sqrt{-D_K}}$ 
 for the inverse different of $K$. Set $\U(n,n)$ to be the $\bfZ$-group scheme defined by $$\U(n,n):= \left\{M\in \Res_{\Oo/\bfZ} \GL_{2n/\Oo} \mid {}^t\ov{M} \bmat &-I_n\\ I_n \emat M = \bmat &-I_n\\ I_n \emat \right\},$$ where $a \mapsto \ov{a}$ is the automorphism induced by the non-trivial element of $\Gal(K/\bfQ)$. Here $\Res_{\Oo/\bfZ}$ denotes the Weil restriction of scalars and we will write $I_n$ (resp. $0_n$) for the $n\times n$ identity matrix (resp. the $n\times n$ zero matrix). Write $M_n$ for the (additive) $\bfZ$-group scheme of $n\times n$ matrices.
For a positive integer $N$ put 
$$\Gamma_0^{(n)}(N):=\left\{\bmat A&B\\C&D \emat\in \U(n,n)(\bfZ) \mid C \in N M_n(\Oo) \right\}.$$ 
We reserve the notation $\Gamma_0(N)$ for the standard congruence subgroup of $\SL_2(\bfZ)$ whose elements have the lower-left entry divisible by $N$. This group is closely related $\Gamma_0^{(1)}(N)$ as shown by Lemma \ref{Hilfsatz2}.

 Let $\mS_n$ to be the $\bfZ$-group scheme defined by  $\mS_n=\{g \in \Res_{\Oo/\bfZ} M_{n/\Oo} \mid g={}^t\ov{g}\}.$ 
 Note that $\mS_n(\bfZ)$ is the  group of $n \times n$ hermitian matrices with entries in $\Oo$. We set $\mS_n(\bfZ)^{\vee}$ to be the lattice in $\mS_n(\bfQ)$  dual to the lattice $\mS_n(\bfZ)$, i.e., $\mS_n^{\vee}(\bfZ) = \{ g \in \mS_n(\bfQ) \mid \tr (g \mS_n(\bfZ)) \subset \bfZ\}$. 
Here $\tr: K \to \bfQ$ denotes the trace, i.e., $\tr(a) = a+\ov{a}$ and for future use we also introduce the norm $N: K \to \bfQ$ given by $N(a)=a\ov{a}$.
Since we will most frequently have a need to use $\mS_2^{\vee}(\bfZ)$ we will simply denote it by $\mS$.
One has $$\mathcal{S}:=\mathcal{S}_2^{\vee}(\bfZ) = \left\{ \bmat
\ell
& t \\ \ov{t} & m \emat \in M_2(K) \mid \ell, m \in \bfZ, t \in\mD^{-1} \right\}.$$

For $T \in \mS$ we define $\epsilon(T):={\rm max}\{q \in \bfZ_+| \frac{1}{q} T \in \mS\}$.
A holomorphic function $F$ on the Hermitian upper half space $$\bfH_2:=\{Z \in M_2(\bfC) | (Z- {}^t\ov{Z})/(2i)>0 \}$$
 satisfying $$F((AZ+B)(CZ+D)^{-1})=\det(CZ+D)^k F(Z) \, \text{ for all } \gamma=\bmat A&B\\C&D\emat \in \Gamma_0^{(2)}(N)$$ is called a Hermitian modular form of weight $k$ and  level $N$.  We denote by $\mM_k(N)$ the $\bfC$-space of all such forms. Set $e[z]:= e^{2 \pi i z}$. Any $F \in \mM_k(N)$ has a Fourier expansion $$F(Z)=\sum_{T \in \mS, T \geq 0} C_F(T) e[\tr TZ].$$

Let $k$ be a positive integer such that $\# \Oo^{\times} \mid k$.
We say that $F \in \mM_k(N)$ is in the Maass space $\mM^*_k(N)$ if there exists a function $\alpha^*_F: \bfZ_{\geq 0} \to \bfC$ satisfying $$C_F(T)= \sum_{\substack{d \in \bfZ_{+}, d \mid \epsilon(T)\\ \textup{gcd}(d,N)=1}} d^{k-1} \alpha^*_F(D_K \det T/d^2)$$ for all $T \in \mS$, $T \geq 0$, $T \neq 0_2$.

 \begin{rem} \label{2.1} Note that in fact it is enough to define $\alpha_F^*$ on those positive integers $\ell$ which satisfy  $\ell \equiv -D_K N(u)$ (mod $D_K$), where $u$ runs over $\mD^{-1}/\Oo$. 
 Indeed, we claim that every  $\ell$ of the form $D_K \det T/d^2=D_K \det T' (\epsilon(T)/d)^2 $ 
 (where  $\epsilon(T')=1$)  satisfies a congruence of the above type.  This is so because for $T' \in\mS$, we have $-D_K \det T' \subset D_K N(\mD^{-1}/\Oo) \subset \bfZ/D_K\bfZ$ and clearly all squares in $\bfZ/D_K\bfZ$ are norms of elements of $\mD^{-1}/\Oo$.   \end{rem}

We will now  recall from \cite{TY12} Section 2.2 the definition of Hermitian Jacobi forms with level. 
For this it will be useful to record the following lemma.
\begin{lemma}\label{Hilfsatz2} For any matrix $M \in \U(1,1)(\bfZ)$ there exists a matrix $A \in \U(1,1)(\bfZ) \cap M_2(\bfZ)=\SL_2(\bfZ)$ and $\epsilon \in \Oo^{\times}$  such that $M=\epsilon A$. \end{lemma}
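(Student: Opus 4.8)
The plan is to reduce the statement to showing that the determinant of any $M\in\U(1,1)(\bfZ)$ is the square of a unit of $\Oo$; the required scalar $\epsilon$ and integral matrix $A$ then follow almost immediately. Write $M=\mat{a}{b}{c}{d}$ with $a,b,c,d\in\Oo$, and recall that for $n=1$ the defining form is $J=\mat{0}{-1}{1}{0}$, so the condition is ${}^t\overline{M}JM=J$. First I would take determinants: since $\det J=1$ and $\det{}^t\overline{M}=\overline{\det M}$, this yields $N(\det M)=\det M\cdot\overline{\det M}=1$, so $\delta:=\det M$ is an algebraic integer of norm $1$, hence $\delta\in\Oo^{\times}$. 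Rewriting the defining relation as $M^{-1}=J^{-1}\,{}^t\overline{M}\,J$ and comparing with $M^{-1}=\delta^{-1}\mat{d}{-b}{-c}{a}$, a short computation shows that every entry $x\in\{a,b,c,d\}$ satisfies $x=\delta\overline{x}$.

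Next I would analyze the set $L_\delta:=\{x\in\Oo\mid x=\delta\overline{x}\}$, into which all four entries fall. Its $\bfQ$-span $V_\delta=\{x\in K\mid x=\delta\overline{x}\}$ is the kernel of the $\bfQ$-linear map $x\mapsto x-\delta\overline{x}$ on $K$. It is nonzero, as it contains the elements $c+\delta\overline{c}$ (a direct check using $N(\delta)=1$ shows these lie in $V_\delta$, and they are not all zero); and it is not all of $K$, since otherwise $x=\delta\overline{x}$ for every $x$ would force $\delta=1$ and conjugation to be trivial, which is absurd. Hence $\dim_{\bfQ}V_\delta=1$, and $L_\delta=V_\delta\cap\Oo$ is a free $\bfZ$-module of rank one; let $\ell$ be a generator.

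To finish, write $a=a_1\ell,\ b=b_1\ell,\ c=c_1\ell,\ d=d_1\ell$ with $a_1,b_1,c_1,d_1\in\bfZ$, so that $\delta=\det M=(a_1d_1-b_1c_1)\ell^2$. Since $\ell=\delta\overline{\ell}$ gives $\ell^2=\delta\,\ell\overline{\ell}=\delta N(\ell)$, substituting and cancelling $\delta\neq0$ yields $(a_1d_1-b_1c_1)N(\ell)=1$ in $\bfZ$. As $N(\ell)$ is a positive integer, this forces $N(\ell)=1$, so $\ell\in\Oo^{\times}$ and $\delta=\ell^2$. Taking $\epsilon:=\ell$, the matrix $A:=\epsilon^{-1}M$ has entries $x/\ell\in\bfZ$ and determinant $\delta/\ell^2=1$, so $A\in\U(1,1)(\bfZ)\cap M_2(\bfZ)=\SL_2(\bfZ)$ and $M=\epsilon A$, as desired.

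The heart of the argument, and the step I expect to be the main obstacle, is proving that $\delta$ is not merely a unit of norm $1$ but actually a square in $\Oo^{\times}$. The rank-one lattice computation above, and in particular the extraction of $N(\ell)=1$, is what makes this work uniformly: it replaces what would otherwise be a tedious case-by-case check over the various imaginary quadratic fields of class number one, where $\Oo^{\times}$ can be strictly larger than $\{\pm1\}$.
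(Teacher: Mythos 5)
Your argument is correct, and it is genuinely different in character from what the paper does: the paper offers no proof at all, deferring entirely to Hilfsatz 2 of Grosche's 1978 paper, whereas you give a complete, self-contained elementary argument. Your key steps all check out: taking determinants in ${}^t\overline{M}JM=J$ gives $N(\delta)=1$ for $\delta=\det M$, hence $\delta\in\Oo^{\times}$; comparing $M^{-1}=J^{-1}\,{}^t\overline{M}\,J$ with the adjugate formula does yield $x=\delta\overline{x}$ for every entry $x$; the eigenspace $V_\delta$ is nonzero (the image of $x\mapsto x+\delta\overline{x}$ lies in it and is not identically zero, using $N(\delta)=1$) and proper (else conjugation would be trivial), so it is a line, and $L_\delta=V_\delta\cap\Oo$ is an infinite cyclic group containing all four entries; finally the identity $\ell^2=\delta N(\ell)$ combined with $\delta=(a_1d_1-b_1c_1)\ell^2$ forces $N(\ell)=1$, so $\epsilon:=\ell\in\Oo^{\times}$ and $\delta=\epsilon^2$. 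One small point you pass over quickly but which is indeed automatic: once $A=\epsilon^{-1}M$ is integral with $\det A=1$, it lies in $\U(1,1)(\bfZ)$ because ${}^tAJA=\det(A)\,J$ holds for any $2\times 2$ matrix, so $A\in\SL_2(\bfZ)$ suffices. What your route buys is self-containedness and uniformity: it works verbatim for every imaginary quadratic field (no class number hypothesis, no case-by-case treatment of the fields with extra units), and it isolates the real content of the statement, namely that $\det M$ is forced to be the \emph{square} of a unit. What the paper's citation buys is, of course, brevity and deference to a standard reference; a reader wanting the actual mechanism must chase down Grosche, whereas your paragraph could replace the citation outright.
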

\begin{proof} This follows directly from Hilfsatz 2 of \cite{Grosche78}.  
\end{proof}

 Let $\bfH$ denote the complex upper half-plane.
For integers $k>0$   such that $\# \Oo^{\times} \mid k$  and $m \geq 0$, 
there is an action of the Jacobi group $U(1,1)(\bfZ) \ltimes \Oo^2$ (where we write an element of $\U(1,1)(\bfZ)$ as $\epsilon A$ as in Lemma \ref{Hilfsatz2}) on functions on $\bfH \times \bfC^2$ given by
\begin{equation} \label{prop1} \begin{split}  \varphi|_{k,m}[\epsilon A]:=& (c\tau+d)^{-k}e\left[-  m\frac{czw}{c\tau+d}\right]\varphi\left(\frac{a\tau+b}{c\tau+d}, \frac{\epsilon z}{c\tau+d}, \frac{\ov{\epsilon} w}{c\tau+d}\right)\\
\varphi|_{m}[\lambda, \mu] := & e[ m (N(\lambda)\tau + \ov{\lambda}z + \lambda w)]\varphi(\tau, z+\lambda \tau + \mu, w +\ov{\lambda} \tau + \ov{\mu}).\end{split}\end{equation} 

\begin{rem} Note that the decomposition $M=\epsilon A$ as in Lemma \ref{Hilfsatz2} is not unique, but if $\epsilon A = \epsilon' A'$ are two different decompositions, then we must have $\epsilon'=-\epsilon$ and $A=-A'$. Since $\# \Oo^{\times} \mid k$, i.e.,  in particular $k$ is even, the action in (\ref{prop1}) is well-defined. \end{rem}

For integers $k>0$ and $m \geq 0$,
let $J_{k, m}(N)$ denote the space of Jacobi forms of weight $k$, index $m$ and level $N$. Such forms $\varphi$ are holomorphic functions on $\bfH \times \bfC^2$ required to satisfy the following conditions:
\begin{itemize} \item $\varphi|_{k,m}[\epsilon A]=\varphi$ for all $A \in \Gamma_0(N)\subset {\rm SL}_2(\bfZ)$ 
and $\epsilon \in \Oo^{\times}$ and 
$\varphi|_{m}[\lambda, \mu]=\varphi$ for all $\lambda, \mu \in \Oo$.
\item For each $M \in {\rm SL}_2(\bfZ)$, $\varphi|_{k,m} [M]$ 
 has a Fourier expansion (see e.g. \cite{TY12} p. 1953) of the form 
$$(\varphi|_{k,m} [M])(\tau, z, w)=\sum_{\substack{\ell \in \mathbf{Z}_{\geq 0}, t \in \mD^{-1}\\ \nu N(t) \leq \ell m}} c_{\varphi}^M(\ell, t) e[\frac{\ell}{\nu} \tau + \ov{t} z + t w],$$ where $\nu \in \bfZ_+$ depends on $M$ (and equals $1$ for $M=I_2$). 
 For $M=I_2$ 
 we write $c_{\varphi}(\ell, t):=c_{\varphi}^M(\ell, t)$. 
\end{itemize}

For a positive integer $m$, define $$\Delta_N(m):= \left\{\bmat a&b \\ Nc & d\emat \mid a,b,c,d \in \bfZ, ad-bcN=m, \textup{gcd}(a,N)=1\right\}.$$ 
Following \cite{Kojima} we extend the action of $\SL_2(\bfZ) \subset \U(1,1)(\bfZ)$ on functions on $\bfH \times \bfC^2$ defined in (\ref{prop1}) to that of $\GL_2(\bfR)^+$ (the plus indicates positive determinant) by setting  
$$\varphi|_{k,t}[S] (\tau, z, w):= \varphi\left(\frac{a\tau+b}{c\tau+d}, \frac{\sqrt{\det S} z}{c\tau +d},  \frac{\sqrt{\det S} w}{c\tau +d}\right)e\left[-\frac{ tczw}{c\tau+d}\right](c\tau+d)^{-k}$$ for any $S=\bmat a&b \\ c& d \emat \in \GL_2(\bfR)^+$.
For $t=1$ we also write $$\varphi|[S]_{k} (\tau, z, w):=\varphi|_{k,1}[S] (\tau, z, w).$$
Define the index shifting operator $$V_m: J_{k,t}(N) \to J_{k, mt}(N)$$ by $$(V_m\varphi)(\tau, z,w) := m^{k-1} \sum_{g \in \Gamma_0(N) \setminus \Delta_N(m)} (\varphi|_{k,t}[g])(\tau, z,w).$$ 
 Since $\varphi$ is  invariant under $\Gamma_0(N)$ we see that $(V_m\varphi)(\tau, z,w)$ is well-defined.

 For $\varphi \in J_{k,1}(N)$ we also define  $$(V_0 \varphi)(\tau, z, w):= c_{\varphi}(0,0) \left ( -\frac{B_k}{2k}\prod_{p \mid N} (1-p^{k-1})+ \sum_{n \in \bfZ_+} \sum_{\substack{d \mid n\\ \textup{gcd}(d,N)=1}} d^{k-1} e[n \tau]\right ),$$ where $B_k$ denotes the $k$th Bernoulli number.

The Fourier expansion of $F \in \mM_k(N)$ can be rewritten as a Fourier-Jacobi expansion as follows: $$F(Z) = \sum_{m=0}^{\infty}\varphi_m(\tau, z, w) e[ m \tau^*],$$ where $$Z=\bmat \tau& z \\ w & \tau^*\emat \in \bfH_2.$$ 
As in \cite{Haverkamp95} Satz 7.1 we have that the $m$-th Fourier-Jacobi coefficients $\varphi_m$ lies in $J_{k,m}(N)$.

\begin{prop} \label{unique} If $F\in \mM_k^*(N)$, then it is uniquely determined by $\varphi_1$. \end{prop}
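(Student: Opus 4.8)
The plan is to recast the statement entirely in terms of Fourier coefficients: since $F \in \mM^*_k(N)$, every coefficient $C_F(T)$ with $T \neq 0$ is governed by the single function $\alpha^*_F$, so it suffices to show that $\varphi_1$ determines $\alpha^*_F$ and, separately, the constant term $C_F(0_2)$. First I would compare the Fourier and Fourier--Jacobi expansions. Writing $Z=\mat{\tau}{z}{w}{\tau^*}$ and $T=\mat{\ell}{t}{\ov t}{m}$ one has $\tr(TZ)=\ell\tau+\ov t z+t w+m\tau^*$, so the $m$-th Fourier--Jacobi coefficient is $\varphi_m(\tau,z,w)=\sum_{\ell,t}C_F\bigl(\mat{\ell}{t}{\ov t}{m}\bigr)e[\ell\tau+\ov t z+t w]$; in particular $c_{\varphi_1}(\ell,t)=C_F\bigl(\mat{\ell}{t}{\ov t}{1}\bigr)$. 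The key point is that for $T=\mat{\ell}{t}{\ov t}{1}$ the bottom-right entry $1$ forces $\epsilon(T)=1$, so the Maass relation collapses to the single term $c_{\varphi_1}(\ell,t)=\alpha^*_F(D_K\det T)=\alpha^*_F\bigl(D_K(\ell-N(t))\bigr)$. Thus the coefficients of $\varphi_1$ compute $\alpha^*_F$ at all integers $D_K(\ell-N(t))$ with $\ell\ge N(t)\ge 0$.

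Next I would check that these integers exhaust the domain of $\alpha^*_F$. Given any $T\ge 0$, $T\neq 0$, and any $d\mid\epsilon(T)$ with $\gcd(d,N)=1$, write $T=\epsilon(T)T'$ with $T'=\mat{a}{s}{\ov s}{b}$ primitive (as in Remark~\ref{2.1}); then $D_K\det T/d^2=m^2 D_K\det T'$ with $m=\epsilon(T)/d\in\bfZ_+$. Setting $\tilde T=\mat{m^2 ab}{ms}{m\ov s}{1}$ gives a positive semidefinite matrix with bottom-right entry $1$, off-diagonal $ms\in\mD^{-1}$, and $\det\tilde T=m^2\det T'$, so $D_K\det T/d^2=D_K\det\tilde T$ is exactly the $\alpha^*_F$-argument occurring as the Fourier coefficient $c_{\varphi_1}(m^2 ab,ms)$ of $\varphi_1$ (and $\alpha^*_F(0)=c_{\varphi_1}(0,0)$ via $\tilde T=\mat{0}{0}{0}{1}$). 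Hence every value of $\alpha^*_F$ entering the Maass relation is read off from $\varphi_1$, and therefore $C_F(T)$ is determined by $\varphi_1$ for every $T\neq 0$.

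The remaining, and I expect only genuinely delicate, point is the constant term $C_F(0_2)$, about which the Maass relation says nothing. It is the constant Fourier coefficient of $\varphi_0$. Since positivity of $\mat{\ell}{t}{\ov t}{0}$ forces $N(t)\le 0$, hence $t=0$, the coefficient $\varphi_0(\tau)=\sum_{\ell\ge 0}C_F\bigl(\mat{\ell}{0}{0}{0}\bigr)e[\ell\tau]$ is independent of $z,w$ and lies in $J_{k,0}(N)$, i.e.\ is an elliptic modular form of weight $k$ on $\Gamma_0(N)$; its coefficients for $\ell\ge 1$ are among the $C_F(T)$ with $T\neq 0$ already determined above. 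I would then invoke rigidity of positive weight: a holomorphic modular form of weight $k>0$ all of whose positive Fourier coefficients vanish has $q$-expansion equal to a constant, hence is constant on $\bfH$, hence (being of weight $k>0$) is identically zero. Applying this to the difference of the zeroth Fourier--Jacobi coefficients of two forms in $\mM^*_k(N)$ sharing the same $\varphi_1$ shows these agree, so $C_F(0_2)$ is pinned down as well. Combining the three steps determines every $C_F(T)$, and thus $F$, from $\varphi_1$.
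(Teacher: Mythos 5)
Your proof is correct, but it follows a genuinely different route from the paper's. The paper proves the proposition computationally: it Taylor-expands $e[\tr TZ]$ in $z,w$, packages the Maass relation into coefficient functions $A_{\nu_1,\nu_2}(F\colon n,m)$ satisfying $A_{\nu_1,\nu_2}(F\colon n,m)=\sum_{s\mid(n,m),\,\gcd(s,N)=1}s^{k+\nu_1+\nu_2-1}A_{\nu_1,\nu_2}(F\colon 1,nm/s^2)$, and then, using explicit coset representatives for $\Gamma_0(N)\setminus\Delta_N(m)$, verifies the identity $\varphi_m\left(\tau,\tfrac{z}{\sqrt m},\tfrac{w}{\sqrt m}\right)=(V_m\varphi_1)(\tau,z,w)$ for $m\geq 1$, handling $\varphi_0$ by identifying it with an explicit Eisenstein series of level $N$ (so that $\varphi_0=V_0\varphi_1$). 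You instead work purely with Fourier coefficients: the observation that a matrix with lower-right entry $1$ has $\epsilon(T)=1$, so that $c_{\varphi_1}(\ell,t)=C_F\left(\bsmat \ell & t\\ \ov t & 1\esmat\right)=\alpha^*_F\bigl(D_K(\ell-N(t))\bigr)$, together with your auxiliary matrix $\tilde T=\bsmat m^2ab & ms\\ m\ov s & 1\esmat$ realizing every argument $D_K\det T/d^2$ of the Maass relation, reduces everything to $\varphi_1$ directly; and your rigidity argument for $C_F(0_2)$ is sound, since it only needs that $\varphi_0\in J_{k,0}(N)$ (established in the paper just before the proposition via the analogue of \cite{Haverkamp95} Satz 7.1) and that $k>0$ (guaranteed by $\#\Oo^\times\mid k$). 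Your route is shorter, avoids all the divisor-sum manipulations, does not require the existence of the level-$N$ Eisenstein series (only the uniqueness of a positive-weight form with prescribed positive coefficients), and has the added virtue of making explicit why the image of $F\mapsto\varphi_1$ lies in $J_{k,1}^{\rm spez}(N)$: the coefficient visibly depends only on $\ell-N(t)$. What the paper's longer computation buys is the precise identities $\varphi_m(\tau,z/\sqrt m,w/\sqrt m)=V_m\varphi_1$ and $\varphi_0=V_0\varphi_1$, which are reused in the proof of Theorem \ref{Jacobiisom} to define the lift $I(\phi)$ and prove surjectivity; your argument establishes uniqueness but would not by itself supply those formulas for the construction of the inverse map.
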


\begin{rem} For the proof of Proposition \ref{unique} we adapt the arguments of section 3 of \cite{Kojima} carried out for level one and $K=\bfQ(i)$ to our more general situation. We decided to include a detailed proof as the account in \cite{Kojima} is very brief. We also took the opportunity to correct a few small errors in \cite{Kojima}.
\end{rem}
\begin{proof}
We will show that the map $F \mapsto \varphi_1$ defines an injection $\mM_k^*(N) \hookrightarrow J_{k,1}(N)$. 
Recall the Fourier and Fourier-Jacobi expansions
\begin{eqnarray*} F\left(\bmat \tau & z \\ w & \tau^*\emat \right) &=& \sum_{T\in\mS, T \geq 0}C_F(T) e\left[\tr T\bmat \tau & z \\ w & \tau^*\emat \right]\\&=& \sum_{m=0}^{\infty}\varphi_m(\tau, z, w) e[ m \tau^*].\end{eqnarray*} Note that if we write 
$T=\bmat n & \frac{\alpha}{\sqrt{-D_K}}\\ -\frac{\ov{\alpha}}{\sqrt{-D_K}}& m\emat \in \mS$ with $n,m \in \bfZ$, $\alpha \in \Oo$, we have $\det T = nm-\frac{|\alpha|^2}{D_K}$,  hence $T\geq 0$ 
 if and only if all of the following are satisfied: $n,m\in \bfZ_+$ and $\alpha \in \Oo$ is such that $|\alpha|^2\leq D_Knm$. 
Because of this inequality the $0$-th  Fourier-Jacobi coefficient of $F \in \mM^*_k(N)$ picks out the terms for $T=\bmat n&0\\0&0 \emat$ with $n \geq 0$ and, by using the Maass condition for $n \geq 1$, is  given by
 $$\varphi_0(\tau,z,w)=C_F(0_2)+\alpha_F^*(0) \sum_{n=1}^{\infty} \sum_{\substack{d \in \bfZ_+, d \mid n\\ \textup{gcd}(d,N)=1}} d^{k-1} e[n \tau].$$ 
Furthermore this implies \begin{eqnarray*}F\left(\bmat \tau & z \\ w & \tau^*\emat \right) &=& C_F(0_2)+\alpha_F^*(0) \sum_{n=1}^{\infty} \sum_{\substack{d\in \bfZ_+, d \mid n\\\textup{gcd}(d,N)=1}} d^{k-1} e[n \tau]  \\ &+&\sum_{n=1}^{\infty} \sum_{m=1}^{\infty} \sum_{\substack{\alpha \in \Oo\\ |\alpha|^2 \leq D_Knm}} \sum_{\substack{d \in \bfZ_+\\ d\mid n, d\mid m, d\mid \alpha\\\textup{gcd}(d,N)=1}} d^{k-1} \alpha^*_F\left(\frac{D_Knm-|\alpha|^2}{d^2}\right)e\left[\tr T\bmat \tau & z \\ w & \tau^*\emat \right].\end{eqnarray*}
Here and below by writing $d \mid \alpha$ we mean that $d^{-1} \alpha \in \Oo$. 
As the  $0$-th Fourier-Jacobi coefficient restricted to $\bfH$ is an elliptic modular form of weight $k$ we can identify it as a particular Eisenstein series of level $N$ and deduce that $$C_F(0_2)= -\frac{B_k}{2k}\prod_{p \mid N} (1-p^{k-1}) \alpha_F^*(0).$$
This means that the $0_2$-th Fourier Jacobi coefficient coincides with $V_0(\varphi_1)$.

Using the Taylor expansion of the exponential we also get:
\be \begin{split} e\left[\tr T\bmat \tau & z \\ w & \tau^*\emat \right] =& e\left[ w\frac{\alpha}{i \sqrt{D_K}}\right]e\left[ z\frac{\ov{\alpha}}{-i \sqrt{D_K}}\right]e[n\tau + m\tau^*]\\
=& \sum_{\nu_1=0}^{\infty} \sum_{\nu_2=0}^{\infty} \left(\frac{\alpha}{i \sqrt{D_K}}\right)^{\nu_1}\left(\frac{\ov{\alpha}}{-i \sqrt{D_K}}\right)^{\nu_2}(2\pi i w)^{\nu_1}(2\pi i z)^{\nu_2}\frac{1}{\nu_1! \nu_2!} e[n\tau + m\tau^*]\end{split}\ee
Define $$A_{\nu_1, \nu_2}(F: n, m)=\sum_{\substack{\alpha \in \Oo\\ |\alpha|^2 \leq D_Knm}}\sum_{\substack{d \in \bfZ_+\\ d\mid n, d\mid m, d\mid \alpha\\\textup{gcd}(d,N)=1}} d^{k-1} \alpha^*_F\left(\frac{D_Knm-|\alpha|^2}{d^2}\right) \left(\frac{\alpha}{i \sqrt{D_K}}\right)^{\nu_1}\left(\frac{\ov{\alpha}}{-i \sqrt{D_K}}\right)^{\nu_2}.$$ Note that as in \cite{Kojima} one has that \begin{multline}A_{\nu_1, \nu_2}(F: n, m) = \sum_{\substack{\alpha \in \Oo\\ |\alpha|^2 \leq D_Knm}}\sum_{\substack{d \in \bfZ_+\\ d\mid n, d\mid m, d\mid \alpha\\\textup{gcd}(d,N)=1}} d^{k-1} \alpha^*_F\left(D_K \det\left(\bmat 1 & \ov{\alpha}/(-i \sqrt{D_K}d)\\ \alpha/(i \sqrt{D_K}d) & mn/d^2\emat\right)\right)\\ \times \left(\frac{\alpha}{i \sqrt{D_K}}\right)^{\nu_1}\left(\frac{\ov{\alpha}}{-i \sqrt{D_K}}\right)^{\nu_2}.\end{multline}  

On the other hand for $s \in \bfZ_+$ with $s$ dividing 
$\textup{gcd}(n,m,N(\alpha))$ one has 
\be\begin{split} \label{24} A_{\nu_1, \nu_2}(F: 1, mn/s^2) =&\sum_{\substack{\alpha \in \Oo\\ |\alpha|^2 \leq D_Knm/s^2}}\alpha^*_F\left(\frac{D_Knm-|\alpha|^2}{s^2}\right)\\ \times & \left(\frac{\alpha}{i \sqrt{D_K}}\right)^{\nu_1}\left(\frac{\ov{\alpha}}{-i \sqrt{D_K}}\right)^{\nu_2}\\= &A_{\nu_1, \nu_2}(F: mn/s^2,1) .\end{split}\ee
Hence we get $$A_{\nu_1, \nu_2}(F: n, m) = \sum_{\substack{s \in \bfZ_+\\ s\mid n, s\mid m\\\textup{gcd}(s,N)=1}}s^{k+\nu_1+\nu_2-1} A_{\nu_1, \nu_2}(F: 1, nm/s^2).$$ 
Finally using the function $A_{\nu_1,\nu_2}$
we can write the Fourier expansion of $F$ as \be \begin{split} F\left(\bmat \tau & z \\ w & \tau^*\emat \right) =& \sum_{m=1}^{\infty} \varphi_m (\tau, z,w) e[m\tau^*]\\
=& \sum_{m=1}^{\infty}\sum_{n=1}^{\infty} \sum_{\nu_1=0}^{\infty} \sum_{\nu_2=0}^{\infty} A_{\nu_1, \nu_2}(F: n,m)(2\pi i z)^{\nu_1}(2\pi i w)^{\nu_2}\frac{1}{\nu_1!\nu_2!}e[n\tau]e[m\tau^*].\end{split}\ee Hence in particular $$\varphi_m(\tau, z,w) = \sum_{n=1}^{\infty} \sum_{\nu_1=0}^{\infty} \sum_{\nu_2=0}^{\infty} A_{\nu_1, \nu_2}(F: n,m)(2\pi i z)^{\nu_1}(2\pi i w)^{\nu_2}\frac{1}{\nu_1!\nu_2!}e[n\tau].$$ Using this, let us compute $V_m \varphi_1$. To do so, we note that a full set of coset representatives of $\Gamma_0(N)\setminus \Delta_N(m)$ can be taken to be $$\left\{ \bmat A&B \\ &D \emat \mid A,B,D \in \bfZ_+, AD=m, 0 \leq B < D, \textup{gcd}(A,N)=1 \right\}.$$ 
We have 
\be \begin{split} (V_m \varphi_1)(\tau, z,w) = & m^{k-1} \sum_ {g \in \Gamma_0(N)\setminus \Delta_N(m)} (\varphi_1|_{k,1}g)(\tau, z,w)\\
=& m^{k-1} \sum_{\substack{AD=m\\ 0 \leq B < D\\ \textup{gcd}(A,N)=1}}D^{-k}\varphi_1\left(\frac{A\tau+B}{D}, \frac{\sqrt{m}z}{D}, \frac{\sqrt{m}w}{D}\right)D^{-k}\\
=& m^{k-1} \sum_{\substack{AD=m\\ 0 \leq B < D\\ \textup{gcd}(A,N)=1}}D^{-k}\sum_{n=1}^{\infty} \sum_{\nu_1=0}^{\infty} \sum_{\nu_2=0}^{\infty}A_{\nu_1, \nu_2}(F: n,1)\\
&\times \frac{\left(2\pi i \frac{\sqrt{m}z}{D}\right)^{\nu_1}\left(2\pi i \frac{\sqrt{m}w}{D}\right)^{\nu_2}}{\nu_1!\nu_2!}e\left[\frac{nA\tau+nB}{D}\right].
\end{split}\ee
Changing the order of summation and using the fact that for a fixed $D$ and $n$ one has $$\sum_{0 \leq B <D} e\left[\frac{nB}{D}\right] = \begin{cases} 0 \quad \textup{if $D \nmid n$} \\ D \quad\textup{if $D\mid n$}\end{cases}$$  we can re-write the above as
\be \begin{split} (V_m \varphi_1)(\tau, z,w) = &m^{k-1}\sum_{n=1}^{\infty} \sum_{\substack{AD=m\\  \textup{gcd}(A,N)=1\\ D \mid n}}D D^{-k}\sum_{\nu_1=0}^{\infty} \sum_{\nu_2=0}^{\infty}A_{\nu_1, \nu_2}(F: n,1)\\
&\times \frac{\left(2\pi i \frac{\sqrt{m}z}{D}\right)^{\nu_1}\left(2\pi i \frac{\sqrt{m}w}{D}\right)^{\nu_2}}{\nu_1!\nu_2!}e\left[\frac{nA\tau}{D}\right].
\end{split}\ee
Note that 
for a fixed $m$ one has  $$\sum_{n=1}^{\infty} \sum_{\substack{AD=m\\  \textup{gcd}(A,N)=1\\ D \mid n}} = \sum_{\substack{D\mid m\\ \textup{gcd}(m/D,N)=1}}\sum_{\substack{n=Dn'\\ n' \in \bfZ_+}}$$
This change of summation gives us (we still keep $A$ which is now defined to be $m/D$):
\be \label{Vmphi1}\begin{split} (V_m \varphi_1)(\tau, z,w) = &m^{k-1} \sum_{\substack{D\mid m\\ \textup{gcd}(A,N)=1}}D^{1-k}\sum_{n'=1}^{\infty} \sum_{\nu_1=0}^{\infty} \sum_{\nu_2=0}^{\infty}A_{\nu_1, \nu_2}(F: Dn',1)\\
&\times \frac{\left(2\pi i \frac{\sqrt{m}z}{D}\right)^{\nu_1}\left(2\pi i \frac{\sqrt{m}w}{D}\right)^{\nu_2}}{\nu_1!\nu_2!}e\left[n'A\tau\right]\\
=& \sum_{\substack{A\mid m\\ \textup{gcd}(A,N)=1}}A^{k-1}\sum_{n'=1}^{\infty} \sum_{\nu_1=0}^{\infty} \sum_{\nu_2=0}^{\infty}A_{\nu_1, \nu_2}(F: Dn',1)\\
&\times \frac{\left(2\pi i \frac{\sqrt{m}z}{D}\right)^{\nu_1}\left(2\pi i \frac{\sqrt{m}w}{D}\right)^{\nu_2}}{\nu_1!\nu_2!}e\left[n'A\tau\right],
\end{split}\ee
where in the last equality we combined $m^{k-1}$ with $D^{1-k}$ to yield $A^{k-1}$ and noted that we can as well sum over $A$ now defining $D:=m/A$. This for $N=1$ recovers precisely 
 the first equality in (3.6) in \cite{Kojima}, where 
Kojima's $d$ is $m/a$.  

To compare with $V_m \varphi_1$ we now calculate 
\be \begin{split} \varphi_m\left(\tau, \frac{z}{\sqrt{m}},\frac{w}{\sqrt{m}}\right) =& \sum_{n=1}^{\infty} \sum_{\nu_1=0}^{\infty} \sum_{\nu_2=0}^{\infty}A_{\nu_1, \nu_2}(F: n,m) \frac{\left(2\pi i \frac{z}{\sqrt{m}}\right)^{\nu_1} \left(2\pi i \frac{w}{\sqrt{m}}\right)^{\nu_2}}{\nu_1!\nu_2!}e[n\tau]\\
=& \sum_{n=1}^{\infty} \sum_{\nu_1=0}^{\infty} \sum_{\nu_2=0}^{\infty}\sum_{\substack{s \in \bfZ_+\\ s \mid n, s\mid m\\\textup{gcd}(s,N)=1}} s^{k+\nu_1+\nu_2-1}A\left(F: \frac{mn}{s^2}, 1\right)  \\
\times & \frac{\left(2\pi i \frac{z}{\sqrt{m}}\right)^{\nu_1} \left(2\pi i \frac{w}{\sqrt{m}}\right)^{\nu_2}}{\nu_1!\nu_2!}e[n\tau].
\end{split}\ee
Using, similarly as before, that for a fixed $m$ one has $$\sum_{n=1}^{\infty} \sum_{\substack{s \in \bfZ_+\\ s \mid n, s\mid m}} = \sum_{\substack{s \in \bfZ_+\\  s\mid m}}\sum_{\substack{n=sn'\\ n'\in \bfZ_+}}$$ we get that 
\be \begin{split} \varphi_m\left(\tau, \frac{z}{\sqrt{m}},\frac{w}{\sqrt{m}}\right) =& \sum_{\nu_1=0}^{\infty} \sum_{\nu_2=0}^{\infty}\sum_{\substack{s \in \bfZ_+\\ s\mid m\\\textup{gcd}(s,N)=1}} \sum_{n'=1}^{\infty} s^{k+\nu_1+\nu_2-1}A_{\nu_1, \nu_2}\left(F: \frac{mn'}{s}, 1\right)  \\
\times & \frac{\left(2\pi i \frac{z}{\sqrt{m}}\right)^{\nu_1} \left(2\pi i \frac{w}{\sqrt{m}}\right)^{\nu_2}}{\nu_1!\nu_2!}e[n's\tau]\\
=&\sum_{\nu_1=0}^{\infty} \sum_{\nu_2=0}^{\infty}\sum_{\substack{s \in \bfZ_+\\ s\mid m\\\textup{gcd}(s,N)=1}} \sum_{n'=1}^{\infty} s^{k+\nu_1+\nu_2-1}A_{\nu_1, \nu_2}\left(F: \frac{mn'}{s}, 1\right)  \\
\times & \frac{\left(2\pi i \frac{\sqrt{m}z}{(m/s)}\right)^{\nu_1} \left(2\pi i \frac{\sqrt{m}w}{(m/s)}\right)^{\nu_2}}{\nu_1!\nu_2!}s^{-\nu_1-\nu_2}e[n's\tau].
\end{split}\ee
Now redefining $A:=s$ and $D=m/s$, we get 
\be \begin{split} \varphi_m\left(\tau, \frac{z}{\sqrt{m}},\frac{w}{\sqrt{m}}\right) =&\sum_{\nu_1=0}^{\infty} \sum_{\nu_2=0}^{\infty}\sum_{\substack{A \in \bfZ_+\\ A\mid m\\\textup{gcd}(A,N)=1}} \sum_{n'=1}^{\infty} A^{k-1}A_{\nu_1, \nu_2}\left(F: Dn', 1\right)  \\
\times & \frac{\left(2\pi i \frac{\sqrt{m}z}{D}\right)^{\nu_1} \left(2\pi i \frac{\sqrt{m}w}{D}\right)^{\nu_2}}{\nu_1!\nu_2!}e[n'A\tau].
\end{split}\ee

Note that this is the same as (\ref{Vmphi1}). Hence we have proved that $$\varphi_m\left(\tau, \frac{z}{\sqrt{m}}, \frac{w}{\sqrt{m}}\right) = (V_m \varphi_1)(\tau, z,w)$$ and so $\varphi_1$ indeed determines $\varphi_m$ for all $m$, and thus Proposition \ref{unique} is proved.
\end{proof}

\begin{definition}
Let  $J_{k,1}^{\rm spez}(N)$ be the subspace of $J_{k,1}(N)$ of \emph{special} Jacobi forms $\varphi$ whose Fourier coefficients 
 $c_ {\varphi}(\ell, t)$ only depend on $\ell - N(t)$.
\end{definition}

Our goal now is to prove that the map from $\mM_k^*(N)$ to $J_{k,1}(N)$ defined  in the proof of Proposition \ref{unique} is  an isomorphism to $J_{k,1}^{\rm spez}(N)$.
By generalizing Propositions 1.3 and 1.4 of \cite{Haverkamp95} to the case $N\geq 1$ we know:

\begin{prop}
For $D_K=4, 8$ or for $D_K \equiv 3$ (mod 4)  prime we have $$J_{k,1}^{\rm spez}(N)=J_{k,1}(N).$$ (For other $D_K$ these are not equal.)
\end{prop}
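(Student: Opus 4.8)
The plan is to prove the non-trivial inclusion $J_{k,1}(N) \subseteq J_{k,1}^{\rm spez}(N)$, the reverse being immediate from the definition. The strategy is to extract from the two invariance conditions defining a Jacobi form — invariance under the lattice part $[\lambda,\mu]$ with $\lambda,\mu \in \Oo$, and invariance under the units $\epsilon \in \Oo^{\times}$ obtained by taking $A = I_2 \in \Gamma_0(N)$ — enough relations among the Fourier coefficients $c_{\varphi}(\ell, t)$ to force them to depend only on $\ell - N(t)$. The key point, which explains why the statement generalizes \cite{Haverkamp95} from $N=1$ to arbitrary $N$, is that both of these conditions are available for every level: $I_2 \in \Gamma_0(N)$ for all $N$, and the Heisenberg lattice $\Oo^2$ does not involve $N$ at all. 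Hence the coefficient bookkeeping is insensitive to $N$ and the $N=1$ argument carries over verbatim.

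First I would write $\varphi(\tau, z, w) = \sum_{\ell, t} c_{\varphi}(\ell, t)\, e[\ell\tau + \ov{t} z + t w]$ and compute the effect of $\varphi|_1[\lambda, 0] = \varphi$ on coefficients. A direct substitution, using the identity $N(t + \lambda) = N(t) + \tr(\lambda\ov{t}) + N(\lambda)$, shows $c_{\varphi}(\ell, t) = c_{\varphi}(\ell', t')$ whenever $t' = t + \lambda$ and $\ell' - N(t') = \ell - N(t)$; invariance under $[0, \mu]$ is automatic since $\tr(\ov{t}\mu) \in \bfZ$ for $t \in \mD^{-1}$, $\mu \in \Oo$. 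Thus $c_{\varphi}(\ell, t)$ depends only on the discriminant $D := \ell - N(t)$ and on the class of $t$ in $\mD^{-1}/\Oo$. Next, invariance under $\epsilon \in \Oo^{\times}$ (with $A = I_2$) reads $\varphi(\tau, \epsilon z, \ov{\epsilon} w) = \varphi(\tau, z, w)$, which on coefficients gives $c_{\varphi}(\ell, \epsilon t) = c_{\varphi}(\ell, t)$. So $c_{\varphi}(\ell, t)$ is a function of $D$ and of the $\Oo^{\times}$-orbit of $t \bmod \Oo$.

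The crucial observation is that $D$ already pins down this orbit: since $\ell \in \bfZ$ we have $D \equiv -N(t) \pmod{\bfZ}$, and $N(t) \bmod \bfZ$ depends only on $t \bmod \Oo$ by the identity above. Consequently, if the norm map
$$\mathcal N \colon (\mD^{-1}/\Oo)/\Oo^{\times} \longrightarrow \tfrac{1}{D_K}\bfZ/\bfZ, \qquad [t] \longmapsto N(t) \bmod \bfZ$$
is injective, then $D$ determines the orbit of $t \bmod \Oo$ uniquely, hence $c_{\varphi}(\ell, t)$ depends only on $D = \ell - N(t)$ and $\varphi$ is special. So the whole statement reduces to injectivity of $\mathcal N$. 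Identifying $\mD^{-1}/\Oo \cong \Oo/(\sqrt{-D_K})$ and writing $N(t) = N(a)/D_K$ for the corresponding $a$, this becomes the arithmetic assertion: if $a, b \in \Oo/(\sqrt{-D_K})$ satisfy $N(a) \equiv N(b) \pmod{D_K}$, then $a = \epsilon b$ for some $\epsilon \in \Oo^{\times}$.

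This arithmetic claim is the heart of the matter and is exactly where the hypotheses on $D_K$ enter; I would settle it by a short case analysis, which is also the main obstacle. For $D_K = p \equiv 3 \pmod 4$ prime one has $\Oo/(\sqrt{-p}) \cong \bfF_p$ with $\Oo^{\times} = \{\pm 1\}$ for $p>3$ and norm $a \mapsto a^2 \bmod p$, so $N(a) \equiv N(b)$ forces $a = \pm b$ (the field $p=3$ has extra units, but one checks directly that the two nonzero classes still form a single orbit). For $D_K = 4$ (resp.\ $8$) one lists the $4$ (resp.\ $8$) residue classes in $\bfZ[i]/(2)$ (resp.\ $\bfZ[\sqrt{-2}]/(2\sqrt{-2})$), computes the $\Oo^{\times}$-orbits, and verifies that the norms of distinct orbits are distinct modulo $D_K$. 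This gives injectivity of $\mathcal N$ in all three cases and hence $J_{k,1}^{\rm spez}(N) = J_{k,1}(N)$; the parenthetical failure for other $D_K$ corresponds precisely to two distinct orbits sharing a norm value, which produces genuinely non-special forms. The only delicate points are the clean handling of the coefficient transformations and the finite orbit computations for $D_K = 4, 8$.
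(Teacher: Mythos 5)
Your proof of the equality is correct and is essentially the argument the paper implicitly invokes by citing Propositions 1.3 and 1.4 of \cite{Haverkamp95}: invariance under the Heisenberg lattice $\Oo^2$ and under the units $\epsilon I_2$ (both independent of the level $N$, exactly as you observe, which is why the generalization to $N\geq 1$ is immediate) reduces $c_{\varphi}(\ell,t)$ to a function of $\ell - N(t)$ and the $\Oo^{\times}$-orbit of $t$ in $\mD^{-1}/\Oo$, and injectivity of the norm map on orbits, which holds precisely in the listed cases, finishes the argument. One caveat: the parenthetical converse (non-equality for other $D_K$) does not follow formally from two orbits sharing a norm value; one must actually exhibit a Jacobi form whose coefficients separate those orbits (as \cite{Haverkamp95} does), so that part of your write-up is an assertion rather than a proof.
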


The following theorem generalizes part of \cite{Haverkamp95} Satz 7.6 (who treats $N=1$).
\begin{thm} \label{Jacobiisom}
We have an isomorphism $J_{k,1}^{\rm spez}(N) \cong \mathcal{M}^*_k(N)$.
\end{thm}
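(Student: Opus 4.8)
The plan is to upgrade the injection $\iota\colon\mM_k^*(N)\hookrightarrow J_{k,1}(N)$, $F\mapsto\varphi_1$, constructed in the proof of Proposition \ref{unique}, to an isomorphism onto $J_{k,1}^{\rm spez}(N)$. Since injectivity is already established there, it remains to prove (i) that $\iota$ lands inside the special subspace $J_{k,1}^{\rm spez}(N)$, and (ii) that every $\varphi\in J_{k,1}^{\rm spez}(N)$ arises as $\varphi_1$ for some $F\in\mM_k^*(N)$.

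For (i) I would read off the Fourier coefficients of $\varphi_1$ from the definition of the Maass space. Writing an index as $T=\bmat n & \alpha/\sqrt{-D_K}\\ -\ov{\alpha}/\sqrt{-D_K} & 1\emat$ (the case $m=1$), the lower diagonal entry $1$ forces $\epsilon(T)=1$, so the Maass relation degenerates to $C_F(T)=\alpha_F^*(D_K\det T)$. Passing to Jacobi coordinates $\ell=n$, $t=\alpha/\sqrt{-D_K}\in\mD^{-1}$ (for which $N(t)=|\alpha|^2/D_K$), this becomes $c_{\varphi_1}(\ell,t)=\alpha_F^*\big(D_K(\ell-N(t))\big)$, which depends only on $\ell-N(t)$. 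Hence $\varphi_1\in J_{k,1}^{\rm spez}(N)$.

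For (ii) I would invert the construction of Proposition \ref{unique}. Given $\varphi\in J_{k,1}^{\rm spez}(N)$, specialness makes the rule $\alpha^*\big(D_K(\ell-N(t))\big):=c_\varphi(\ell,t)$ a well-defined function on the nonnegative integers of the congruence type isolated in Remark \ref{2.1}. I then set $\varphi_m:=V_m\varphi$ for $m\ge 1$, $\varphi_0:=V_0\varphi$, and
\[
F(Z):=\sum_{m=0}^{\infty}\varphi_m(\tau,z,w)\,e[m\tau^*],\qquad Z=\bmat \tau & z\\ w & \tau^*\emat .
\]
The explicit shape of $V_m\varphi$ in \eqref{Vmphi1}, compared with the computation of $\varphi_m(\tau,z/\sqrt m,w/\sqrt m)$ carried out inside Proposition \ref{unique}, forces the Fourier coefficients $C_F(T)$ of this $F$ to satisfy exactly the Maass relation attached to $\alpha^*$; absolute convergence on $\bfH_2$ (hence holomorphy) follows from the polynomial growth of Jacobi Fourier coefficients, and the Koecher principle removes any worry about the cusps. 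Because $V_1=\mathrm{id}$ we automatically recover $\varphi_1=\varphi$, so $\iota$ is surjective as soon as $F$ is shown to transform correctly.

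The heart of the matter, and the step I expect to be hardest, is the modularity $F(\gamma Z)=\det(CZ+D)^k F(Z)$ for $\gamma\in\Gamma_0^{(2)}(N)$. Invariance under the hermitian translations $\bmat I & B\\ 0 & I\emat$ is built into the Fourier expansion; invariance under the Klingen--Jacobi parabolic is precisely the statement that each $\varphi_m=V_m\varphi$ lies in $J_{k,m}(N)$; and the block-diagonal symmetries $C_F({}^t\ov{U}TU)=C_F(T)$ for $U\in\GL_2(\Oo)$ --- including the exchange $\tau\leftrightarrow\tau^*$ given by $U=\bmat 0&1\\1&0\emat$ --- are immediate since the Maass relation sees $T$ only through the conjugation invariants $\det T$ and $\epsilon(T)$ (and $\#\Oo^\times\mid k$ makes $k$ even, so unit automorphy factors are trivial). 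What is \emph{not} formal is invariance under the Weyl-type generators of $\Gamma_0^{(2)}(N)$ lying outside the Siegel parabolic; following the method of \cite{Ibukiyama12} I would verify these by comparing the Fourier--Jacobi expansions of $F(\gamma Z)$ and $F(Z)$ directly, the decisive inputs being the $V_m$-consistency $\varphi_m(\tau,z/\sqrt m,w/\sqrt m)=(V_m\varphi_1)(\tau,z,w)$ established in Proposition \ref{unique} and the symmetry \eqref{24}. The genuinely new difficulty beyond Haverkamp's $N=1$ treatment is to produce a generating set of $\Gamma_0^{(2)}(N)$ adapted to the level so that checking finitely many elements suffices; Lemma \ref{Hilfsatz2} and the $\GL_2(\bfR)^+$-extension of the slash action are the tools I would marshal to handle the $\U(1,1)$-blocks, and this reduction is where I expect most of the effort to go.
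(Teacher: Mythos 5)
Your overall route coincides with the paper's: injectivity via Proposition \ref{unique}; surjectivity by lifting $\varphi$ through the operators $V_m$; the Fourier-coefficient computation identifying
\begin{equation*}
C_{F}(T)=\sum_{\substack{d \mid \epsilon(T)\\ \textup{gcd}(d,N)=1}} d^{k-1} c_{\varphi}\Bigl(\frac{\ell m}{d^2}, \frac{t}{d}\Bigr),
\end{equation*}
which is where specialness enters to make $\alpha^*$ well defined; invariance under the Jacobi parabolic coming from $V_m\varphi\in J_{k,m}(N)$; and rotation invariance. (On the last point you argue directly that the Maass relation only sees $\det T$ and $\epsilon(T)$, whereas the paper follows Haverkamp's Satz 7.2, generating all $R(U)$, $U\in\GL_2(\Oo)$, from $\epsilon I_4$, the element $\bmat J&0_2\\0_2&J\emat$ and triangular rotations; your variant is legitimate and arguably more direct.) One minor slip: with the paper's $\sqrt{\det S}$-normalization of the extended slash action, the lift must be $I(\varphi)=\sum_m (V_m\varphi)(\tau,\sqrt{m}\,z,\sqrt{m}\,w)\,e[m\tau^*]$; your unrescaled $F=\sum_m (V_m\varphi)(\tau,z,w)\,e[m\tau^*]$ is inconsistent with the identity $\varphi_m(\tau,z/\sqrt{m},w/\sqrt{m})=(V_m\varphi_1)(\tau,z,w)$ that you yourself invoke, and its Fourier--Jacobi coefficients would not lie in $J_{k,m}(N)$.

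The genuine gap is that the entire modularity verification hinges on an explicit generating set for $\Gamma_0^{(2)}(N)$, and you never produce one --- you only announce that producing it ``is where most of the effort would go.'' That missing statement is exactly the paper's Proposition \ref{generation}: $\Gamma_0^{(2)}(N)$ is generated by the rotations $R(U)=\bmat {}^t\ov{U}&0\\0&U^{-1}\emat$ with $U\in\GL_2(\Oo)$ together with the embedded copies $\bmat a&0&b&0\\0&1&0&0\\c&0&d&0\\0&0&0&1\emat$ of $\Gamma_0(N)\subset\SL_2(\bfZ)$, and proving it is the main new technical content of the theorem at level $N$. The paper's argument adapts Klingen and Grosche: one first reduces the first column of an arbitrary element using translations $I(S)$, antitranslations $I'(S)$ (with $S\equiv 0$ mod $N$, to respect the level) and rotations (Lemmas \ref{l3} and \ref{l2}); one then shows the translations themselves are redundant by expressing $I(S)$, for each of the four $\bfZ$-module generators of $\mS_2(\bfZ)$, as a word in rotations and the subset $\fD_0$, and similarly for antitranslations using $N\mA$. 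Without this proposition (or a substitute) you cannot even name the finitely many elements on which your ``Weyl-type'' check is to be performed, so the argument as written is an outline whose decisive step is deferred rather than proved.
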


\begin{proof}
The proof of Proposition \ref{unique} shows that we have an injection of the right hand side into the left hand side, i.e. a form in the Maass space is determined by its first Fourier-Jacobi coefficient.

Given $\phi \in J_{k,1}^{\rm spez}(N)$, 
we claim that \be \label{formjac} I(\phi)\left(\bmat  \tau&z\\w &\tau^* \emat  \right):=\sum_{m=0}^{\infty} (V_m \phi)(\tau,z \sqrt{m}, w \sqrt{m}) e[m \tau^*] \in \mathcal{M}_k^*(N).\ee (This is also stated in \cite{TY12} Proposition 2.2 but we decided to give a proof following \cite{Kojima} for the convenience of the reader.)
For convergence of this series we argue as \cite{Haverkamp95} Satz 7.2 (or \cite{Ibukiyama12} Section 3.1). 
We claim that $I(\phi)$ has a Fourier expansion with coefficients $$C_{I(\phi)}(0_2)=-c_{\varphi}(0,0)   \frac{B_k}{2k}\prod_{p \mid N} (1-p^{k-1})$$ and \begin{equation} \label{FJcoeff}C_{I(\phi)}(T)=\sum_{\substack{d \in \bfZ_{+}, d \mid \epsilon(T)\\ \textup{gcd}(d,N)=1}} d^{k-1} c_{\phi}(\frac{\ell m}{d^2}, \frac{t}{d}) \, \text{ for } T=\bmat  \ell&t\\\ov{t}&m\emat \geq 0, T \neq 0_2.\end{equation}

For this we follow the proof of 
 \cite{EichlerZagier85}  Theorem 4.2 (7) and the explicit expression for $V_m$ given in the proof of Proposition \ref{unique}:
For $m \geq 1$ we have \begin{eqnarray*} (V_m \phi)(\tau, z \sqrt{m}, w \sqrt{m})&=&m^{k-1} \sum_{\substack{ad=m\\\textup{gcd}(a,N)=1}} \sum_{b {\hspace{1pt} \rm mod \hspace{1pt}}d} d^{-k} \phi(\frac{a \tau+b}{d}, \frac{m z}{d}, \frac{m w}{d})\\&=&m^{k-1} \sum_{\substack{ad=m\\ \textup{gcd}(a,N)=1}} \sum_{b {\hspace{1pt} \rm mod \hspace{1pt}}{d}} d^{-k} \sum_{\substack{\ell \in \mathbf{Z}_{\geq 0}, t \in \mD^{-1}\\ N(t) \leq \ell}} c_{\phi}(\ell, t) e[\frac{\ell a \tau}{d} + \ov{t} \frac{mz}{d} + t \frac{mw}{d}]e[\frac{\ell b}{d}]\\ 
&=&m^{k-1} \sum_{\substack{ad=m\\\textup{gcd}(a,N)=1}}d^{1-k} \sum_{\substack{\ell \in \mathbf{Z}_{\geq 0}, d \mid \ell\\ t \in \mD^{-1}, N(t) \leq \ell}} c_{\phi}(\ell, t) e[\frac{\ell a \tau}{d} + \ov{t} \frac{mz}{d} + t \frac{mw}{d}].\end{eqnarray*} Now we eliminate all $d$'s and  change variables by writing $\ell$ for $\ell/d$ to get that the above equals
$$\sum_{\substack{a \mid m\\ \textup{gcd}(a,N)=1}} a^{k-1} \sum_{\substack{\ell \in \mathbf{Z}_{\geq 0}, t \in \mD^{-1}\\ N(t) \leq \ell m/a}} c_{\phi}(\frac{\ell m}{a}, t) e[\ell a \tau + \ov{t} az + t aw].$$ Changing the order of summation and again changing variables by writing $\ell$ for $a \ell$ and $t$ for $at$ we now get
$$\sum_{\substack{\ell \in \mathbf{Z}_{\geq 0}, t \in \mD^{-1}\\ N(t) \leq \ell m}} \sum_{\substack{a \mid \epsilon(T)\\ \textup{gcd}(a,N)=1}} a^{k-1} c_{\phi}(\frac{\ell m}{a^2}, \frac{t}{a}) e[\ell \tau + \ov{t} z + t w].$$

So, if we assume for the moment that $I(\phi) \in \mM_k(N)$ 
this expression for the Fourier expansion implies that $I(\phi)$  lies in the Maass space with the function $\alpha^*_{I(\phi)}$ describing its Fourier coefficients given by \be \label{desc} \alpha^*_{I(\phi)}(D_K {\rm det} T/d^2)= c_{\phi}(\frac{\ell m}{d^2}, \frac{t}{d}).\ee
The latter is well-defined since we assumed $\phi \in J_{k,1}^{\rm spez}(N)$. 

It now remains to prove that $I(\phi) \in \mM_k(N)$. We first adapt the proof of \cite{Klingen59} Satz 3 (in the Siegel case) and that of \cite{Grosche78} Satz 3 (principal congruence subgroup in the Hermitian case) to prove the following:

\begin{prop} \label{generation}
$\Gamma_0^{(2)}(N)$ is generated by matrices of the form \begin{itemize} \item $\bmat  {}^t\ov{U}&0\\0&U^{-1}\emat $ for $U \in {\rm GL}_2(\Oo)$, 
and \item $\bmat  a&0&b&0\\0&1&0&0\\c&0&d&0\\0&0&0&1\emat $ for $\bmat  a&b\\c&d\emat  \in \Gamma_0(N) \subset {\rm SL}_2(\bfZ)$.\end{itemize}
\end{prop}
\begin{proof}
For a matrix $S\in \mS_2(\bfZ)$ 
we will write $I(S):= \bmat I_2 & S \\ 0_2 & I_2\emat$ (translation by $S$) and $I'(S) := \bmat I_2&0_2\\ S & I_2\emat$ (antitranslation by $S$). Also, for $U \in \GL_2(\Oo)$ we set $R(U):= \bmat {}^t\ov{U} & 0 \\ 0& U^{-1}\emat$ (rotation by $U$). We need the following lemmas.

\begin{lemma} \label{l3} Let $\bmat a_1\\a_2\\c_1\\c_2\emat$ be the first column of an element of $\Gamma_0^{(2)}(N)$.  Then there exists   $S\in \mS_2(\bfZ)$  such that 
$$\bmat a_1'\\a_2'\\c_1'\\c_2'\emat := I(S) \bmat a_1\\a_2\\c_1\\c_2\emat$$ satisfies $(a_1', a_2'):=a_1'\Oo + a_2'\Oo=\Oo$.
 \end{lemma}

\begin{proof} Apply \cite{Grosche78} Korollar on p. 149 with $n=2$ and $\fq=\Oo$. \end{proof}
\begin{lemma} \label{l2} Let $\fa=\bmat a_1 \\ a_2\emat$ be a column vector with $a_1, a_2 \in \Oo$ satisfying $(a_1, a_2) =\Oo$. Then there exists $U \in \SL_2(\Oo)$ such that $U \fa = \bmat 1 \\ 0 \emat.$ \end{lemma} 
\begin{proof} Let $x,y \in \Oo$ be such that $xa_1 + ya_2=1$. Then set $U=\bmat x&y \\ -a_2& a_1\emat$. \end{proof}

Let us now prove Proposition \ref{generation}. 
Let $M \in \Gamma_0^{(2)}(N)$. By Lemma \ref{l3} we can find $S \in \mS_2(\bfZ)$ such that $I(S) M$ has the first column whose top two entries $a_1, a_2$ satisfy  $(a_1, a_2)=\Oo$.
So by Lemma \ref{l2} we can multiply $M$ on the left by some $R(U)$ to ensure that the first column of $M$ starts of with $\bmat 1 \\ 0 \emat$. Furthermore, since $M=\bmat A&B\\ C&D\emat \in \U(2,2)(\bfZ)$ we must have ${}^t\ov{A} C \in \mS_2(\bfZ)$. For $A$ of the form $\bmat 1&*\\ 0&*\emat$ this forces the top left entry of $C$ to be in $\bfZ$.  Hence there exists $S \in \mS_2(\bfZ)$ such that $I'(S)M$  has the first column of the form $\bmat 1\\0\\0\\0 \emat$. Moreover, since the lower-left block of $M$ has all entries divisible by $N$ we can take $S$ with $S\equiv 0$ (mod $N$). 
So, now we have $$M=\bmat 1&*&*&*\\ 0&a_0 & * & b_0\\ 0&0&*&*\\0&c_0&*&d_0\emat$$ with $\bmat a_0& b_0 \\ c_0 & d_0\emat \in \Gamma_0^{(1)}(N)$ (note that the zero above $c_0$ is a consequence of ${}^t\ov{A}C$ being hermitian).

It follows from Lemma \ref{Hilfsatz2} and the above arguments that  
 the group $\Gamma_0^{(2)}(N)$ is generated by $I(S), I'(S), R(U)$ and the subset $\fD\subset \Gamma_0^{(2)}(N)$ of matrices of the form $\bmat 1&*&*&*\\ 0&a_0 & * & b_0\\ 0&0&*&*\\0&c_0&*&d_0\emat$ with $\bmat a_0& b_0 \\ c_0 & d_0\emat \in \Gamma_0(N)$. 
 In fact, we only need to consider the set $\fD_0$ of matrices $M=\bmat 1&*&*&*\\ 0&a & * & b\\ 0&0&1&*\\0&c&*&d\emat$ with all $*$ equal to $0$, as we can multiply on the left by $\bmat 1&0&0&0\\ 0&d & 0 & -b\\ 0&0&1&0\\0&-c&0&a\emat$ 
to get $M$ to be of the form $I(S)R(U)$ for appropriate $S, U$. Note that for any matrix of the form $\bmat 1&0&0&0\\ 0&a & 0 & b\\ 0&0&x&0\\0&c&0&d\emat$ we must have $x=1$ since ${}^t\ov{A}D-{}^t\ov{C}B=I_2$. 

 Now we show that we also do not need the translations $I(S)$. It is enough to show that we do not need $I(S)$ for $S\in \mA:= \left\{\bmat 1&0\\ 0&0\emat, \bmat 0&0\\ 0&1\emat, \bmat 0&1\\ 1&0\emat, \bmat 0&\omega\\ \ov{\omega}&0\emat\right\}$ for $$\omega=\begin{cases}\sqrt{-D}& \text{if} -D \not \equiv 1 \mod{4}\\ \frac{1}{2}(1+\sqrt{-D})& \text{if} -D \equiv 1 \mod{4}, \end{cases}$$ as the rest of  matrices in $\mS_2(\bfZ)$ are a $\bfZ$-linear combination of elements of $\mA$. 

Since $I(\bmat 0&0\\ 0&1\emat) \in \fD_0$ we clearly do not need $\bmat 0&0\\ 0&1\emat$. By using (7) in \cite{Klingen59} with $U$ a permutation matrix we see that $I( \bmat 1&0\\ 0&0\emat) = R(U) I(\bmat 0&0\\ 0&1\emat) R(U)^{-1}$,
 so we also do not need the first matrix in $\mA$. We have $$\bmat 1&1\\ 0&1 \emat \bmat 0 & 0\\ 0&1\emat \bmat 1&0\\ 1&1\emat = \bmat 0&1\\ 1&0 \emat + \bmat 1&0\\ 0&1 \emat,$$ so we can again use (7) in \cite{Klingen59} to see that the third matrix in $\mA$ is also not needed, because this gives us $$R(\bmat 1&1\\ 0&1 \emat) I(\bmat 0 & 0\\ 0&1\emat) R(\bmat 1&0\\ 1&1\emat) = I( \bmat 0&1\\ 1&0 \emat)I( \bmat 1&0\\ 0&1 \emat),$$ and we already know that the left hand side is generated by rotations and $\fD_0$ and also the last term on the right-hand side is. Finally, one has 
$$\bmat 1& \omega\\0&1\emat \bmat 0&0\\0&1\emat  \bmat 1&0\\ \ov{\omega}&1\emat  = \bmat 0& \omega\\ \ov{\omega} & 0 \emat  + \bmat \omega\ov{\omega}&0\\0&0\emat + \bmat 0&0\\0&1\emat,$$ hence by the same argument as above we see that the fourth matrix in $\mA$ is also not needed. We can apply the same arguments to antitranslations (replacing $\mA$ by $N\mA$).

 Lastly note that we can replace $\fD_0$ with the second set of matrices in the statement of Proposition \ref{generation} because $$\tilde{J} \bmat a&0&b&0\\0&1&0&0\\c&0&d&0\\0&0&0&1\emat \tilde{J}^{-1} = \bmat 1&0&0&0\\ 0&a & 0 & b\\ 0&0&1&0\\0&c&0&d\emat,$$ 
 where
$\tilde{J}=\bmat 0&1&0&0\\ 1&0&0&0\\ 0&0&0&1\\ 0&0&1&0\emat\in \Gamma^{(2)}_0(N)$.
\end{proof}

Define $\Gamma_{2,1}(N)$ to be all the elements of $\Gamma_0^{(2)}(N)$ whose last row is $(0,0,0,1)$.
Since $V_m \phi \in J_{k,m}(N)$ we get (as in \cite{Haverkamp95} Lemma 1.2 and (22)) that $V_m \phi|_k [M]$ 
for all $M \in \Gamma_{2,1}(N)$. By considering the Fourier expansion of $I(\phi)$ we further see that $I(\phi)$ is invariant under $\bmat J&0_2\\0_2&J \emat $ for $J=\bmat  0&-1\\1&0\emat $.

This allows us to deduce as in \cite{Haverkamp95} Satz 7.2 that $I(\phi)$ is invariant under $\epsilon I_4$ for $\epsilon \in \Oo^{\times}$,  
$\bmat J&0_2\\0_2&J \emat $  and $\Gamma_{2,1}(N)$, which in particular includes $R(U)$ for $U=\bmat \epsilon&\lambda \\ 0&1\emat$.  As \cite{Haverkamp95} shows in the proof of Satz 7.2 matrices of the form  $\bmat  {}^t\ov{U}&0_2\\0_2&U^{-1}\emat $ for $U \in {\rm GL}_2(\Oo)$ can be generated by these,
 so we get by Proposition \ref{generation} that $I(\phi)$ is invariant under $\Gamma_0^{(2)}(N)$ 
and therefore $I(\phi)\in  \mathcal{M}_k(N)$ as desired. 
\end{proof}

\section{Some transformation properties of the theta function}\label{Some transformation properties of the theta function}
In this section we discuss the relationship between Jacobi forms of odd  level $N$ and elliptic modular forms, which uses the so-called theta decomposition of Jacobi forms. For later use we prove a result about the transformation property of the theta functions occurring in this decomposition.
To shorten notation in this section we will write $D$ for $D_K$.

\subsection{Theta decomposition} \label{Theta decomposition}
For $u\in \mD^{-1}$ define $$\vartheta_u(\tau, z, w):=\sum_{a \in u+ \Oo} e[N(a) \tau+\ov{a}z+aw].$$ 
Consider a Jacobi form $\varphi \in J^{\rm spez}_{k,1}(N)$. 
Then its Fourier expansion can be written as \be \label{decompJ} \varphi(\tau, z, w)=\sum_{u \in \mD^{-1}/\Oo} f_u(\tau) \vartheta_u(\tau, z, w), \ee where \be \label{fut} f_u(\tau):=\sum_{\substack{\ell \geq 0\\ \ell \equiv -DN(u) \hspace{1pt} \textup{mod}\hspace{1pt}D}} \alpha_{\varphi}^*(\ell) e[\ell \tau/D]\ee
 for $\alpha_{\varphi}^*(D(m-N(u))):=c_{\varphi}(m,u)$. 
The latter is well-defined by the definition of $ J^{\rm spez}_{k,1}(N)$ and the decomposition \eqref{decompJ} is unique, since the $\vartheta_u$ are linearly independent as functions $(z,w) \mapsto \vartheta_u(\tau, z, w)$ for fixed $\tau \in \bfH$ (see e.g. \cite{Haverkamp95} Proposition 5.1).

\begin{lemma} [\cite{Kojima} Lemma 2.1 (for $\bfQ(i)$),\cite{Shintani75} Proposition 1.6] \label{trafo1}
For $\sigma=\bmat  a&b\\c&d \emat  \in {\rm SL}_2(\bfZ)$ and $u \in \mD^{-1}$ we have
$$\vartheta_u|[\sigma]_1=\sum_{v \in \mD^{-1}/\Oo} M_{u,v}(\sigma) \vartheta_v,$$
 where
$$M_{u,v}(\sigma)=\begin{cases} \frac{-i}{c\sqrt{D}} \sum_{\gamma \in u+ \Oo/c\Oo} e\left[ \frac{a |\gamma|^2-\gamma \ov{v}- \ov{\gamma}v + d |v|^2}{c} \right] & \text{ if } c \neq 0\\ {\rm sign}(a) \delta_{u,av} e[ab|u|^2] & \text{ if } c=0.\end{cases}$$
If $c>0$ and $D \mid c$ then $$M_{u,v}(\sigma)=\delta_{u,dv} e[ab |u|^2] \chi_K(|d|).$$ 
\end{lemma}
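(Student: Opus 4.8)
The plan is to split the argument according to whether $c=0$, to treat the case $c\neq0$ by a single Poisson summation (which produces directly the Gauss-sum shape of $M_{u,v}(\sigma)$), and finally to extract the special case $D\mid c$ by evaluating that Gauss sum. The organizing principle is that $\vartheta_u$ is the theta series of the positive definite norm form $N$ on the rank-two lattice $\Oo\subset\bfC\cong\bfR^2$, whose dual with respect to the trace pairing $\langle a,b\rangle=\tr(a\ov b)$ is exactly $\mD^{-1}$, and that $\Oo$ has covolume $\sqrt D/2$; these two facts account respectively for the appearance of $\mD^{-1}/\Oo$ as the index set and for the $1/\sqrt D$ in the normalizing constant.

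The case $c=0$ is elementary. The only $\sigma\in\SL_2(\bfZ)$ with $c=0$ are $\mat{\epsilon}{b}{0}{\epsilon}$ with $\epsilon=\pm1$, for which $(c\tau+d)^{-1}=\epsilon$ and the Jacobi exponential is trivial. Substituting $\alpha\mapsto\epsilon\alpha$ in the defining sum and using that $N(\alpha)-N(u)\in\bfZ$ and $ab\in\bfZ$ gives $\vartheta_u|[\sigma]_1=\epsilon\,e[ab|u|^2]\vartheta_{\epsilon u}$, which is precisely $\sum_v\mathrm{sign}(a)\delta_{u,av}e[ab|u|^2]\vartheta_v$. This simultaneously settles $T$, $-I_2$, and explains how the phase $e[ab|u|^2]$ enters.

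For $c\neq0$ I would argue by Poisson summation. Writing $\frac{a\tau+b}{c\tau+d}=\frac ac-\frac{1}{c(c\tau+d)}$ and splitting $\alpha\in u+\Oo$ as $\alpha=\gamma+c\beta$ with $\gamma\in(u+\Oo)/c\Oo$ and $\beta\in\Oo$, the ``constant'' part of the exponent contributes the $\gamma$-dependent phase $e[\frac ac N(\gamma)]$, which is well defined modulo $c\Oo$ because $\tr(\mD^{-1}\Oo)\subset\bfZ$, while the remaining Gaussian in $\beta$ is transformed by Poisson summation on $\Oo$. The Fourier transform reinstates a factor $(c\tau+d)$ that cancels the automorphy factor, the covolume $\sqrt D/2$ yields the $1/\sqrt D$, the Gaussian integral yields the $-i$, and the linear cross terms combine with $e[-czw/(c\tau+d)]$ to rebuild $\ov v z+v w$ for the dual variable $v\in\mD^{-1}$. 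Collecting terms produces exactly $\vartheta_u|[\sigma]_1=\sum_{v\in\mD^{-1}/\Oo}M_{u,v}(\sigma)\vartheta_v$ with the stated $M_{u,v}(\sigma)=\frac{-i}{c\sqrt D}\sum_{\gamma\in u+\Oo/c\Oo}e[\frac{a|\gamma|^2-\gamma\ov v-\ov\gamma v+d|v|^2}{c}]$. The careful bookkeeping of this computation --- completing the square in $\beta$, fixing the branch of the square root that pins down the sign of $-i$, and checking convergence --- is the main obstacle, and I would follow \cite{Shintani75} Proposition 1.6 (and \cite{Kojima} Lemma 2.1 for $K=\bfQ(i)$) for the constants.

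Finally, for $c>0$ with $D\mid c$ I would evaluate the Gauss sum. Completing the square via $a|\gamma|^2-\tr(\gamma\ov v)=a|\gamma-a^{-1}v|^2-a^{-1}|v|^2$, with $a^{-1}$ an inverse of $a$ modulo $c$ (which exists since $\gcd(a,c)=1$), reduces $M_{u,v}(\sigma)$ to a phase times the shifted quadratic Gauss sum $\sum_{\gamma'\in(u-a^{-1}v+\Oo)/c\Oo}e[\frac ac N(\gamma')]$. Because $D\mid c$ forces $a^{-1}\equiv d\pmod D$, orthogonality of additive characters on $\Oo/c\Oo$ shows the sum vanishes unless the shift $u-dv$ is zero in $\mD^{-1}/\Oo$, giving the factor $\delta_{u,dv}$; and the residual complete Gauss sum $\sum_{\gamma\in\Oo/c\Oo}e[\frac ac N(\gamma)]$ of the norm form evaluates, by the classical formula for the quadratic Gauss sum attached to $K$, to $ic\sqrt D\,\chi_K(|d|)$ (the absolute value reflecting $\chi_K(-1)=-1$ and the chosen branch of the square root, and using $ad\equiv1\pmod D$ to pass between $a$ and $d$). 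The prefactor $\frac{-i}{c\sqrt D}$ then cancels, leaving $M_{u,v}(\sigma)=\delta_{u,dv}e[ab|u|^2]\chi_K(|d|)$. The one genuinely arithmetic input is this Gauss-sum evaluation, which is where quadratic reciprocity for $K$ enters.
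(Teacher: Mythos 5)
Note first that the paper does not prove this lemma at all: it is quoted from Kojima (Lemma 2.1, for $K=\bfQ(i)$) and Shintani (Proposition 1.6), so your proposal can only be measured against those sources. Your route --- direct substitution for $c=0$, Poisson summation on the lattice $\Oo$ (with trace-dual $\mD^{-1}$ and covolume $\sqrt{D}/2$) for $c\neq 0$, and a Gauss-sum evaluation for the refined formula when $D\mid c$ --- is precisely the route of the cited references, and your first two cases are correct as sketches, with the constants legitimately delegated to Shintani.

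Your third case, however, contains genuine gaps. First, the completing-the-square identity is false as written: if $a^{-1}$ is an integer inverse of $a$ modulo $c$, then replacing $aa^{-1}$ by $1$ inside $e[\,\cdot/c\,]$ perturbs the exponent by \emph{integer} multiples of $\tr(\gamma\ov{v})$ and $|v|^2$, and these quantities are not integers: for $\gamma,v\in\mD^{-1}$ one has $\gamma\ov{v}\in\mD^{-2}=-\tfrac{1}{D}\Oo$, so $\tr(\gamma\ov{v}),\,|v|^2\in\tfrac{1}{D}\bfZ$, and the summand gets multiplied by nontrivial $D$-th roots of unity. The repair is to invert $a$ modulo $Dc$ (possible since $ad\equiv 1\pmod{D}$ forces $\gcd(a,D)=1$); then the errors are integers, and the residual $\gamma$-independent phase $e[(d-a^{-1})|v|^2/c]$ factors out and equals $e[ab|u|^2]$ on the support $u\equiv dv$. (Also note $a^{-1}\equiv d\pmod{c}$ holds for \emph{every} $\sigma\in\SL_2(\bfZ)$, since $ad\equiv 1\pmod{c}$; that congruence is not where $D\mid c$ enters.) Second, the factor $\delta_{u,dv}$ does not follow from ``orthogonality of additive characters'': the sum whose vanishing you need, $\sum_{\gamma\in(u-dv)+\Oo/c\Oo}e[aN(\gamma)/c]$, is quadratic in $\gamma$, and its vanishing for $u-dv\notin\Oo$ is a property of shifted Gauss sums at the ramified prime (isolate the $D$-part by CRT and use that $N$ degenerates to the square of a linear form modulo $\mD$); this step needs an actual argument. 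Third, your sign bookkeeping hides a real discrepancy: the Gauss sum produces $\chi_K(a)$, which equals the Dirichlet-character value $\chi_K(d\bmod D)$, and this agrees with $\chi_K(|d|)$ only when $d>0$. For $d<0$ the two differ: take $D=3$ and $\sigma=\bmat 1&-1\\3&-2\emat =\bmat 1&0\\3&1\emat \bmat 1&-1\\0&1\emat $. The cocycle relation $M(\sigma_1\sigma_2)=M(\sigma_1)M(\sigma_2)$ (valid by linear independence of the $\vartheta_v$), together with $M(\bmat 1&0\\3&1\emat )$ being the identity matrix (apply the formula in question with $d=1$, where there is no ambiguity) and $M_{u,v}(\bmat 1&-1\\0&1\emat )=\delta_{u,v}e[-|u|^2]$ from the $c=0$ case, gives $M_{u,v}(\sigma)=\delta_{u,v}e[-|u|^2]$; whereas the displayed formula yields $-\delta_{u,v}e[-|u|^2]$, because $\chi_K(|-2|)=\chi_K(2)=-1$ and $\delta_{u,-2v}=\delta_{u,v}$ here. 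So a correct execution of your plan produces $\chi_K(d)$ read as a character modulo $D$ --- which is in fact how the paper uses the lemma, cf.\ part (4) of Lemma~\ref{trafo} --- and not $\chi_K(|d|)$; this difference cannot be absorbed into ``the chosen branch of the square root''.
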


 Fix an ordering on
$\mD^{-1}/\Oo$, a group with $D$ elements. For $\sigma \in \SL_2(\bfZ)$ let $M(\sigma)$ be the $D \times D$ matrix whose $(u,v)$-entry is $M_{u,v}(\sigma)$ as defined above. Since $\varphi$ is of level $N$,
 it follows from Lemma \ref{trafo1} and from \cite{Kojima} Lemma 2.2 (which is easily  reproven
for level greater than one) 
that for $\sigma \in \Gamma_0(N)$ we have
$$f_u|_{k-1}\sigma=\sum_{v \in \mD^{-1}/\Oo} N_{u,v}(\sigma) f_v,$$
 for the matrix $N(\sigma)=(N_{u,v}(\sigma))$ defined by $${}^tN(\sigma) M(\sigma)=I_D.$$ Here we use the notation $$(f|_{k-1} \sigma)(z)=(cz+d)^{1-k} f (\sigma(z)) \text{ for } \sigma=\bmat *&*\\c&d \emat \in \GL_2(\bfR)^+.$$ 

From this we can deduce the following: 

\begin{lemma}[Analogue of Corollary in section 4 of \cite{Krieg91} and Korollar 4.4 and Satz 4.5 in \cite{Haverkamp95}] \label{trafo}
For $u \in \mD^{-1}$ we have
\begin{enumerate}
\item $f_u|_{k-1}M=f_u, \text{ if } M \equiv I_2 \mod{DN}$,
\item $f_u(\tau+1)=e[-N(u)] f_u(\tau)$,
\item $\displaystyle f_0|_{k-1} \bmat  1&0\\N&1 \emat =\frac{1}{D} %\mathlarger{
\sum_{v,u \in \mD^{-1}/\Oo}  e[N N(v)+u \ov{v}+\ov{u}v] f_u,$
\item $f_0|_{k-1} M=\chi_K(d) f_0 \text{ for } M=\bmat  a&b\\c&d\emat  \in \Gamma_0(ND)$.
\end{enumerate}
\end{lemma}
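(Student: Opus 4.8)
The plan is to read all four identities off the explicit cocycle $M(\sigma)$ of Lemma \ref{trafo1} together with the relation $f_u|_{k-1}\sigma=\sum_{v}N_{u,v}(\sigma)f_v$ for $\sigma\in\Gamma_0(N)$ recorded just above it, in which $N(\sigma)=({}^tM(\sigma))^{-1}$. Beyond Lemma \ref{trafo1} the one structural fact I will use is multiplicativity $M(\sigma\sigma')=M(\sigma)M(\sigma')$, which holds because $|[\,\cdot\,]_1$ is a genuine (weight-one, hence multiplier-free) right action, so $\vartheta_u|[\sigma\sigma']_1=(\vartheta_u|[\sigma]_1)|[\sigma']_1$, and the $\vartheta_v$ are linearly independent. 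Item (2) is the exception and is handled directly from the expansion \eqref{fut}.

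For (2), replacing $\tau$ by $\tau+1$ in $f_u(\tau)=\sum_{\ell}\alpha^*_\varphi(\ell)e[\ell\tau/D]$, where $\ell$ runs over integers with $\ell\equiv -DN(u)\pmod D$, scales the $\ell$-th term by $e[\ell/D]$; every such $\ell$ satisfies $\ell/D\equiv -N(u)\pmod 1$, so this is the constant factor $e[-N(u)]$, which is (2). For (1) and (4) the matrix lies in $\Gamma_0(ND)$, so either $c=0$ or $D\mid c$, and in both cases Lemma \ref{trafo1} gives a monomial $M(\sigma)$: for $c=0$ its $(u,v)$-entry is $\mathrm{sign}(a)\delta_{u,av}e[ab\,N(u)]$, and for $D\mid c$ (with $c>0$) it is $\delta_{u,dv}e[ab\,N(u)]\chi_K(|d|)$. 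Hence $N(\sigma)=({}^tM(\sigma))^{-1}$ is monomial too and each $f_u$ transforms by one coefficient. In (1), if $M\equiv I_2\pmod{DN}$ then $dv\equiv v$ in $\mD^{-1}/\Oo$ because $(d-1)\mD^{-1}\subseteq D\mD^{-1}=\mD\subseteq\Oo$, while $D\mid ab$ forces $e[ab\,N(u)]=1$; thus $M(\sigma)=\chi_K(|d|)I_D$ is scalar and $N(\sigma)=I_D$, giving $f_u|_{k-1}M=f_u$. In (4) I specialize to $u=0$: then $e[ab\,N(0)]=1$ and $d\cdot 0=0$, so the $0$-th row of $N(\sigma)$ is $\chi_K(d)$ times $e_0$, i.e. $f_0|_{k-1}M=\chi_K(d)f_0$.

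The one delicate point in (1) and (4) is the passage from the $\chi_K(|d|)$ of Lemma \ref{trafo1} to the $\chi_K(d)$ of the statement. Since $M\mapsto(\text{eigenvalue on }f_0)$ is a character of $\Gamma_0(ND)$ by multiplicativity of $M$, and it already matches the nebentypus on $-I_2$ (where the $c=0$ line contributes $\mathrm{sign}(a)=\chi_K(-1)$) and on elements with $d>0$ (where $|d|=d$), it must coincide with $\chi_K(d\bmod D)$; this is the normalization used in \cite{Krieg91, Haverkamp95}, and one reduces the $c<0$ possibility to $c>0$ by left multiplication by $-I_2$.

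The real content is (3), where $\sigma=\bmat 1&0\\N&1\emat$ has $c=N$ not divisible by $D$, so no monomial collapse occurs and inverting the Gauss-sum matrix $M(\sigma)$ head-on is the \emph{main obstacle}. I will sidestep it by the conjugation $\sigma=w\,T^{-N}\,w^{-1}$, with $w=\bmat 0&-1\\1&0\emat$ and $T=\bmat 1&1\\0&1\emat$: by multiplicativity $M(\sigma)=M(w)\,\diag(e[-NN(v)])\,M(w)^{-1}$, where $M(T)=\diag(e[N(v)])$ comes from the $c=0$ line and $M(w)_{u,v}=\tfrac{-i}{\sqrt D}\,e[-\tr(u\ov{v})]$ is the normalized finite Fourier transform on $\mD^{-1}/\Oo$. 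Two short character-sum computations show that $M(w)$ is symmetric and that $M(w)^2=M(-I_2)=-P$, where $P$ is the involution $v\mapsto -v$ (the second reducing to additive-character orthogonality on $\mD^{-1}/\Oo$). Symmetry yields ${}^tM(\sigma)=M(w)^{-1}\diag(e[-NN(v)])M(w)$, hence $N(\sigma)=M(w)^{-1}\diag(e[NN(v)])M(w)$; and $M(w)^2=-P$ makes the $0$-th row of $M(w)^{-1}$ the constant $\tfrac{i}{\sqrt D}$, so the $(0,u)$-entry of $N(\sigma)$ telescopes to $\tfrac1D\sum_v e[NN(v)-\tr(u\ov v)]$. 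Replacing $v$ by $-v$ and summing against the $f_u$ then produces exactly $\tfrac1D\sum_{v,u}e[NN(v)+u\ov v+\ov u v]f_u$, which is (3). Thus the device that removes the obstacle is the conjugation $\sigma=wT^{-N}w^{-1}$, trading the inversion of a full Gauss-sum matrix for the diagonalization of the finite Fourier transform $M(w)$.
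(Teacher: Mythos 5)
Your proof is correct and takes essentially the same route as the paper: for part (3) the paper uses exactly your conjugation $\bsmat 1&0\\N&1 \esmat = J \bsmat 1&-N\\0&1 \esmat J^{-1}$ with $J=\bsmat 0&-1\\1&0 \esmat$, together with the entries of $M(J)$, $M(T)$ and $M(-I_2)$ from Lemma \ref{trafo1}, chaining the three slash actions on $\vartheta_0$. The only differences are matters of bookkeeping: you work at the level of the matrices $M(\sigma)$, $N(\sigma)$ and make explicit the inversion ${}^tN(\sigma)M(\sigma)=I_D$ (the step where the sign flip $e[-NN(v)]\mapsto e[NN(v)]$ between the $\vartheta_0$- and $f_0$-transformations occurs, which the paper leaves implicit), and you write out parts (1), (2), (4), which the paper dispatches by reference to \cite{Krieg91} and \cite{Haverkamp95}.
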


\begin{proof}
The proofs are similar to those in \cite{Krieg91} and \cite{Haverkamp95}. We sketch the proof of (iii) since it is the hardest.
 Note first that $$ \bmat  1&0\\N&1 \emat =J  \bmat  1&-N\\0&1 \emat  J^{-1},$$ where  $J= \bmat  0&-1\\1&0 \emat $.
Now by 
Lemma \ref{trafo1}
 we have $$\vartheta_0|_{1,1}[J]=\frac{-i}{\sqrt{D}} \sum_{v \in \mD^{-1}/\Oo}  \vartheta_v$$ 
and $$\vartheta_v|[\bmat  1&-N\\0&1 \emat ]_{1}=e[-N N(v)] \vartheta_v.$$ 
 Note that $J^{-1}=-J$ and by Lemma \ref{trafo1}
the action of $\sigma=-I_2 \in {\rm SL}_2(\bfZ)$ is given by $M_{h,k}(\sigma)=-\delta_{h,-k}$.
Putting this together we obtain
\begin{eqnarray*}\vartheta_0|[ \bmat  1&0\\N&1 \emat ]_{1}&=&\frac{i}{\sqrt{D}} \sum_{v\in \mD^{-1}/\Oo} e[-NN(v)] \vartheta_v|[J]_{1}\\ &=&\frac{i}{\sqrt{D}} \sum_{v\in \mD^{-1}/\Oo} e[-NN(v)] \cdot \left( \frac{-i}{\sqrt{D}} \sum_{u \in \mD^{-1}/\Oo} e[-u\ov{v}-\ov{u}v] \vartheta_u \right)\\&=&\frac{1}{D} %\mathlarger
\sum\limits_{v,u \in \mD^{-1}/\Oo}  e[-N N(v)+u \ov{v}+\ov{u}v] \vartheta_u.\end{eqnarray*} 
\end{proof}

We recall the following result about quadratic Gauss sums:
\begin{lemma}[\cite{Haverkamp95} Lemma 0.4] \label{gauss}
Assume $D$ is odd and coprime to an odd integer $N$. Let $a \in \bfZ$ be coprime to $N$. Then
$$\frac{1}{N}\sum_{\gamma \in  \Oo/N\Oo} e\left[ a \frac{|\gamma|^2}{N}\right]=\chi_K(N).$$
\end{lemma}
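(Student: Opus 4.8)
The plan is to evaluate the two-variable exponential sum by completing the square and reducing to a product of classical one-variable quadratic Gauss sums. First I would make the ring of integers explicit: since $D$ is odd and $-D$ is the discriminant of $K$, we necessarily have $D \equiv 3 \pmod 4$, so $\Oo = \bfZ[\omega]$ with $\omega = \tfrac{1+\sqrt{-D}}{2}$. Writing $\gamma = x + y\omega$, the norm is the integral binary quadratic form $|\gamma|^2 = Q(x,y) = x^2 + xy + \tfrac{1+D}{4}y^2$, whose discriminant is $1-(1+D) = -D$. Since $Q$ has integer coefficients, the sum in the statement is a sum over $(\bfZ/N\bfZ)^2$, namely $\sum_{x,y \bmod N} e[aQ(x,y)/N]$, and completing the square gives the exact identity $4Q(x,y) = (2x+y)^2 + Dy^2$.

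Next, because $N$ is odd, $2$ and $4$ are units modulo $N$; let $\bar 4$ denote the inverse of $4$. The linear change of variables $u = 2x+y$, $v = y$ has determinant $2$ and hence is a bijection of $(\bfZ/N\bfZ)^2$, so with $b := a\bar 4$ (a unit mod $N$) the sum factors as
\[
\sum_{\gamma \in \Oo/N\Oo} e\!\left[\frac{a|\gamma|^2}{N}\right] = \sum_{u,v \bmod N} e\!\left[\frac{b(u^2 + Dv^2)}{N}\right] = g(b)\,g(bD),
\]
where $g(c) := \sum_{t \bmod N} e[ct^2/N]$ is the standard quadratic Gauss sum. Here $\gcd(bD,N)=1$ since $\gcd(D,N) = \gcd(b,N) = 1$, so both factors are nondegenerate.

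Finally I would invoke the classical evaluation of the quadratic Gauss sum for odd $N$: for $\gcd(c,N)=1$ one has $g(c) = \left(\frac{c}{N}\right)\varepsilon_N\sqrt N$, where $\left(\frac{\cdot}{N}\right)$ is the Jacobi symbol and $\varepsilon_N = 1$ or $i$ according as $N \equiv 1$ or $3 \pmod 4$. Multiplying, $g(b)g(bD) = \left(\frac{b}{N}\right)\left(\frac{bD}{N}\right)\varepsilon_N^2\, N = \left(\frac{D}{N}\right)\left(\frac{-1}{N}\right)N$, using $\left(\frac{b^2}{N}\right)=1$ and $\varepsilon_N^2 = \left(\frac{-1}{N}\right)$. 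Since $\left(\frac{D}{N}\right)\left(\frac{-1}{N}\right) = \left(\frac{-D}{N}\right) = \chi_K(N)$ (the Kronecker character of discriminant $-D$, which on the odd integer $N$ coprime to $D$ is exactly this Jacobi symbol), dividing by $N$ yields the claim.

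The only delicate point is the bookkeeping of the root-of-unity factor $\varepsilon_N$ and its square; everything else is a bijective change of variables together with the separability $g(c) = \left(\frac{c}{N}\right)g(1)$ of Gauss sums. One should also confirm the identification $\chi_K = \left(\frac{-D}{\cdot}\right)$ against the paper's normalization of the quadratic character attached to $K/\bfQ$, and note that the standing hypothesis $D$ odd is what forces $D\equiv 3 \pmod 4$ and hence the clean form of $\Oo$ and $Q$.
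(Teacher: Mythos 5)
Your proof is correct, but it cannot be compared step\hyp{}for\hyp{}step with ``the paper's proof'' because the paper gives none: the lemma is imported verbatim from Haverkamp's thesis (cited as Lemma~0.4), which is the reason the authors thank Olav Richter for supplying a copy of that thesis. Your argument therefore supplies a self-contained replacement for the citation, and each step checks out: $D$ odd and $-D$ a fundamental discriminant force $D\equiv 3\pmod{4}$, so $\Oo=\bfZ\bigl[\tfrac{1+\sqrt{-D}}{2}\bigr]$ and the norm form $Q(x,y)=x^2+xy+\tfrac{1+D}{4}y^2$ has discriminant $-D$; the identity $4Q(x,y)=(2x+y)^2+Dy^2$ plus the invertibility of $2$ modulo the odd $N$ converts the sum over $\Oo/N\Oo\cong(\bfZ/N\bfZ)^2$ into the product $g(b)\,g(bD)$ of two one-variable quadratic Gauss sums, with $b\equiv a\cdot 4^{-1}\pmod{N}$; and the classical evaluation $g(c)=\left(\frac{c}{N}\right)\varepsilon_N\sqrt{N}$ (valid for \emph{all} odd $N$, composite included --- this is the one nontrivial external input) yields $\left(\frac{b^2D}{N}\right)\left(\frac{-1}{N}\right)N=\left(\frac{-D}{N}\right)N=\chi_K(N)\,N$, as required. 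Two bookkeeping remarks: first, your evaluation tacitly assumes $N>0$ (through $\sqrt{N}$ and $\varepsilon_N$); the statement allows any odd integer, but the identity persists for $N<0$ because replacing $N$ by $|N|$ conjugates the summands while $\tfrac1N$ and $\chi_K(N)$ each change sign ($\chi_K(-1)=-1$ since $-D<0$), and in any case the paper only invokes the lemma for positive moduli. Second, the identification $\chi_K=\left(\frac{-D}{\cdot}\right)$ on integers prime to $2D$ is indeed the paper's normalization of the character attached to $K/\bfQ$ (e.g.\ it is consistent with $\chi_K(p)=1$ for $p$ split, as assumed in Proposition~\ref{general}). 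In sum, where the paper black-boxes the computation, your route makes Section~\ref{Some transformation properties of the theta function} self-contained at the cost of quoting the Gauss-sum evaluation for composite odd modulus --- precisely the content that Haverkamp's Lemma~0.4 encapsulates.
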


To relate the Jacobi modular forms to elliptic modular forms we make the following definition:
\begin{definition} \label{def3.4}
For $N \in \bfZ_{\geq 0}$ let $$M^+_{k-1}(DN,\chi_K):= \{f =\sum_n a_n(f) q^n \in M_{k-1}(DN, \chi_K) \mid a_n=0 \hs \textup{whenever} \hs \chi_K(n)=1\}.$$
\end{definition}

At  this point we can now generalize 
\cite{Krieg91} Proposition 4 and \cite{Haverkamp95} Proposition 5.6 to general odd level $N$:

Assume that $\textup{gcd}(D,N)=1$. Then there exist $w,y \in \bfZ$ such that $Dw-Ny=1$, and we define $$W_D:=\bmat  D&y\\DN&Dw\emat =\bmat  1&0\\N&1 \emat \bmat  D&y\\0&1 \emat .$$

\begin{prop} \label{prop3.5}
Let $D$ 
be prime. 
Assume $N$ is odd. Then the mapping $J_{k,1}(N) \to M_{k-1}^+(DN,\chi_K)$ given by
$$\varphi \mapsto f:=f_0|_{k-1}W_D$$
is an injective homomorphism. The Fourier coefficients of $f$ satisfy \begin{equation} \label{Fcoeff2} a_{\ell}(f)=i \frac{a_D(\ell)}{\sqrt{D}} \chi_K(N)  \alpha_{\varphi}^*(\ell),\end{equation} 
where $$a_D(\ell)=\#\{u \in \mD^{-1}/\Oo| DN(u) \equiv -\ell \mod{D}\}.$$ 
\end{prop}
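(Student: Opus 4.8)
The map is evidently $\bfC$-linear, so the substance of the statement is fourfold: that $f=f_0|_{k-1}W_D$ is a \emph{holomorphic} modular form of weight $k-1$, level $DN$ and character $\chi_K$; that it lies in the plus-space; the coefficient formula \eqref{Fcoeff2}; and injectivity. The plan is to establish the modularity and the coefficient formula directly, and then to observe that both the plus-space membership and the injectivity follow formally from \eqref{Fcoeff2}.

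For the transformation law I would argue by conjugation. Write $W_D=\bmat 1&0\\N&1\emat\bmat D&y\\0&1\emat$, with $\det W_D=D$, and for $M=\bmat a&b\\c&d\emat\in\Gamma_0(DN)$ set $M':=W_DMW_D^{-1}$. Using $DN\mid c$ together with $Dw-Ny=1$, a direct matrix computation shows that $M'\in\SL_2(\bfZ)$ has lower-left entry divisible by $DN$, so $M'\in\Gamma_0(DN)$, and that its lower-right entry is $\equiv a\pmod D$, whence $\chi_K(d')=\chi_K(a)=\chi_K(d)$ since $ad\equiv1\pmod D$ and $\chi_K$ is quadratic. Since the weight $k-1$ slash for $\GL_2(\bfR)^+$ is a genuine right action (the automorphy factor $c\tau+d$ satisfies the cocycle identity irrespective of determinant), part (iv) of Lemma \ref{trafo} gives $f|_{k-1}M=f_0|_{k-1}(W_DM)=f_0|_{k-1}(M'W_D)=\chi_K(d)\,f_0|_{k-1}W_D=\chi_K(d)f$. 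Holomorphy of $f$ on $\bfH$ is clear, and holomorphy at the cusps is inherited from the holomorphy of $\varphi$ at the cusp of the Jacobi group exactly as in \cite{Krieg91} and \cite{Haverkamp95}; hence $f\in M_{k-1}(DN,\chi_K)$.

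To compute the Fourier coefficients I would factor the slash as $f=\bigl(f_0|_{k-1}\bmat 1&0\\N&1\emat\bigr)|_{k-1}\bmat D&y\\0&1\emat$. Part (iii) of Lemma \ref{trafo} rewrites the inner slash as $\tfrac1D\sum_{u,v\in\mD^{-1}/\Oo}e[NN(v)+u\ov v+\ov u v]f_u$, and applying the upper-triangular matrix then sends $\tau\mapsto D\tau+y$ in each $f_u$. Collecting the coefficient of $e[\ell\tau]$ and using \eqref{fut}, the contribution is $\alpha_\varphi^*(\ell)$ times a sum, over those $u$ with $DN(u)\equiv-\ell\pmod D$, of the quadratic Gauss sum $\sum_v e[\cdots]$. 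Completing the square (legitimate since $\gcd(D,N)=1$ makes $N$ invertible modulo $D$) reduces this to the Gauss sum of Lemma \ref{gauss}, equivalently the classical quadratic Gauss sum attached to the prime $D\equiv3\pmod4$; the latter supplies the factor $i\sqrt D$ and, via reciprocity, the factor $\chi_K(N)$. The number of admissible $u$ is precisely $a_D(\ell)$, and after cancelling the $1/D$ one lands on \eqref{Fcoeff2}.

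Both remaining points are then immediate. Writing $u=\alpha/\sqrt{-D}$ with $\alpha\in\Oo$ one has $DN(u)=N(\alpha)$, and since $\Oo/\sqrt{-D}\,\Oo\cong\bfZ/D\bfZ$ the values $N(\alpha)\bmod D$ are exactly the squares modulo $D$; thus $a_D(\ell)\neq0$ forces $-\ell$ to be a square mod $D$, and because $D\equiv3\pmod4$ gives $\chi_K(-1)=-1$, this is equivalent to $\chi_K(\ell)\neq1$. Hence $a_\ell(f)=0$ whenever $\chi_K(\ell)=1$, i.e. $f\in M_{k-1}^+(DN,\chi_K)$ in the sense of Definition \ref{def3.4}. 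For injectivity, if $f=0$ then \eqref{Fcoeff2} forces $\alpha_\varphi^*(\ell)=0$ for every $\ell$ with $a_D(\ell)\neq0$, which is the entire support of $\alpha_\varphi^*$; since $\alpha_\varphi^*$ determines all the coefficients $c_\varphi(m,u)=\alpha_\varphi^*(D(m-N(u)))$, it determines $\varphi$, so $\varphi=0$. The main obstacle is the Gauss-sum evaluation of the third paragraph: extracting the exact normalization $i\chi_K(N)/\sqrt D$ and the multiplicity $a_D(\ell)$ is where Lemma \ref{gauss}, the hypotheses $D$ prime, $N$ odd, $\gcd(D,N)=1$, and the identity $Dw-Ny=1$ are all genuinely used.
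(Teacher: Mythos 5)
Your proposal is correct, and while its overall skeleton (modularity, then \eqref{Fcoeff2}, then plus-space and injectivity as formal consequences) matches the paper's, both computational steps take genuinely different routes. For modularity, the paper notes $f_0\in M_{k-1}(DN,\chi_K)$ via Lemma \ref{trafo}(iv) and the growth argument of \cite{Haverkamp95}, and then simply cites Proposition 1.1 of \cite{AtkinLi78} for the fact that $W_D$ preserves this space; your conjugation computation is exactly the proof behind that citation, inlined, and it checks out: $W_DMW_D^{-1}\in\Gamma_0(DN)$ with lower-right entry $\equiv -Nay\equiv a\pmod D$ by $Dw-Ny=1$, and $\chi_K(a)=\chi_K(d)$ since $ad\equiv 1\pmod D$. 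For the coefficients, the paper factors $W_D=\bmat 1&y\\N&Dw\emat \bmat D&0\\0&1\emat$ and computes the theta transformation under the first factor Shintani-style, where the cross terms die by integrality and the mod-$N$ Gauss sum of Lemma \ref{gauss} yields $\chi_K(N)$; the diagonal factor then only rescales $\tau\mapsto D\tau$, so no phases arise. You instead factor $W_D=\bmat 1&0\\N&1\emat \bmat D&y\\0&1\emat$, quote the already-proven Lemma \ref{trafo}(iii), and are left with the \emph{classical} quadratic Gauss sum modulo $D$, namely $\sum_{v\in\mD^{-1}/\Oo}e[NN(v)]=\chi_K(N)\,i\sqrt{D}$. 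Two remarks on this. First, this is not ``the Gauss sum of Lemma \ref{gauss}'' (that sum runs over $\Oo/N\Oo$ with modulus $N$); it is a mod-$D$ sum, correctly evaluated as you indicate via $N(\alpha)\equiv\alpha^2\pmod{\sqrt{-D}}$ and $\left(\tfrac{N}{D}\right)=\chi_K(N)$, so on your route Lemma \ref{gauss} is in fact never used. Second, your factorization incurs two phases that the paper's avoids: $e[\ell y/D]$ from the substitution $\tau\mapsto D\tau+y$, and $e[-NN(u'')]$ (writing $u=Nu''$) from completing the square. These do cancel, which is why \eqref{Fcoeff2} comes out clean: for $\ell\equiv -DN(u)\pmod D$ one has $e[-NN(u'')]=e[\ell N^*/D]$ where $NN^*\equiv 1\pmod D$, while $y\equiv -N^*\pmod D$; this cancellation, which you left implicit, is precisely where $Dw-Ny=1$ enters your computation. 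Finally, you prove $a_D(\ell)=0\Leftrightarrow\chi_K(\ell)=1$ directly from norms being squares mod $D$, whereas the paper cites Krieg's exact count $a_D(\ell)=1-\chi_K(\ell)$; the injectivity arguments are identical. Net comparison: your proof is more self-contained (no \cite{AtkinLi78}, no \cite{Krieg91} 4(5), no Lemma \ref{gauss}), at the cost of phase bookkeeping that the paper's choice of factorization eliminates.
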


\begin{rem}
For $N=1$ \cite{Krieg91} section 6 also proves surjectivity of the map. We show in the proof of Theorem \ref{lift} that for $N=p$ prime the map is surjective onto the space of $p$-old cuspforms in $M_{k-1}^+(Dp,\chi_K)$.

\end{rem}

\begin{proof} [Proof of Proposition \ref{prop3.5}]
 Using the argument from \cite{Haverkamp95} Satz 5.3 proving moderate growth and Lemma \ref{trafo} (and the fact that the involution induced by $W_D$ preserves $M_{k-1}(DN, \chi_K)$ by Proposition 1.1 in \cite{AtkinLi78}) we can conclude
 that $F \in M_{k-1}(DN, \chi_K)$.

We now calculate the Fourier expansion of $F$. Note that $$W_D:=\bmat  D&y\\DN&Dw\emat =\bmat  1&0\\N&1 \emat \bmat  D&y\\0&1 \emat =\bmat  1&0\\N&1 \emat \bmat  1&y\\0&1 \emat \bmat  D&0\\0&1 \emat .$$ We first need to work out the effect of $$\sigma=\bmat  1&0\\N&1 \emat \bmat  1&y\\0&1 \emat =\bmat  1&y\\N&Ny+1 \emat =\bmat  1&y\\N&Dw \emat $$ on $\vartheta_0$: For this we use similar ideas to those in Shintani's proof of \cite{Shintani75} Proposition 1.6(ii):
Since $\textup{gcd}(N,D)=1$ note that $v \mapsto vN$
 induces an automorphism of $\mD^{-1}/\Oo$. Applying this change of variable we get
$$M_{0,vN}(\sigma)=\frac{-i}{\sqrt{D}N} \sum_{\gamma \in  \Oo/N\Oo} e\left[ \frac{|\gamma|^2}{N}\right]e[-\gamma \ov{v}- \ov{\gamma}v + wND |v|^2].$$
Since $v \in \mD^{-1}$ we have $e[-\gamma \ov{v}- \ov{\gamma}v] =1$, 
and since $\sqrt{D} \mD^{-1} \subset \Oo$ we also have $e[wD |v|^2]=1$.
By Lemma \ref{gauss}
 we get $$\vartheta_0|[\sigma]_1=\chi_K(N) \frac{-i}{\sqrt{D}}\sum_{v \in \mD^{-1}/\Oo} \vartheta_v,$$ which implies that $$f_0|_{k-1}\sigma=\chi_K(N) \frac{i}{\sqrt{D}}\sum_{v \in \mD^{-1}/\Oo} f_v,$$ and so $$f(\tau)=f_0|_{k-1}W_D(\tau)=\chi_K(N) \frac{i}{\sqrt{D}}\sum_{v \in \mD^{-1}/\Oo} f_v(D\tau).$$ 
This implies the formula \eqref{Fcoeff2} for the Fourier coefficients of $f$.
As we know from \cite{Krieg91} 4 (5) on p. 670 that $a_D(\ell)=1+ \chi_K(-\ell)=1-\chi_K(\ell)$ 
we conclude from this that $f \in M_{k-1}^+(DN, \chi_K)$.

As $\varphi \mapsto f$ is a linear map of vector spaces it suffices for the injectivity to check that the kernel is trivial. 
 Suppose that $f=f_0|_{k-1}W_D=0$. By (\ref{Fcoeff2}) this means that  $\alpha_{\varphi}^*(\ell)=0$ whenever $a_D(\ell) \neq 0$. By the definition of $ a_D(\ell)$ these are the only $\alpha_{\varphi}^*(\ell)$ used in the definition of the $f_u$, so the Jacobi form $\varphi=0$. 
\end{proof}

\subsection{Key technical result in this section}

\begin{prop} \label{general}
Assume that $D$ is prime  (which implies $D \equiv -1 \mod{4}$), 
$p>2$ a split prime in $K/\bfQ$. 
Note 
 that this is equivalent to  $\chi_K(p)=
1.$ 
Let $\pi \in \Oo$ with $N(\pi)=p$. 
Then $$M_{\pi u,\pi v}(\sigma)=M_{u,v}(\bmat  p&0\\0&1\emat \sigma \bmat  p^{-1}&0\\0&1\emat ) $$ for all $\sigma \in \Gamma_0(p)$.
\end{prop}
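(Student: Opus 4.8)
The plan is to reduce the statement to the explicit formula of Lemma~\ref{trafo1}. Writing $\sigma=\mat{a}{b}{c}{d}\in\Gamma_0(p)$ with $c=pc'$, one computes
\[
\mat{p}{0}{0}{1}\sigma\mat{p^{-1}}{0}{0}{1}=\mat{a}{pb}{c'}{d}=:\sigma',
\]
and $\det\sigma'=ad-pbc'=ad-bc=1$, so $\sigma'\in\SL_2(\bfZ)$ and Lemma~\ref{trafo1} applies to both $\sigma$ and $\sigma'$. I would record two elementary facts: since $p$ is split it is unramified, so $p\neq D$ and hence $\pi\notin(\sqrt{-D})$; thus multiplication by $\pi$ is a bijection of $\mD^{-1}/\Oo\cong\Oo/(\sqrt{-D})$. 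Moreover $N(\pi x)=pN(x)$ and $\Tr(\pi x\,\ov{\pi y})=p\,\Tr(x\ov y)$ for all $x,y$. The case $c=0$ is then immediate: here $a=d=\pm1$ and $c'=0$, so $M_{\pi u,\pi v}(\sigma)=\mathrm{sign}(a)\,\delta_{\pi u,a\pi v}\,e[ab\,N(\pi u)]$ while $M_{u,v}(\sigma')=\mathrm{sign}(a)\,\delta_{u,av}\,e[a(pb)N(u)]$; these agree because $N(\pi u)=pN(u)$ and $\delta_{\pi u,a\pi v}=\delta_{u,av}$ (using $a\pi v=\pi(av)$ and bijectivity).

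For $c=pc'\neq0$ the claim becomes a Gauss-sum identity. Setting $Q(\gamma):=aN(\gamma)-\Tr(\gamma\ov{\pi v})+dN(\pi v)$, the two sides read
\[
M_{\pi u,\pi v}(\sigma)=\tfrac{-i}{pc'\sqrt D}\!\!\sum_{\gamma\in(\pi u+\Oo)/pc'\Oo}\!\!e[Q(\gamma)/(pc')],\qquad
M_{u,v}(\sigma')=\tfrac{-i}{c'\sqrt D}\!\!\sum_{\delta\in(u+\Oo)/c'\Oo}\!\!e\!\left[\tfrac{aN(\delta)-\Tr(\delta\ov v)+dN(v)}{c'}\right],
\]
so it suffices to prove that the first Gauss sum equals $p$ times the second. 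I would split the $\gamma$-sum according to the class of $\gamma-\pi u$ in $\Oo/\pi\Oo$ and treat the principal class $\gamma\in\pi u+\pi\Oo$ first: substituting $\gamma=\pi\delta$ (a bijection of $(u+\Oo)/\ov\pi c'\Oo$ onto $(\pi u+\pi\Oo)/pc'\Oo$, since $pc'\Oo=\pi\ov\pi c'\Oo$), the two elementary identities collapse $Q(\pi\delta)/(pc')$ to $\big(aN(\delta)-\Tr(\delta\ov v)+dN(v)\big)/c'$. This summand depends only on $\delta\bmod c'\Oo$ (because $\Tr(\mD^{-1}\Oo)\subseteq\bfZ$), and the reduction $(u+\Oo)/\ov\pi c'\Oo\twoheadrightarrow(u+\Oo)/c'\Oo$ is $p$-to-$1$; hence the principal class contributes exactly $p$ times the right-hand Gauss sum, i.e.\ precisely $M_{u,v}(\sigma')$.

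The remaining step — showing that the non-principal classes contribute $0$ — is the main obstacle, and a simple index count (the left sum has $(pc')^2$ terms but must reduce to the $p(c')^2$ effective contributions above) shows cancellation is intrinsic and cannot be produced by any bijection. I would organize this via the translations $\gamma\mapsto\gamma+\ov\pi c'\lambda$, $\lambda\in\Oo/\pi\Oo$, a group of order $p$: expanding $Q$ shows each orbit sum factors through the additive character $\lambda\mapsto e[\Tr(\pi(a\gamma-\pi v)\ov\lambda)/p]$, which by orthogonality is $p$ or $0$ according to whether $a\gamma\equiv\pi v\pmod{\ov\pi\mD^{-1}}$. The work is then to evaluate the resulting quadratic Gauss sums on the non-principal classes and check they vanish; this is exactly where the splitting of $p$ is essential (so that $\pi,\ov\pi$ are coprime and $\Oo/\ov\pi\cong\mathbf F_p$), in the spirit of Shintani's computation in \cite{Shintani75} Proposition~1.6(ii) together with the Gauss-sum evaluation of Lemma~\ref{gauss}. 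I expect the careful tracking of how the dual lattice $\mD^{-1}$ interacts with the prime $\ov\pi$ to be the most delicate bookkeeping. (As an alternative that would localize this difficulty, one may note that both $\sigma\mapsto M_{\pi u,\pi v}(\sigma)$ and $\sigma\mapsto M_{u,v}(\sigma')$ are compatible with the theta cocycle, so it is enough to verify the identity on a generating set of $\Gamma_0(p)$, reducing the hard case to a single generator with $c\neq0$.)
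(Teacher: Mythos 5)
Your reduction to a Gauss-sum identity, the case $c=0$, and the principal-class computation via $\gamma=\pi\delta$ are all correct, as is your closing observation that $\sigma\mapsto M(\sigma)$ is multiplicative so that it suffices to check generators of $\Gamma_0(p)$. But the proposition \emph{is} the cancellation statement you defer, and the mechanism you propose for it cannot be run as stated. With translations $\gamma\mapsto\gamma+\ov{\pi}c'\lambda$, the character you compute, $\lambda\mapsto e[\Tr(\pi\ov{\lambda}(a\gamma-\pi v))/p]$, is trivial precisely when $a\gamma\equiv\pi v\pmod{\ov{\pi}\mD^{-1}}$ --- a congruence at the prime $\ov{\pi}$ --- whereas your principal/non-principal decomposition is a congruence at $\pi$ (namely $\gamma\equiv\pi u\bmod \pi\Oo$). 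Because $p$ splits, these two conditions are independent; concretely, when $p\nmid c'$ the element $\ov{\pi}c'$ is invertible mod $\pi$, so each translation orbit meets \emph{every} class mod $\pi\Oo$ exactly once, and ``orbit sums on the non-principal classes'' is not even a well-defined quantity. The repair is to translate by $\pi c'\lambda$ instead: then orbits stay inside the mod-$\pi$ classes, the $v$-term becomes integral and drops out, and the character becomes $\lambda\mapsto e[a\Tr(\ov{\pi}\gamma\ov{\lambda})/p]$ on $\Oo/\ov{\pi}\Oo$, which (using $p\nmid a$ and $\gcd(\pi,\sqrt{-D})=1$) is trivial exactly when $\gamma\in\pi\mD^{-1}$, i.e.\ exactly on the principal class; orthogonality then kills every non-principal orbit, and your principal-class bijection finishes the proof uniformly in $c'\neq 0$. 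As written, however, your proposal both picks the wrong twist ($\ov{\pi}$ instead of $\pi$) and explicitly leaves the vanishing unverified, so it has a genuine gap precisely at the step that constitutes the proposition.

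For comparison, the paper takes a different and complete route: it completes the square to reduce to the shifted Gauss-sum identity \eqref{claimgs}; Lemma \ref{37} (an explicit $\bfZ$-basis computation in $\mD^{-1}$, valid when the modulus is prime to $aD$) shows the shift can be dropped; this reduces everything to $\frac{1}{pc}G_{-D}(a,pc)=\frac{1}{c}G_{-D}(a,c)$, proved via multiplicativity of $G_{-D}$, the evaluation of Lemma \ref{gauss} for odd moduli (this is where $\chi_K(p)=1$ enters), and the substitution $\gamma\mapsto\pi\gamma$ for the $2$-power part; finally the case $D\mid c$ is handled separately by the simplified formula of Lemma \ref{trafo1}, since the shift-removal lemma needs $\gcd(c,D)=1$. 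Your approach, once repaired as above, would avoid both the shift-removal lemma and the $D\mid c$ case split, which is a genuine advantage --- but the version you submitted does not yet prove the key step.
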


\begin{proof}
Let $\sigma=\bmat  a&b\\pc&d\emat $. Note that $$\bmat  p&0\\0&1\emat \sigma \bmat  p^{-1}&0\\0&1\emat =\bmat  a&pb\\c&d\emat .$$
We consider three different cases:

\begin{enumerate}
\item $c=0$
\item $(c,D)=1$
\item $c>0$ and $D \mid c$
\end{enumerate}

Case (1.) is straightforward:
 Lemma \ref{trafo1} tells us that $$M_{\pi u, \pi v}(\sigma)={\rm sign}(a) \delta_{\pi u, \pi av} e[ab |\pi u|^2].$$ This is clearly equal to $M_{u,v}(\bmat  p&0\\0&1\emat \sigma \bmat  p^{-1}&0\\0&1\emat )={\rm sign}(a) \delta_{u, av} e[apb | u|^2]$.

For Case (2.) we calculate by 
Lemma \ref{trafo1} that \begin{eqnarray*} M_{\pi u, \pi v}(\sigma)&=&\frac{-i}{\sqrt{D}} \frac{1}{pc} \sum_{\gamma \in \pi u + \Oo/pc\Oo} e\left[\frac{1}{pc}(a |\gamma|^2-\gamma \ov{\pi v}-\ov{\gamma}\pi v+dp|v|^2)\right]\\
&=&\frac{-i}{\sqrt{D}} \frac{1}{pc} \sum_{\gamma \in \Oo/pc\Oo} e\left[\frac{1}{pc}(a |\gamma+\pi u|^2-(\gamma+\pi u) \ov{\pi v}-\ov{(\gamma+\pi u)}\pi v+dp|v|^2)\right]\\ 
&=&\frac{-i}{\sqrt{D}} \frac{1}{pc} \sum_{\gamma \in \Oo/pc\Oo} e\left[\frac{1}{pc}(a \left|\gamma+\pi u-\frac{\ov{\pi v}}{a}\right|^2+(d-\frac{1}{a})p|v|^2)\right]\\ 
&=&\frac{-i}{\sqrt{D}} e[\frac{1}{c}(d-\frac{1}{a})|v|^2)] \cdot \frac{1}{pc} \sum_{\gamma \in \Oo/pc\Oo} e\left[\frac{a}{pc} \left|\gamma+\left( \pi u-\frac{\ov{\pi v}}{a}\right)\right|^2\right].\end{eqnarray*}

On the other hand, a similar calculation shows that  $$M_{u,v}(\bmat  a&pb\\c&d\emat )=\frac{-i}{\sqrt{D}} e[\frac{1}{c}(d-\frac{1}{a})|v|^2)] \cdot \frac{1}{c} \sum_{\gamma \in \Oo/c\Oo} e\left[\frac{a}{c}\left|\gamma+\left( u-\frac{\ov{v}}{a}\right)\right|^2\right].$$

So these would be equal if we can show that \begin{equation} \label{claimgs} \frac{1}{pc} \sum_{\gamma \in \Oo/pc\Oo} e\left[\frac{a}{pc}\left|\gamma+\pi u-\frac{\ov{\pi v}}{a}\right|^2\right]=\frac{1}{c} \sum_{\gamma \in \Oo/c\Oo} e\left[\frac{a}{c}\left|\gamma+\left( u-\frac{\ov{v}}{a}\right)\right|^2\right]. \end{equation}

\begin{lemma}\label{37} For $D \equiv -1 \mod{4}$, $u, v \in \mD^{-1}$, $\textup{gcd}(N, aD)=1$ we have
$$\frac{1}{N} \sum_{\gamma \in \Oo/N\Oo} e\left[\frac{a}{N}\left|\gamma+\left( u-\frac{\ov{v}}{a}\right)\right|^2\right]=\frac{1}{N} \sum_{\gamma \in \Oo/N\Oo} e\left[\frac{a}{N}\left|\gamma\right|^2\right].$$
\end{lemma}

\begin{proof}
We first note that we can work modulo $N$ in the argument of $e\left[\frac{1}{N}\cdot\right]$.
 Since $\textup{gcd}(a,N)=1$ there exists $a^* \in \bfZ$ such that $a a^*\equiv 1 \mod{N}$, so $$\frac{\ov{v}}{a}= a a^* \frac{\ov{v}}{a} \equiv a^* \ov{v} \mod{N}.$$ It therefore suffices to prove the statement for a general element $u \in \mD^{-1}$ and $v=0$. We take the $\bfZ$-bases of $\Oo$ and $\mD^{-1}$ as follows:
 $$\Oo=\bfZ+\frac{1}{2}(1+\sqrt{-D}) \bfZ$$ and $$\mD^{-1}=\frac{i}{\sqrt{D}}\bfZ+\frac{1}{2}(1+\frac{i}{\sqrt{D}}) \bfZ.$$ Writing $u=u_1 \frac{i}{\sqrt{D}}+\frac{u_2}{2}(1+\frac{i}{\sqrt{D}})$ for $u_1, u_2 \in \bfZ$ we calculate that 
$$u\equiv u_1 \sqrt{-D} D^* +\frac{u_2}{2} (1+\sqrt{-D}D^*)\mod{N},$$ where $DD^* \equiv 1 \mod{N}$. Reordering terms we see that $$u \equiv \frac{1}{2}(1+\sqrt{-D})(u_2 D^*+2 u_1 D^*)+\frac{u_2}{2} (1-D^*)-u_1D^* \mod{N}.$$ If $N$ is odd then $2$ is invertible mod $N$, so we see that $u$ is  equivalent modulo $N$ to an element of $\Oo$. If $N$ is even then $DD^* \equiv 1 \mod{N}$ shows (together with $D$ odd) that $D^*$ is odd and we can make the same conclusion.  
 By a change of variable the sum is therefore equal to the right hand side of the statement of the Lemma.
\end{proof}

Following \cite{Haverkamp95} let us call the right hand side in the Lemma \ref{37} 
$\frac{1}{N}G_{-D}(a,N)$.
 Since  $ad-pbc=1$ ensures that $\textup{gcd} (a,pc)=1$ the Lemma shows that \eqref{claimgs} is equivalent to \begin{equation} \label{claimgs2} \frac{1}{pc} G_{-D}(a,pc)=\frac{1}{c} G_{-D}(a,c).\end{equation} For odd $c$ Lemma \ref{gauss} shows that both sides are equals to $\chi_K(c) $ (recall 
 that $p>2$ and $\chi_K(p)=1$ by assumption).

For general $c$ we argue as follows:
For $\textup{gcd} (c_1,c_2)=1$ it is easy to see that $$G_{-D}(a,c_1 c_2)=G_{-D}(ac_1, c_2) G_{-D}(ac_2, c_1).$$
By factoring $c=2^{e_2} p^{e_p} q$ with $(q,2p)=1$ we can rewrite \eqref{claimgs2}  as 
$$\frac{1}{pc}G_{-D}(2^{e_2} a, p^{e_p+1}q)G_{-D}(ap^{e_p+1}q,2^{e_2})=\frac{1}{c}G_{-D}(2^{e_2}a, p^{e_p}q) G_{-D}(ap^{e_p}q, 2^{e_2}).$$
Applying Lemma \ref{gauss} for the Gauss sums with odd second argument we see that we have reduced \eqref{claimgs2} to 
$$G_{-D}(ap^{e_p+1}q,2^{e_2})=G_{-D}(ap^{e_p}q,2^{e_2}).$$ This equality is true since $$G_{-D}(\bullet p,2^{e_2})=\sum_{\gamma \in \Oo/2^{e_2} \Oo} e[\frac{\bullet p}{2^{e_2}}|\gamma|^2]=\sum_{\gamma \in \Oo/2^{e_2} \Oo} e[\frac{\bullet}{2^{e_2}}|\pi \gamma|^2]=\sum_{\gamma \in \Oo/2^{e_2} \Oo} e[\frac{\bullet}{2^{e_2}}|\gamma|^2]$$ since $p>2$ and $|\pi|^2=p$.

For case (3.) we refer to the final part of Lemma \ref{trafo1}:
For $D \mid c$ the only terms in the expression for $M_{u,v}(\sigma)$ involving $b, c$ or $u$ are $\delta_{u, dv}$ and $ab |u|^2$. So when $b$ changes to $pb$ (and  $\pi u$ to $u$, and $\pi v$ to $v$)  the expressions for $M_{\pi u,\pi v}(\sigma)$ and $M_{u,v}(\bmat  p&0\\0&1\emat \sigma \bmat  p^{-1}&0\\0&1\emat )$ are equal.
\end{proof}

\section{Maass lift of $p$-old plusforms}\label{Maass lift of p-old plusforms}
Assume that $D=D_K$ is prime and $p$ is split in $K/\bfQ$. Set $S_{k-1}^+(Dp, \chi_K)$ to be the subspace of $M_{k-1}^+(Dp, \chi_K)$  (cf. Definition \ref{def3.4}) consisting of cusp forms. 
 The goal of this section is to prove the existence of a Maass lift for $p$-old forms in $S_{k-1}^+(Dp,\chi_K)$. This will allow us in section  \ref{A prime to p interpolation} to $p$-adically interpolate the Maass lift of ordinary newforms in $S_{k-1}(D, \chi_K)$. In \cite{Krieg91} Krieg defines the Maass lift for
$h \in M_{k-1}(D, \chi_K)$ by relating $h-h^c \in M_{k-1}^+(D, \chi_K)$ to a Jacobi form (thereby proving the surjectivity of the map in Proposition \ref{prop3.5} for $N=1$)
 and then invoking Theorem \ref{Jacobiisom} (again in the case $N=1$) to associate a Hermitian Maass form.

We briefly recall the key step in the construction of the Maass lift of \cite{Krieg91}:
Given $u \in \mD^{-1}$
and $g(\tau)=\sum_n a_n(g) e[\tau n] \in M_{k-1}^+(D,\chi_K)$
 define (as in \cite{Krieg91} 6(1))
 \begin{equation} \label{g_u2}  g_u(\tau)= \frac{-i \sqrt{D}}{a_D(-DN(u))}  \sum_{\substack{\ell \in \bfZ_{\geq 0}\\ -\ell \equiv DN(u) \mod{D}}} a_{\ell}(g) e[\ell \tau/D].\end{equation}
Krieg proves in the theorem in section 6 of \cite{Krieg91} that 
$$\varphi_g(\tau, z, w):=\sum_{u \in \mD^{-1}/\OK} g_u(\tau) \vartheta_u(\tau, z, w)$$ is a Jacobi form 
of weight $k$, index 1 and level ${\rm SL}_2(\bfZ)$, i.e., $\varphi_g \in J_{k,1}(1)$.

For a $p$-oldform   in $S_{k-1}^+(Dp,\chi_K)$ we will now modify Krieg's definition, but before we do so, we collect some of the properties of the forms lying in the plus-space in the following lemma.

\begin{lemma} \label{lem4.1}
\begin{enumerate}

\item  If $h \in S_{k-1}(D, \chi_K)$ is a normalized eigenform then $g:=h-h^c$  belongs to $S^+_{k-1}(D,\chi_K)$ and $g|_{k-1} \bmat  p&0\\0&1 \emat $ belongs to $S^+_{k-1}(Dp,\chi_K)$. 
\item The space $S^+_{k-1}(D,\chi_K)$ is generated by $h-h^c$ for normalised eigenforms $h \in S_{k-1}(D, \chi_K)$.
\item Any $p$-old form in $S_{k-1}^+(Dp, \chi_K)$ is of the form $\lambda g_1+\mu g_2|_{k-1}\bmat  p&0\\0&1 \emat $ for $g_1, g_2 \in  S_{k-1}^+(D, \chi_K)$ and $\lambda, \mu \in \bfC$.
\end{enumerate}
\end{lemma}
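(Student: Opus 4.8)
The plan is to read off both assertions from the $q$-expansion. Write $\iota_p g:=g|_{k-1}\bmat p&0\\0&1\emat$, so that with the paper's conventions $(\iota_p g)(\tau)=g(p\tau)=\sum_n a_n(g)q^{pn}$. Since $h$ is a normalized Hecke eigenform of nebentypus $\chi_K$ and $\chi_K$ is real, one has $\overline{a_n(h)}=\chi_K(n)a_n(h)$ for all $n$ with $\gcd(n,D)=1$; as $\overline{\chi_K}=\chi_K$, the form $h^c$ again lies in $S_{k-1}(D,\chi_K)$, so $g=h-h^c\in S_{k-1}(D,\chi_K)$. Its $n$-th coefficient is $a_n(h)-\overline{a_n(h)}=(1-\chi_K(n))a_n(h)$, which vanishes whenever $\chi_K(n)=1$; hence $g\in S^+_{k-1}(D,\chi_K)$. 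For the second assertion, $\iota_p g$ is the standard level-raising map and lands in $S_{k-1}(Dp,\chi_K)$ because $p\nmid D$; its $m$-th coefficient is $a_{m/p}(g)$ when $p\mid m$ and $0$ otherwise, and when $p\mid m$ the hypothesis $\chi_K(p)=1$ gives $\chi_K(m)=\chi_K(m/p)$, so the plus-space condition for $g$ at level $D$ transports to the plus-space condition at level $Dp$.

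\textbf{Part (2).} First I record that, since $D$ is prime and $\chi_K$ has conductor exactly $D$, the space $S_{k-1}(D,\chi_K)$ carries no oldforms and so has a basis of normalized newforms $\{h_i\}$; conjugation of Fourier coefficients permutes this basis, as each $h_i^c$ is again a normalized newform of the same level and nebentypus. Introducing the companion minus-space $S^-_{k-1}(M,\chi_K):=\{f:a_n(f)=0\ \textup{whenever}\ \chi_K(n)=-1\}$, the computation of Part (1) shows $h_i-h_i^c\in S^+_{k-1}(D,\chi_K)$ while $h_i+h_i^c\in S^-_{k-1}(D,\chi_K)$. From $h_i=\tfrac12\big((h_i+h_i^c)+(h_i-h_i^c)\big)$ the two families together span all of $S_{k-1}(D,\chi_K)$, whence $S_{k-1}(D,\chi_K)=S^+_{k-1}(D,\chi_K)+S^-_{k-1}(D,\chi_K)$. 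Finally $S^+\cap S^-$ consists of forms whose coefficients vanish unless $D\mid n$; at level $D$ all forms are new and $\chi_K$ is ramified at $D$, so such a form is $0$ and the sum is direct. Therefore the $\bfC$-span of the $h_i-h_i^c$ is exactly $S^+_{k-1}(D,\chi_K)$, which is the claim.

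\textbf{Part (3).} This is the substantive case. The $p$-old subspace of $S_{k-1}(Dp,\chi_K)$ is $S_{k-1}(D,\chi_K)\oplus \iota_pS_{k-1}(D,\chi_K)$, and by Part (2) this equals
\[
\big(S^+_{k-1}(D,\chi_K)\oplus \iota_pS^+_{k-1}(D,\chi_K)\big)\ \oplus\ \big(S^-_{k-1}(D,\chi_K)\oplus \iota_pS^-_{k-1}(D,\chi_K)\big).
\]
By the coefficient computation of Part (1) (again using $\chi_K(p)=1$) the first bracket lies in $S^+_{k-1}(Dp,\chi_K)$ and the second in $S^-_{k-1}(Dp,\chi_K)$. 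Thus, given a $p$-old $f\in S^+_{k-1}(Dp,\chi_K)$, I decompose $f=u+v$ along this splitting with $u\in S^+_{k-1}(Dp,\chi_K)$; then $v=f-u$ lies in $S^+_{k-1}(Dp,\chi_K)\cap S^-_{k-1}(Dp,\chi_K)$, i.e. $v$ is $p$-old and its coefficients vanish unless $D\mid n$. It remains to show $v=0$, and this is the main obstacle: unlike at level $D$, level $Dp$ does admit forms supported on $D\mid n$, so one cannot merely invoke newness. Instead I write $v=\gamma+\iota_p\delta$ with $\gamma,\delta\in S_{k-1}(D,\chi_K)$ and use that $\gamma,\delta$ have level prime to $p$: the coefficients $a_n(v)$ with $\gcd(n,Dp)=1$ give $a_n(\gamma)=0$ there, and since distinct newforms of level $D$ have distinct Hecke eigensystems away from $Dp$ (strong multiplicity one), this forces $\gamma=0$; feeding this back, the coefficients $a_{pm}(v)$ with $\gcd(m,Dp)=1$ give $a_m(\delta)=0$, and the same argument yields $\delta=0$. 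Hence $v=0$ and $f=u\in S^+_{k-1}(D,\chi_K)+\iota_pS^+_{k-1}(D,\chi_K)$, which is precisely the asserted form with suitable $g_1,g_2\in S^+_{k-1}(D,\chi_K)$ and $\lambda=\mu=1$. The one point requiring care throughout is the bookkeeping of the $D\mid n$ coefficients, on which the plus/minus conditions impose nothing; isolating them via the "supported on $D\mid n$" subspace and annihilating it by multiplicity one is the crux of the argument.
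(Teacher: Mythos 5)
Your proof is correct and, in parts (1) and (2), runs along the same lines as the paper's: the identity $\overline{a_n(h)}=\chi_K(n)a_n(h)$ for $\gcd(n,D)=1$ (\cite{Miyake89} (4.6.17)), the basis of newforms $h_1,\dots,h_a,h_1^c,\dots,h_a^c,h_{a+1},\dots,h_{a+b}$ permuted by conjugation, and the vanishing of any level-$D$ form supported on $D\mid n$. In part (3) the two arguments also share the same decomposition --- the paper writes $f=f_1+f_2+f_3+f_4$, and your $u$ is its $f_1+f_3$, your $v$ its $g=f_2+f_4$ --- and both reduce to showing that the minus component vanishes once its coefficients $a_n$ are known to vanish for all $\gcd(n,D)=1$. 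The genuine divergence is in how that component is killed. The paper applies \cite{Miyake89} Theorem 4.6.8(1) once, directly at level $Dp$: because the nebentypus $\chi_K$ has conductor $D$, a nonzero form in $S_{k-1}(Dp,\chi_K)$ supported on $D\mid n$ would have to arise from a form of level $p$ and character $\chi_K$ under $\tau\mapsto D\tau$, and no such space exists; hence $g=0$ immediately. Your side remark that ``level $Dp$ does admit forms supported on $D\mid n$'' is therefore false in this character situation --- it is exactly what the Miyake theorem rules out --- but the workaround you build because of it is valid: you split $v=\gamma+\delta|_{k-1}\bsmat p&0\\0&1\esmat$ along the two degeneracy maps, deduce $a_n(\gamma)=0$ and then $a_m(\delta)=0$ for indices prime to $Dp$, and kill each at level $D$ via strong multiplicity one (i.e., linear independence of the distinct Hecke eigensystems of the newforms restricted to the prime-to-$Dp$ Hecke operators). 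So the paper's ending is shorter, resting on a citable vanishing theorem at level $Dp$, while yours trades that citation for a multiplicity-one argument applied twice at level $D$; note, though, that your own disposal of $S^+_{k-1}(D,\chi_K)\cap S^-_{k-1}(D,\chi_K)$ in part (2) (``all forms are new and $\chi_K$ is ramified at $D$'') is the same Miyake input at level $D$, so the theorem is not really avoided, only its level-$Dp$ instance.
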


\begin{proof}
\begin{enumerate}

\item  This follows from the following formula \cite{Miyake89} (4.6.17): \begin{equation} \label{m4.6.17} a_n(h^c)=\ov{a_n(h)}=\chi_K(n) a_n(h) \text{ for } \textup{gcd}(n,D)=1.\end{equation}

\item As in \cite{Krieg91} p.671 we use for this that $S_{k-1}(D,\chi_K)$ has a basis of newforms $h_1, \ldots h_a, h_1^c, \ldots h_a^c, h_{a+1}, \ldots h_{a+b}$ with $h_i \neq h_i^c$ for $1 \leq i \leq a$ and $h_i=h_i^c$ for $a<i\leq a+b$. This implies the statement by \cite{Miyake89} Theorem 4.6.8(1), similar to the following argument for the $p$-old plusforms.
\item
Let $f \in S^+_{k-1}(Dp, \chi_K)^{p-{\rm old}}$. Then $f=f_1+f_2+f_3+f_4$, where $f_1\in \bigoplus_{i=1}^a \bfC(h_i-h_i^c)$, $f_2\in \bigoplus_{i=1}^{a+b} \bfC(h_i+h_i^c)$, $f_3\in \bigoplus_{i=1}^a\bfC (h_i-h_i^c)|_{k-1}\bsmat p \\ &1 \esmat$, $f_4 \in \bigoplus_{i=1}^{a+b}\bfC(h_i+h_i^c)|_{k-1}\bsmat p \\ &1\esmat$. 
Set $g=f_2+f_4$. We claim that the Fourier coefficients $a_n(g)=0$ for all $\textup{gcd}(n,D)=1$. By \cite{Miyake89} Theorem 4.6.8(1) this implies that $g=0$, which proves statement (3) of the lemma.

Consider first the case when $\chi_K(n)=1$. Then $a_n(g)=0$ since $g=f-(f_1+f_3) \in S_{k-1}^+(Dp, \chi_K)$ by assumption and (1).

If $\chi_K(n)=-1$ then $a_n(f_2)=0$ since $a_n(h+h^c)=0$ for any $h \in S_{k-1}(D, \chi_K)$
by \eqref{m4.6.17}. Write $n=n'p^r$ with $\textup{gcd}(n',p)=1$ and $r \geq 0$. If $r=0$ (i.e. $\textup{gcd}(n,p)=1$) then $a_n((h+h^c)|_{k-1}\bmat p \\ &1\emat)=0$
and if $r \geq 1$ then $$a_n((h+h^c)|_{k-1}\bmat p \\ &1\emat)=a_{n'p^{r-1}}(h+h^c),$$ which is zero again by \eqref{m4.6.17} since $\chi_K(n'p^{r-1})=\chi_K(n/p)=\chi_K(n)=-1$. Since $g=f_2+f_4$ this shows that $a_n(g)=0$ whenever $\chi_K(n)=-1$, concluding the proof.
\end{enumerate}
\end{proof}

For a $p$-oldform in $S_{k-1}^+(Dp,\chi_K)$ we modify Krieg's definition as follows: 
\begin{definition} \label{piu}
For $f=\lambda g^1 + \mu g^2|_{k-1} \bmat  p&0\\0&1\emat $ with $g^1, g^2 \in  S_{k-1}^+(D,\chi_K)$, 
$\pi \in \Oo$ with $N(\pi)=p$ and $g^i_u$ as in (\ref{g_u2}) we define (making use of the fact that  multiplication by $\pi$ induces a bijection on $\mD^{-1}/\Oo$)
$$f_{\pi u}:=\lambda g^1_{\pi u}+ \mu g^2_u|_{k-1} \bmat  p&0\\0&1\emat .$$
\end{definition}

\begin{prop} \label{varphi1}
 $$\varphi_f(\tau, z, w):=\sum_{u \in \mD^{-1}/\Oo} f_u(\tau) \vartheta_u(\tau, z, w)=\lambda \varphi_{g^1}(\tau, z, w)+ \mu \sum_{u \in \mD^{-1}/\Oo} g^2_u(p \tau) \vartheta_{\pi u}(\tau, z, w)$$ is a Jacobi form of  weight $k$, index 1 and  level $p$.
\end{prop}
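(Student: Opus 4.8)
The plan is to reduce the statement to a single level-raising claim about the second summand, and then to prove that claim by transporting the $\Gamma_0(p)$-action to an $\SL_2(\bfZ)$-action through conjugation by $\bmat p&0\\0&1\emat$. First I would establish the displayed identity. By Definition \ref{piu} the coefficient attached to $v=\pi u$ is $f_{\pi u}=\lambda g^1_{\pi u}+\mu\, g^2_u|_{k-1}\bmat p&0\\0&1\emat$, and since $\bmat p&0\\0&1\emat$ acts on $\tau$ by $\tau\mapsto p\tau$ with trivial automorphy factor, one has $g^2_u|_{k-1}\bmat p&0\\0&1\emat=g^2_u(p\tau)$. As $u\mapsto\pi u$ permutes $\mD^{-1}/\Oo$, reindexing the defining sum $\varphi_f=\sum_{v}f_v\vartheta_v$ by $v=\pi u$ yields exactly $\lambda\varphi_{g^1}+\mu\sum_u g^2_u(p\tau)\vartheta_{\pi u}$. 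Since $g^1\in S^+_{k-1}(D,\chi_K)$, Krieg's construction (recalled above) gives $\varphi_{g^1}\in J_{k,1}(1)\subseteq J_{k,1}(p)$, so it remains only to prove that $\psi(\tau,z,w):=\sum_{u\in\mD^{-1}/\Oo}g^2_u(p\tau)\vartheta_{\pi u}(\tau,z,w)$ lies in $J_{k,1}(p)$.

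The main step, and the one I expect to be the crux, is the invariance $\psi|_{k,1}[\sigma]=\psi$ for $\sigma=\bmat a&b\\pc&d\emat\in\Gamma_0(p)$. Write $\sigma'=\bmat p&0\\0&1\emat\sigma\bmat p^{-1}&0\\0&1\emat=\bmat a&pb\\c&d\emat\in\SL_2(\bfZ)$. The key geometric fact is $p\cdot(\sigma\tau)=\sigma'(p\tau)$, so the weight-$(k-1)$ action of $\sigma$ on $g^2_u(p\tau)$ is governed by the action of $\sigma'$ on $g^2_u$. Because $\varphi_{g^2}=\sum_u g^2_u\vartheta_u\in J_{k,1}(1)$, the theta decomposition gives, for every element of $\SL_2(\bfZ)$ and in particular for $\sigma'$, the vector-valued transformation $g^2_u|_{k-1}\sigma'=\sum_v N_{u,v}(\sigma')g^2_v$ with $N(\sigma')$ characterized by ${}^tN(\sigma')M(\sigma')=I_D$. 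On the theta side, Lemma \ref{trafo1} gives $\vartheta_{\pi u}|[\sigma]_1=\sum_v M_{\pi u,\pi v}(\sigma)\vartheta_{\pi v}$, and Proposition \ref{general} converts this to $M_{\pi u,\pi v}(\sigma)=M_{u,v}(\sigma')$. Substituting both transformations into $\psi|_{k,1}[\sigma]$ collapses the automorphy factors (the weight $k-1$ from the $g^2_u$ and the weight $1$ from the thetas combine to $k$, and the index-$1$ exponential factors cancel), and the double sum reduces via $\sum_u N_{u,v}(\sigma')M_{u,v'}(\sigma')=({}^tN(\sigma')M(\sigma'))_{v,v'}=\delta_{v,v'}$ to $\sum_{v}g^2_v(p\tau)\vartheta_{\pi v}=\psi$. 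This is precisely the point at which Proposition \ref{general} is indispensable: it is what allows the genuinely level-$p$ object $\psi$ to inherit the duality that holds only at full level $\SL_2(\bfZ)$.

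It remains to check the remaining defining properties of $J_{k,1}(p)$, all of which are routine. For the lattice translations $[\lambda,\mu]$ with $\lambda,\mu\in\Oo$, invariance holds termwise because each $\vartheta_{\pi u}$ is itself invariant and the coefficients $g^2_u(p\tau)$ depend only on $\tau$. For $\epsilon\in\Oo^\times$ I would use $\vartheta_u(\tau,\epsilon z,\ov{\epsilon}w)=\vartheta_{\ov{\epsilon}u}(\tau,z,w)$ (from $N(\epsilon)=1$ and $\ov{\epsilon}\Oo=\Oo$) together with the relation $g^2_{\epsilon u}=g^2_u$ forced by the $\Oo^\times$-invariance of $\varphi_{g^2}\in J_{k,1}(1)$; after reindexing $u\mapsto\ov{\epsilon}u$ these combine to give $\psi(\tau,\epsilon z,\ov{\epsilon}w)=\psi(\tau,z,w)$. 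Since the $\Gamma_0(p)$-action and the $\Oo^\times$-action compose to the action of $\epsilon A$, this yields $\psi|_{k,1}[\epsilon A]=\psi$ for all $A\in\Gamma_0(p)$, $\epsilon\in\Oo^\times$. Finally, holomorphy on $\bfH\times\bfC^2$ is clear, and the required Fourier support condition $\nu N(t)\le\ell m$ at every cusp follows as in \cite{Haverkamp95} Satz 7.2 and \cite{Kojima} from the holomorphy and moderate growth of the $g^2_u$ together with the positivity built into the theta exponents. Combining the three steps gives $\psi\in J_{k,1}(p)$ and hence $\varphi_f\in J_{k,1}(p)$.
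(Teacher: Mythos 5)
Your proof is correct and follows essentially the same route as the paper's: the crux in both is Proposition \ref{general} combined with the full-level transformation law $g^2_u|_{k-1}\sigma'=\sum_v N_{u,v}(\sigma')g^2_v$ from Krieg and the duality ${}^tN(\sigma')M(\sigma')=I_D$, transported to level $p$ by conjugation with $\bmat p&0\\0&1\emat$. The difference is purely presentational: the paper verifies the equivalent coefficient transformation law $f_{\pi u}|_{k-1}\sigma=\sum_{v} N_{\pi u,\pi v}(\sigma)f_{\pi v}$ and leaves the remaining Jacobi-form axioms implicit, whereas you verify the invariance of the theta sum directly and also spell out the lattice, $\Oo^{\times}$, and cusp conditions.
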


\begin{proof}
We need to check that for all $\sigma \in \Gamma_0(p)$ we have  $$f_{\pi u}|_{k-1}\sigma=\sum_{v\in \mD^{-1}/\Oo} N_{\pi u,\pi v}(\sigma)f_{\pi v},$$ 
where the matrix $N(\sigma)=(N_{u,v}(\sigma))$ is defined in section \ref{Theta decomposition}. 
 It follows from Proposition \ref{general}  that 
$$N_{\pi u,\pi v}(\sigma)=N_{u,v}\left(\bmat  p&0\\0&1\emat \sigma \bmat  p^{-1}&0\\0&1\emat \right) $$ for all $\sigma \in \Gamma_0(p)$.
The proof of the theorem in section 6 of \cite{Krieg91}  (which can be used here because the $g^i$ are of level $D$) shows that $$g^i_u|_{k-1}\sigma=\sum_{v\in \mD^{-1}/\Oo} N_{u,v}(\sigma) g^i_v$$ for all $\sigma \in {\rm SL}_2(\bfZ)$.

Now we calculate that 
for $\sigma \in \Gamma_0(p)$
 \begin{equation*}\begin{split}f_{\pi u}|_{k-1}\sigma=& \lambda g^1_{\pi u}|_{k-1} \sigma + \mu g^2_u|_{k-1}\bmat p&0\\ 0&1 \emat \sigma\\
 =& \lambda g^1_{\pi u}|_{k-1}\sigma + \mu g^2_u|_{k-1}\left(\bmat p&0\\ 0&1 \emat \sigma \bmat p&0\\ 0&1 \emat^{-1}\right) \bmat p&0\\ 0&1 \emat\\
=&\lambda \sum_{v\in \mD^{-1}/\Oo} N_{\pi u,v}(\sigma) g^1_v + \mu \sum_{v \in \mD^{-1}/\Oo} N_{u,v}\left(\bmat  p&0\\0&1\emat \sigma \bmat  p^{-1}&0\\0&1\emat \right) g^2_v|_{k-1}\bmat  p&0\\0&1\emat \\
=&\sum_{v\in \mD^{-1}/\Oo} N_{\pi u,\pi v}(\sigma) \left(\lambda g^1_{\pi v} -  \mu g^2_v|_{k-1}\bmat  p&0\\0&1\emat \right).\end{split} \end{equation*}
\end{proof}

\begin{thm} \label{lift}
For $f \in S^+_{k-1}(Dp, \chi_K)^{p-{\rm old}}$  there exists   $F_f \in \mathcal{M}^*_k(p)$ with \be \label{Fc11} \alpha_{F_f}^*(\ell)=\sqrt{D}\frac{a_{\ell}(f)}{i a_D(\ell)}.\ee We will refer to $F_f$ as the \emph{Maass lift} of $f$. Furthermore, the assignment $f \mapsto F_f$ defines an injective  $\bfC$-linear map from $S^+_{k-1}(Dp, \chi_K)^{p-{\rm old}}$ to $\mM_{k}^*(p)$. 
\end{thm}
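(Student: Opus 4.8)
The plan is to construct $F_f$ as $I(\varphi_f)$, where $\varphi_f$ is the index-one Jacobi form attached to $f$ in Definition \ref{piu} and Proposition \ref{varphi1}, and $I$ is the isomorphism of Theorem \ref{Jacobiisom}; the formula \eqref{Fc11} will then be read off from the theta decomposition of $\varphi_f$. Concretely, I would first invoke Lemma \ref{lem4.1}(3) to write $f = \lambda g^1 + \mu\, g^2|_{k-1}\bmat p&0\\0&1\emat$ with $g^1, g^2 \in S^+_{k-1}(D,\chi_K)$ and $\lambda,\mu \in \bfC$. Proposition \ref{varphi1} then gives $\varphi_f \in J_{k,1}(p)$, and since $D$ is prime with $D \equiv -1 \pmod 4$, the Proposition preceding Theorem \ref{Jacobiisom} shows $J_{k,1}(p) = J^{\rm spez}_{k,1}(p)$; hence $\varphi_f$ is automatically special and $F_f := I(\varphi_f) \in \mathcal{M}^*_k(p)$ by Theorem \ref{Jacobiisom}, with $\alpha^*_{F_f} = \alpha^*_{\varphi_f}$ by \eqref{desc}.

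The heart of the argument is to compute $\alpha^*_{\varphi_f}$ from the decomposition $\varphi_f = \sum_u f_u \vartheta_u$: the value $\alpha^*_{\varphi_f}(\ell)$ is the coefficient of $e[\ell\tau/D]$ in any $f_v$ with $\ell \equiv -DN(v) \pmod D$ (well defined by specialness). Taking $v = \pi u$ and using $f_{\pi u} = \lambda g^1_{\pi u} + \mu\, g^2_u(p\tau)$ from Definition \ref{piu} together with the explicit expression \eqref{g_u2}, the $g^1$-term contributes $\lambda\, \tfrac{-i\sqrt D}{a_D(\ell)} a_\ell(g^1)$, while the $g^2(p\tau)$-term contributes $\mu\, \tfrac{-i\sqrt D}{a_D(-DN(u))} a_{\ell/p}(g^2)$ when $p \mid \ell$ (both read directly off \eqref{g_u2}). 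The key point is that $\chi_K(p) = 1$ forces $a_D(\ell) = 1 - \chi_K(\ell) = 1 - \chi_K(\ell/p) = a_D(-DN(u))$, so the two normalizing denominators coincide and the terms combine to $\tfrac{-i\sqrt D}{a_D(\ell)}\bigl(\lambda a_\ell(g^1) + \mu a_{\ell/p}(g^2)\bigr) = \sqrt D\, \tfrac{a_\ell(f)}{i\, a_D(\ell)}$, which is \eqref{Fc11}. Equivalently, one can apply the descent map of Proposition \ref{prop3.5} to $\varphi_f$ and verify $(\varphi_f)_0|_{k-1}W_D = f$; this route simultaneously yields the surjectivity onto $p$-old cusp forms promised in the remark after Proposition \ref{prop3.5}.

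Finally, linearity of $f \mapsto F_f$ is immediate once \eqref{Fc11} is established: the right-hand side $\sqrt D\, a_\ell(f)/(i\, a_D(\ell))$ is $\bfC$-linear in $f$ since each $f \mapsto a_\ell(f)$ is, and a form in $\mathcal{M}^*_k(p)$ is determined by its function $\alpha^*$ through the defining relation for $C_F(T)$ (and for $C_F(0_2)$); this also shows $F_f$ does not depend on the chosen decomposition of $f$. For injectivity, if $F_f = 0$ then $\alpha^*_{F_f} \equiv 0$, so $a_\ell(f) = 0$ for every $\ell$ with $a_D(\ell) = 1 - \chi_K(\ell) \neq 0$, i.e.\ whenever $\chi_K(\ell) \neq 1$; since $f$ lies in the plus-space we also have $a_\ell(f) = 0$ when $\chi_K(\ell) = 1$, and as $f$ is cuspidal this forces $f = 0$. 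I expect the main obstacle to be the coefficient computation of the second paragraph — tracking the congruence $\ell \equiv -DN(\pi u) \pmod D$ through the $\pi$-twist of Definition \ref{piu} and matching the factors $a_D(-DN(u))$ — which relies essentially on $\chi_K(p)=1$ (and, for the well-definedness of $\varphi_f$ as a Jacobi form, on Proposition \ref{general}).
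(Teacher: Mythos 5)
Your proposal is correct and follows essentially the same route as the paper's proof: decompose $f$ via Lemma \ref{lem4.1}(3), form $\varphi_f$ via Definition \ref{piu} and Proposition \ref{varphi1}, pass to $\mathcal{M}^*_k(p)$ by Theorem \ref{Jacobiisom}, and extract \eqref{Fc11} from the theta decomposition using $a_D(-DpN(u))=a_D(-DN(u))$, which is exactly the paper's key use of $\chi_K(p)=1$. The only minor difference is your injectivity argument (direct vanishing of coefficients plus the plus-space condition and cuspidality, which rests on the same Miyake-type fact invoked in Lemma \ref{lem4.1}), whereas the paper deduces injectivity from $f\mapsto\varphi_f$ being a one-sided inverse of the descent map of Proposition \ref{prop3.5} --- a route you also indicate as the ``equivalent'' alternative.
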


\begin{rem}
Due 
 partly to the absence of an old/newform theory for Hermitian forms it is non-trivial to extend the lift to $p$-oldforms. 
 In particular, for $f=\lambda g^1 + \mu g^2|_{k-1} \bmat  p&0\\0&1\emat $ our lift $F_f(Z)$ does not equal $\lambda F_{g^1}(Z) +\mu F_{g^2}(pZ)$ for the $F_{g^i}$ defined by \cite{Krieg91}.
\end{rem}

\begin{proof}
The linearity of the map follows immediately from (\ref{Fc11}) because $a_{\ell}(f)$ is linear in $f$. 
By Lemma \ref{lem4.1} (3) the form $f=\lambda g^1 + \mu g^2|_{k-1} \bmat  p&0\\0&1\emat $ with $g^1, g^2 \in  S_{k-1}^+(D,\chi_K)$. 
 Since $\varphi_f \in J_{k,1}(p)$ by Proposition \ref{varphi1}, it follows from Theorem \ref{Jacobiisom} that there exists a unique Maass form $F_f \in \mathcal{M}_k^*(p)$ corresponding to $\varphi_f$.

To prove (\ref{Fc11}) we claim that it suffices to show that $\alpha_{\varphi_f}^*(\ell)=\sqrt{D}\frac{a_{\ell}(f)}{i a_D(\ell)}$, i.e., to show that $\varphi_f$ maps to $f$ under the mapping of Proposition \ref{prop3.5} (with $N=p$),  using that $f$ lies in the plus-space.
Indeed we note that by combining (\ref{desc}) with the definition of $\alpha_{\varphi_f}^*$ in section \ref{Theta decomposition} we get for $T=\bmat \ell & t \\ \ov{t} & m\emat$ that \be \label{twoalphas} \alpha^*_{F_f}(D \det T/d^2) = c_{\varphi_f}\left(\frac{\ell m}{d^2}, \frac{t}{d}\right) = \alpha_{\varphi_f}^*\left(D\left(\frac{\ell m}{d^2}-\frac{|t|^2}{d^2}\right)\right).\ee 
We also note that it is enough to consider $\alpha_{\varphi_f}^*(\ell)$ for $\ell \equiv -DpN(u)$ (mod $D$) because of Remark \ref{2.1} and the fact that multiplication by the norm of an element $\alpha \in \Oo$ prime to $D$ induces a bijection on the set $N(\mD^{-1}/\Oo)$.

 We now calculate that analogous to (\ref{g_u2}) we have
\begin{equation} \label{f_u3} f_{\pi u}(\tau)= \frac{-i \sqrt{D}}{a_D(-DpN(u))}  \sum_{\ell \in \bfZ_+, -\ell \equiv DpN(u) \mod{D}} a_{\ell}(f) e[\ell \tau/D].\end{equation}
For this we express $f_u$ in terms of $g^i_u$ using  Definition \ref{piu} and utilize the ``Fourier expansion'' of $g^i_u$ given by (\ref{g_u2}) to get that for  $\ell \equiv -DpN(u) \mod{D}$ the coefficient of $e[\ell \tau/D]$ in the expansion of $f_{\pi u}$ is
$$-i \sqrt{D} \left(\frac{\lambda a_{\ell}(g^1)}{a_D(-DpN(u))} +  \frac{\lambda a_{\ell/p}(g^2)}{a_D(-DN(u))} \right).$$
By \cite{Krieg91} formula 4(5) we know that $$a_D(\ell) = \begin{cases} 1+\chi_K(-\ell) & D \nmid \ell\\ 0 & D \mid \ell. \end{cases}$$ Since $\textup{gcd} (p,D)=1$, we see that $D\mid DpN(u)$ if and only if $D \mid DN(u)$. Also, since $p$ is split we have $\chi_K(p)=1$, so for all $u \in \mD^{-1}$ we get $a_D(-DpN(u)) = a_D(-DN(u))$, which proves \eqref{f_u3}.

On the other hand, since $\varphi_f$ is a Jacobi form, we get a decomposition of $\varphi_f$ as in (\ref{decompJ}). Since such a decomposition is unique
 the $f_u$s in section \ref{Theta decomposition} coincide with the $f_u$s  considered in \eqref{f_u3} which enter  in the definition of $\varphi_f$ (cf. Proposition \ref{varphi1}). Thus we have by (\ref{fut}) that $$f_u = \sum_{\substack{ \ell \in \bfZ_+\\ \ell \equiv -DN(u) \hspace{1pt} \textup{mod} \hspace{1pt} D}} \alpha^*_{\varphi_f}(\ell) e[\ell \tau/D].$$ 
Comparing this with \eqref{f_u3} then implies that for $\ell \equiv -DpN(u) \mod{D}$ we have
$$\alpha_{\varphi_f}^*(\ell)=-i \sqrt{D} \left(\frac{a_{\ell}(f)}{a_D(-DpN(u))} \right).$$  The injectivity of the map from $f$ to $\varphi_f$ is clear since  we showed that it is the inverse to (the restriction to $S^+_{k-1}(Dp, \chi_K)^{p-{\rm old}}$ of) the map from Proposition \ref{prop3.5}. Combined with Theorem \ref{Jacobiisom}  this shows that the map $f \mapsto F_f$ must be injective.  This concludes the proof of the theorem. 
\end{proof}

\begin{rem} \label{rem4.6} Our construction is very different from that of Kawamura in \cite{Kawamura10preprint} 
who, in the setting of the 
Maass lift to Siegel modular forms, produces an analogous lift for Hecke eigenforms by $p$-stabilising the classical full level lift. 
As it is not clear which $p$-stabilisation procedure to follow for Hermitan forms we chose this more direct approach.  
Note that  our construction allows us to lift any oldform (as opposed to only eigenforms), i.e., is more in the spirit of Krieg \cite{Krieg91} and corresponds to what Ikeda calls a `linearized' lift   - cf. \cite{Ikeda08} sections 15 and 16 for full level version. 
However, if $f \in  S^+_{k-1}(Dp, \chi_K)^{p-{\rm old}}$ arises from an eigenform  $h \in S_{k-1}(D, \chi_K)$, then it follows from Proposition \ref{about Up} and Remark \ref{remdesc} 
that  our Maass lift $F_f$ is indeed  a Hecke eigenform (at least away from $D$)  with eigenvalues agreeing with those of the classical Maass lift studied by Krieg et al. 
of $h$ 
away from $p$ and $D$. The Maass lift  is semi-ordinary at $p$ 
provided that $h$ is ordinary 
 at $p$ (cf. section \ref{eigenforms}).
\end{rem}

\section{The Hecke invariance} \label{The Hecke invariance}
The goal of this section is to prove that the Maass space is invariant under the action of certain  Hecke operators. As before we assume that $\#\Cl_K=1$, $D_K$ is prime and $N$ is prime to $D_K$.

\subsection{The good primes} Let $p$ be a prime such that $p \nmid ND_K$. Consider $F \in \mM^*_k(N)$. A set of generators of the local Hecke algebra  $\mH_p$  at $p$ is given in sections 4.1.1 and 4.1.2 of \cite{Klosin15}. Since this case is almost identical to the level 1 case, we will not need the precise definitions here and instead refer the reader to \cite{Klosin15} for details.
\begin{prop} For any $T \in \mH_p$, one has $T F \in \mM_k^*(N)$. \end{prop}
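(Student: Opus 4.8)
The form $TF$ automatically lies in $\mM_k(N)$, since the operators generating $\mH_p$ are defined on automorphic forms of level $N$ and preserve that space; so the only thing requiring proof is that $TF$ again satisfies the defining Fourier-coefficient relation of the Maass space. The plan is therefore to compute the Fourier coefficients $C_{TF}(T)$ for each generator $T$ of $\mH_p$, insert the Maass relation
\[
C_F(T)=\sum_{\substack{d\mid \epsilon(T)\\ \gcd(d,N)=1}} d^{k-1}\alpha^*_F(D_K\det T/d^2),
\]
and verify that the result again has this shape for a suitable function $\alpha^*_{TF}\colon\bfZ_{\geq 0}\to\bfC$.

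First I would recall from \cite{Klosin15} Sections 4.1.1--4.1.2 the explicit generators of $\mH_p$ together with the combinatorial description of the induced action $C_F(T)\mapsto C_{TF}(T)$ on Fourier indices $T\in\mS$: each is a finite sum over a prescribed set of lattice points near $T$, weighted by powers of $p$. Since $p\nmid ND_K$, these local data at $p$ are identical to those in the level-one situation treated in \cite{Klosin15}: the subgroup $\Gamma_0^{(2)}(N)$ and the operators in $\mH_p$ interact only through their behaviour at $p$, which is unramified, so the level structure at $N$ plays no role in the $p$-local coset geometry. Consequently the computation of $C_{TF}(T)$ in terms of the values $\alpha^*_F(D_K\det T'/d^2)$ proceeds exactly as in loc.\ cit., and the task reduces to re-indexing the resulting double sum (over the Hecke coset data and over the divisors $d$) so as to exhibit $C_{TF}(T)$ in the form $\sum_{d\mid\epsilon(T),\ \gcd(d,N)=1} d^{k-1}\alpha^*_{TF}(D_K\det T/d^2)$, thereby reading off $\alpha^*_{TF}$.

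The one point that formally distinguishes the level-$N$ case from level one is the coprimality constraint $\gcd(d,N)=1$ in the Maass relation. The action of the generators of $\mH_p$ redistributes Fourier coefficients by multiplying indices by powers of $p$ and by altering the $p$-part of $\epsilon(T)$; because $\gcd(p,N)=1$, every power of $p$ introduced is automatically coprime to $N$, so the condition $\gcd(d,N)=1$ is preserved under the reorganization and no terms violating it are created. This is precisely what makes the argument ``almost identical to the level $1$ case'': the coprimality condition at $N$ simply rides along harmlessly through the level-one computation.

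I expect the main obstacle to be purely bookkeeping, namely matching the lattice-coset description of the $\mH_p$-action on $\mS$ against the divisor sum in the Maass relation while correctly tracking the powers of $p$ through the quantity $D_K\det T$ under the Hecke correspondence. Since this is exactly the verification carried out for $N=1$ in \cite{Klosin15}, I would present the argument by indicating the (minor) modifications needed to carry the condition $\gcd(d,N)=1$ along, rather than reproducing the level-one calculation in full, and then conclude that $\alpha^*_{TF}$ is well defined and that $TF\in\mM_k^*(N)$.
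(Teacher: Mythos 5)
Your proposal is correct and follows essentially the same route as the paper: the paper's proof likewise reduces to the known level-one computations (Theorem 7 of Krieg for inert $p$, Theorem 5.10 of \cite{Klosin15} for split $p$) and observes that the only modification needed is inserting the condition $\gcd(d,N)=1$ in the relevant spots, which is harmless precisely because $p\nmid N$. The one cosmetic difference is that the paper explicitly separates the inert and split cases with their respective references, whereas you treat them uniformly via \cite{Klosin15}; the substance of the argument is the same.
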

\begin{proof}  In case when $p$ is inert (resp. split) in $K$, the proof is just a simple modification (consisting of making sure that the condition $\textup{gcd}(d,N)=1$ can be inserted in all the relevant spots) of the proof of Theorem 7 in \cite{Krieg91} (resp. of Theorem 5.10 in \cite{Klosin15}). 
\end{proof}

\subsection{The  primes dividing $N$} 

Suppose $p \mid N$. 
 In this section we will prove that $ \mM^*_k(N)$ is invariant under the Hecke operator $U_{p}:= \Gamma_0^{(2)}(N) \diag(1,1,p, p) \Gamma_0^{(2)}(N)$. Here the situation turns out to be simpler than in the case of good primes, but since to the best of our knowledge this case has never been specifically treated in the literature, we will include the proof. 
Then 
$$U_p = \Gamma_0^{(2)}(N) \bmat 1 \\ &1\\ &&p\\ &&&p \emat \Gamma_0^{(2)}(N)=\bigsqcup_{\substack{a,c \in \bfZ/p\bfZ\\ b \in \Oo/p\Oo}}\Gamma_0^{(2)}(N) \bmat 1&&a&b \\ &1&\ov{b}&c\\ &&p\\ &&&p \emat .$$

If we write $F(Z)=\sum_{T \geq 0} C_F(T) e[\tr TZ]\in \mM_k(N)$ 
 for the Fourier expansion of $F$, then we have \be \begin{split} (U_pF)(Z) =& p^{-2k}\sum_{T \geq 0} C_F(T)\sum_{\substack{a,c \in \bfZ/p\bfZ\\ b \in \Oo/p\Oo}} e\left[\tr\left( T(Z+\bmat a&b\\ \ov{b}&c \emat\right)p^{-1}\right]\\
=&  p^{-2k}\sum_{T \geq 0} C_F(T)e[\tr TZp^{-1}]\sum_{\substack{a,c \in \bfZ/p\bfZ\\ b \in \Oo/p\Oo}} e\left[\tr T\bmat a&b\\ \ov{b}&c \emat p^{-1}\right]\\
\end{split}\ee
Writing $T=\bmat n&\alpha \\ \ov{\alpha} & m\emat$ with $n,m \in \bfZ$ and $\alpha \in \mD^{-1}$ we see that the last sum equals $$\sum_{\substack{a,c \in \bfZ/p\bfZ\\ b \in \Oo/p\Oo}} e\left[\frac{na+\tr_{K/\bfQ}(\alpha \ov{b}) + mc}{p}\right]=\begin{cases} p^4 & \textup{if $p\mid \epsilon(T)$} \\ 0 & \textup{otherwise}.\end{cases}$$ 
Hence we conclude that \be \label{Upeq}  U_p\sum_{T \geq 0} C_F(T) e[\tr TZ] =p^{-2k+4} \sum_{T \geq 0} C_F(pT)e[\tr TZ].\ee

\begin{prop} \label{preserved} Suppose $F \in \mM_k^*(N)$. Then $U_p F \in \mM_k^*(N)$. \end{prop}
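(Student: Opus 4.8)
The plan is to leverage the Fourier-coefficient identity already established in \eqref{Upeq}, namely $C_{U_pF}(T) = p^{-2k+4}C_F(pT)$, and to upgrade it to the Maass relation. First I would record that $U_pF$ indeed lies in $\mM_k(N)$: since $U_p$ is the Hecke operator attached to the double coset $\Gamma_0^{(2)}(N)\diag(1,1,p,p)\Gamma_0^{(2)}(N)$, which we have written as a finite disjoint union of right cosets $\bigsqcup_i \Gamma_0^{(2)}(N)g_i$, the form $U_pF=\sum_i F|_k g_i$ is $\Gamma_0^{(2)}(N)$-invariant by the standard argument that right multiplication by $\gamma\in\Gamma_0^{(2)}(N)$ permutes the cosets $\Gamma_0^{(2)}(N)g_i$; it is holomorphic with the Fourier expansion recorded in \eqref{Upeq}. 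Thus the whole content of the proposition is to produce a function $\alpha^*_{U_pF}$ exhibiting $p^{-2k+4}C_F(pT)$ as a Maass coefficient.

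The combinatorial heart is a formula for $\epsilon(pT)$. Writing $T=\epsilon(T)T_0$ with $\epsilon(T_0)=1$ (a ``primitive'' element of $\mS$), one has $\frac1q(pT)=\frac{p\epsilon(T)}{q}T_0\in\mS$ if and only if $q\mid p\epsilon(T)$, since for primitive $T_0$ the matrix $\frac{m}{n}T_0$ (in lowest terms) lies in $\mS$ exactly when $n=1$. Hence $\epsilon(pT)=p\,\epsilon(T)$. Combining this with $\det(pT)=p^2\det T$ and the Maass relation for $F$ applied to $pT$ gives
\[
C_F(pT)=\sum_{\substack{d\mid p\epsilon(T)\\ \textup{gcd}(d,N)=1}} d^{k-1}\alpha_F^*\!\left(D_K p^2\det T/d^2\right).
\]
Here is where the hypothesis $p\mid N$ does the work: $\textup{gcd}(d,N)=1$ forces $\textup{gcd}(d,p)=1$, so the condition $d\mid p\epsilon(T)$ is equivalent to $d\mid\epsilon(T)$, and the divisor sum collapses accordingly.

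It then suffices to set
\[
\alpha_{U_pF}^*(\ell):=p^{-2k+4}\,\alpha_F^*(p^2\ell)\qquad(\ell\in\bfZ_{\geq 0}),
\]
and to check that for every $T\geq 0$, $T\neq 0_2$,
\[
C_{U_pF}(T)=p^{-2k+4}C_F(pT)=\sum_{\substack{d\mid\epsilon(T)\\ \textup{gcd}(d,N)=1}} d^{k-1}\alpha_{U_pF}^*\!\left(D_K\det T/d^2\right),
\]
which is immediate once one substitutes $p^2\cdot(D_K\det T/d^2)=D_K p^2\det T/d^2$ into the definition of $\alpha_{U_pF}^*$. This exhibits $U_pF$ as an element of $\mM_k^*(N)$.

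The only real subtlety, and the one step I would write out with care, is the identity $\epsilon(pT)=p\,\epsilon(T)$ together with the observation that $p\mid N$ makes the coprimality condition absorb the extra factor of $p$ in the divisor range; everything else is the bookkeeping of feeding $\det(pT)=p^2\det T$ into the Maass relation. In particular no modularity input is needed beyond the standard fact that the double-coset operator preserves the level.
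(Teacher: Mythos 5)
Your proposal is correct and follows essentially the same route as the paper: starting from \eqref{Upeq}, applying the Maass relation to $pT$, using $\epsilon(pT)=p\,\epsilon(T)$ together with the fact that $p\mid N$ makes the conditions $d\mid\epsilon(pT)$ and $d\mid\epsilon(T)$ equivalent under $\textup{gcd}(d,N)=1$, and defining $\alpha^*_{U_pF}(\ell)=p^{-2k+4}\alpha^*_F(p^2\ell)$. In fact you supply two details the paper leaves implicit --- the verification that $U_pF\in\mM_k(N)$ and the lowest-terms argument proving $\epsilon(pT)=p\,\epsilon(T)$ --- both of which are correct.
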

\begin{proof} Set $G:= U_pF$. Write $G(Z) = \sum_{T \geq 0} C_G(T)e[\tr TZ]$ for the Fourier expansion of $G$. We need to show that there exists a function $\alpha_G^*: \bfZ_{\geq 0} \to \bfC$ such that $C_G(T) = \sum_{\substack{d \in \bfZ_{+}\\ d \mid \epsilon(T)\\ \textup{gcd}(d,N)=1}} d^{k-1}\alpha_G^*(D_K \det T/d^2).$ Write $\alpha_F^*$ for the corresponding function for $F$.
Then we have \be \begin{split} (U_pF)(Z) =& p^{-2k+4}\sum_{T \geq 0} C_F(pT) e [\tr TZ]\\
=&p^{-2k+4} \sum_{T \geq 0} \sum_{\substack{d \in \bfZ_{+}\\ d \mid \epsilon(pT)\\ \textup{gcd}(d,N)=1}} d^{k-1} \alpha^*_F(D_K \det (pT)/d^2) e[\tr TZ]\\
= &p^{-2k+4}\sum_{T \geq 0}  \sum_{\substack{d \in \bfZ_{+}\\ d \mid \epsilon(T)\\ \textup{gcd}(d,N)=1}} d^{k-1} \alpha^*_F(p^2D_K \det T/d^2) e[\tr TZ],\end{split}\ee where the last equality comes from the fact that $\epsilon(pT) = p\epsilon(T)$ and since $p \mid N$, the condition $\textup{gcd}(d,N)=1$ forces the conditions $d \mid \epsilon(pT)$ and $d \mid \epsilon(T)$ to be equivalent. We can thus set $\alpha_G^*(x) := p^{-2k+4}\alpha_F^*(p^2x).$ \end{proof}

\subsection{Maass lifts of ordinary eigenforms} \label{eigenforms}
 If we use the ``arithmetic'' normalization of the Hecke action and scale the slashing operator $|_k \gamma$ by the additional factor of $\mu (\gamma)^{2k-4}$, 
then since $\mu(\diag(1,1,p,p)) = p$, the factor $p^{-2k+4}$ in (\ref{Upeq}) will disappear. Here $\mu$ denotes the similitude homomorphism defined on $\GU(2,2)$. Fix an embedding $\ov{\bfQ}_p \hookrightarrow \bfC$. We call a newform $h=\sum_{n=1}^{\infty} a_n(h) q^n \in S_{k-1}(D_K,\chi_K)$ $p$-ordinary if $\val_p(a_p(h))=0$. 
If this is the case then the Hecke polynomial $X^2 -a_p(h)X +p^{k-2}\chi_K(p)$ has two roots, one of which, say $\alpha$, is a $p$-adic unit, while the other, say $\beta$, is not.  Furthermore, the form  $f:= (h-h^c) - \beta (h-h^c)|_{k-1}\bmat p \\ &1\emat$ 
 lies in   $ S^+_{k-1}(D_Kp, \chi_K)^{p-\textup{old}}$. As usual 
  one can define the classical ``$U_p$'' operator on $S_{k-1}(D_Kp, \chi_K)$, which we denote here by $U(p)$ to distinguish it from the $U_p$ operator defined above, by setting $U(p)\sum_{n=1}^{\infty} a_n q^n:= \sum_{n=1}^{\infty} a_{np} q^n$. This operator preserves the plus-space if $p$ is split in $K/\bfQ$, while its square preserves it for all $p$. 

From now on suppose $p$ splits in $K/\bfQ$. Let $F_f\in \mM^*_{k}(p)$ be the Maass lift of $f$. We record the following result.
\begin{prop} \label{about Up} One has $U(p)f=\alpha f$ 
and $U_pF_f = \alpha^2 F_f$ where $U_p$ is normalized arithmetically. \end{prop}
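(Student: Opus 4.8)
The plan is to prove the two eigenvalue statements in turn, deducing the second from the first. Throughout write $S_p := \bmat p&0\\0&1\emat$, so that $h|_{k-1}S_p$ is the form $h(p\cdot)$.

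For $U(p)f = \alpha f$ I would first reduce to the standard theory of $p$-stabilization. Since $h$ (and likewise $h^c$) is a newform of level $D_K$ prime to $p$, the two classical relations $U(p)h = a_p(h)\,h - \chi_K(p)p^{k-2}\,h|_{k-1}S_p$ and $U(p)(h|_{k-1}S_p) = h$, checked directly on $q$-expansions via $a_{pn}(h) = a_p(h)a_n(h) - \chi_K(p)p^{k-2}a_{n/p}(h)$, show that the $\beta$-stabilization $h - \beta\,h|_{k-1}S_p$ is a $U(p)$-eigenform with eigenvalue $\alpha$, using $\alpha+\beta = a_p(h)$ and $\alpha\beta = \chi_K(p)p^{k-2} = p^{k-2}$. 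The crucial point, and what I expect to be the only genuine subtlety, is that the same $\alpha,\beta$ work for $h^c$. This is where the splitting hypothesis on $p$ enters: combining $a_p(h^c) = \overline{a_p(h)}$ with \eqref{m4.6.17}, which gives $a_p(h^c) = \chi_K(p)a_p(h) = a_p(h)$ since $\chi_K(p)=1$, one sees that $a_p(h)$ is real and that $h^c$ has the very same Hecke polynomial $X^2 - a_p(h)X + p^{k-2}$ at $p$. Hence $h^c - \beta\,h^c|_{k-1}S_p$ is again an $\alpha$-eigenform.

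Since $f = (h-h^c) - \beta(h-h^c)|_{k-1}S_p = \bigl(h - \beta\,h|_{k-1}S_p\bigr) - \bigl(h^c - \beta\,h^c|_{k-1}S_p\bigr)$, linearity of $U(p)$ then yields $U(p)f = \alpha f$ at once.

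For $U_p F_f = \alpha^2 F_f$ I would work entirely on the level of the function $\alpha^*_{F_f}$, which determines a form in $\mM^*_k(p)$ (all Fourier coefficients, including $C_F(0_2)$, are functions of $\alpha^*_F$). By Proposition \ref{preserved} and the arithmetic normalization described at the start of this subsection, the factor $p^{-2k+4}$ in \eqref{Upeq} is cancelled by $\mu(\diag(1,1,p,p))^{2k-4} = p^{2k-4}$, so the arithmetically normalized $U_p$ acts on the Maass space by $\alpha^*_{U_p F}(\ell) = \alpha^*_{F}(p^2\ell)$. Thus it suffices to prove $\alpha^*_{F_f}(p^2\ell) = \alpha^2\,\alpha^*_{F_f}(\ell)$ for all $\ell$. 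Using the coefficient formula \eqref{Fc11}, namely $\alpha^*_{F_f}(\ell) = \sqrt{D}\,a_\ell(f)/(i\,a_D(\ell))$, this reduces to two elementary facts. First, $a_D(p^2\ell) = a_D(\ell)$: since $\gcd(p,D)=1$ we have $D\nmid\ell \iff D\nmid p^2\ell$, and $\chi_K(p^2\ell)=\chi_K(\ell)$ because $\chi_K(p)^2=1$, so the explicit value $a_D(\ell)=1-\chi_K(\ell)$ for $D\nmid\ell$ (and $a_D(\ell)=0$ for $D\mid\ell$) is unchanged. Second, $a_{p^2\ell}(f) = \alpha^2 a_\ell(f)$: this is exactly the content of the first part, as $U(p)f=\alpha f$ gives $a_{pn}(f)=\alpha a_n(f)$ for every $n$, whence $a_{p^2\ell}(f)=\alpha^2 a_\ell(f)$. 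Combining these, $\alpha^*_{U_pF_f} = \alpha^2\alpha^*_{F_f} = \alpha^*_{\alpha^2 F_f}$, and since a form in $\mM^*_k(p)$ is determined by its $\alpha^*$-function we conclude $U_pF_f = \alpha^2 F_f$. The whole second half is mechanical once the first part is in hand; the real work is establishing that $h$ and $h^c$ are stabilized by the same scalar $\beta$.
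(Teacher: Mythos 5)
Your proposal is correct and follows essentially the same route as the paper's own proof: the first assertion is the standard $p$-stabilization computation, transferred from $h$ to $h-h^c$ via $a_p(h^c)=\chi_K(p)a_p(h)=a_p(h)$ from \eqref{m4.6.17} (valid since $p$ splits), and the second is deduced from the first together with \eqref{Fc11}, the relation $\alpha^*_{U_pF}(\ell)=\alpha^*_{F}(p^2\ell)$ for the arithmetically normalized $U_p$ (from the proof of Proposition \ref{preserved}), and $a_D(p^2\ell)=a_D(\ell)$. You have simply written out in full the steps the paper compresses into citations of ``standard facts.''
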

\begin{proof} The first assertion is a standard fact for forms defined in the same way as $f$ but with $h$  in place of $h-h^c$. Thus it remains true for $f$ since $a_p(h^c)=\chi(p) a_p(h)=a_p(h)$ (cf. (\ref{m4.6.17})).  The second assertion follows from this, formula (\ref{Fc11}) and the fact that $\alpha^*_{U_pF_f}(\ell) = \alpha^*_{F_f}(p^2\ell)$ 
(cf. the last line of the  proof of Proposition \ref{preserved} combined with our ``arithmetic normalization'' of $U_p$) as well as the fact that $a_{D_K}(\ell) = a_{D_K}(p^2\ell)$.
 \end{proof}
\begin{rem} \label{remdesc} Proposition \ref{about Up} can be seen as asserting that the Hermitian Hecke operator $U_p$ `descends' to the elliptic Hecke operator $U(p)^2$. This is somewhat analogous to the case of Siegel modular forms, where the $U_p$ operator descends to $U(p)$ (i.e., no square) - this follows from combining Theorem 4.1 in \cite{Ibukiyama12} with the work of Kohnen \cite{Kohnen82} and Manickam et al. \cite{ManickamRamakrishnanVasudevan93} (we are grateful to Jim Brown for providing us with these references). Furthermore, Theorems 5.18 and 5.19 in \cite{Klosin15} imply that $F_f$ is also an eigenform for the Hecke algebras $\mH_{\ell}$ for primes $\ell \nmid D_Kp$ and provide analogous `descent formulas' for the standard generators of these algebras  (but we caution the reader that the notation used in \cite{Klosin15} conflicts with ours - in particular $U_p$ in \cite{Klosin15} is different from our $U_p$). \end{rem}

\section{A $p$-adic interpolation of the Maass lift} \label{A prime to p interpolation}
As before in this section we assume that $K$ has class number one, that its discriminant $D=D_K$ is prime and we fix a prime $p$ which splits in $K$. 
In this section we write $\Oo_K$ for the ring of integers of $K$ 
(since we reserve the notation $\Oo$ for a $p$-adic ring as defined below). 
From now on we fix embeddings $\ov{\bfQ} \hookrightarrow \ov{\bfQ}_p \cong \bfC$.
Let  
 $E \subset \ov{\bfQ}_p$ be a sufficiently large 
finite extension of $\bfQ_p$ and set $\Oo$ to be the valuation ring of $E$. 
Set $\Gamma =1+p\bfZ_p$ and $\Lambda_{\Oo}:= \Oo[[\Gamma]]$.  The $p$-adic cyclotomic character $\epsilon:G_{\bfQ} \to \bfZ_p^{\times}$ induces a canonical isomorphism $\Gal(\bfQ_{\infty}/\bfQ) \xrightarrow{\sim} \Gamma$, where $\bfQ_{\infty}$ is the unique $\bfZ_p$-extension of $\bfQ$ unramified away from $p$. Thus we can regard the tautological character $\Gamma \to \Lambda_{\Oo}^{\times}$ as a character of $G_{\bfQ}$ via the composite $G_{\bfQ} \to \Gamma \to \Lambda_{\Oo}^{\times}$. We will denote this composite by $\iota$. We will write $\omega$ for both the mod $p$ cyclotomic character and the associated Dirichlet character (Teichm{\"u}ller lift). 
 For a positive integer $r$, a $p^{r-1}$th root of unity $\zeta$, and an integer $m\geq 2$ we define a 
continuous $\Oo$-algebra 
homomorphism $$\nu_{m, \zeta}: \Lambda_{\Oo} \to \ov{\bfQ}_p,  \quad 1+p \mapsto \zeta(1+p)^{m-1}.$$

Let $k_0$ be a positive integer such that $\# \OK^{\times} \mid k_0$. Let $\tilde{f}_{k_0-1} \in S_{k_0-1}(Dp, \chi_K)$ be a normalized Hecke eigenform 
new at $D$  with $\tilde{f}_{k_0-1}^c \neq \tilde{f}_{k_0-1}$. We assume that $\tilde{f}_{k_0-1}$ is $p$-ordinary, i.e., that its $U(p)$-eigenvalue is a $p$-adic unit. 

By Theorem 1.4.1 of \cite{Wiles88} there exists a finite extension $\mK$ of $\textup{Frac}(\Lambda_{\Oo})$ and a primitive, normalized (i.e., $c_1=1$) $\bfI$-adic Hecke eigenform  $\tilde{\mF}=\sum_n c_n q^n \in \bfI[[q]]$ of tame level $D$ and character $\Psi$ 
such that for some extension $\nu'_{k_0-1}: \bfI\to \ov{\bfQ}_p$  of $\nu_{k_0-1, 1}$, one has
 \be \label{desc11}  \nu'_{k_0-1}(\tilde{\mF}):= \sum_n  \nu'_{k_0-1}(c_n) q^n = \tilde{f}_{k_0-1}.\ee Here $\bfI$ denotes the integral closure of $\Lambda_{\Oo}$ in $\mK$ and $\Psi:= \chi_K \omega^{k_0-2}$. We recall that such an $\tilde{\mF}$ has the  property that for almost all pairs $(k, \zeta)$ and all extensions $\nu'_{k, \zeta}: \bfI \to \ov{\bfQ}_p$ of $\nu_{k, \zeta}$ one has that $ \nu'_{k, \zeta}(\tilde{\mF})$ is a modular form of weight $k$, level $Dp^r$and character $\Psi\omega^{1-k}\chi_{\zeta} = \omega^{k_0-k-1}\chi_K\chi_{\zeta}$.  
The Dirichlet character $\chi_{\zeta}$ of conductor $p^r$ is defined by mapping the image of $1+p$ in $(\bfZ/p^r)^{\times}$ to $\zeta$.
 
  Note that we use a slightly different normalisation to that in \cite{Wiles88}, whose specialisation maps $\nu_{k, \zeta}$ send  $1+p$  to $\zeta (1+p)^{k-2}$ (cf. also the discussion in \cite{BCG} Remark 3.3). 
 We also note that $\tilde{\mF}$ is not necessarily a unique normalized primitive $\bfI$-adic  eigenform with the property (\ref{desc11}), but the $\Gal(\mK/\textup{Frac}(\Lambda_{\Oo}))$-conjugacy class of $\tilde{\mF}$ is unique. In other words here we pick a member of this Galois conjugacy which lifts $\tilde{f}_{k_0-1}$.

For $k \equiv k_0$ (mod $p-1$)  - so that $\omega^{k_0-k} \equiv 1$ - we have that 
$\nu'_{k-1}( \tilde{\mF}) \in S_{k-1}(Dp, \chi_K)$ is a normalized $p$-ordinary eigenform. 
As is well-known thanks to the work of Hida (cf. Theorem 2.1 in \cite{HIda86} and also Theorem 6.1 in \cite{BCG}), one can associate to $\tilde{\mF}$ 
 a continuous (for the meaning of continuity in this context cf. \cite{HIda86}, p. 557), absolutely irreducible Galois representation 
$$\rho_{\tilde{\mF}}: G_{\bfQ} \to \GL_2(\mK)$$ which is unramified outside $Np$ and for each prime $q \nmid Np$ satisfies
$$\det (1-\rho_{\tilde{\mF}}(\Frob_q)X) = 1-c_qX + (\Psi \iota)(\Frob_q)X^2 .$$  
In particular one has  \be \label{detform}\det \rho_{\tilde{\mF}} =\Psi \iota.\ee 
We will write $\mV$ for the space of this representation.

As it follows from \cite{Miyake89} Theorem 4.6.17 that there are no ordinary newforms of level $Dp$, 
 for almost every $k$ and every extension $\nu'_{k-1}$ of $\nu_{k-1,1}$ the form $\nu'_{k-1}(\tilde{\mF})$ is a $p$-stabilization of an ordinary  newform $h_{\nu'_{k-1}} = \sum_n a_n(h_{\nu'_{k-1}})q^n \in S_{k-1}(D, \chi_K)$. To ease notation, 
 as before, we will write $h'_{k-1}$ instead of $h_{\nu'_{k-1}}$ (the prime (here and below) indicating that the form  potentially depends on the choice of an extension $\nu'_{k-1}$ of $\nu_{k-1,1}$). We then define $f'_{k-1}$ by $$f'_{k-1}:=g'_{k-1} - \beta'_{k-1}g'_{k-1}|_{k-1}\bmat p&0\\0&1 \emat,$$ where $g'_{k-1}:= h'_{k-1}-(h'_{k-1})^c$ and $\beta'_{k-1} := p^{k-2}/\alpha'_{k-1}$ with $\alpha'_{k-1}$ the eigenvalue of 
 $U(p)$ corresponding to 
$\nu'_{k-1}(\tilde{\mF})$. In other words we go counter-clockwise on the following diagram:
$$\xymatrix{\{\nu'_{k-1}(\tilde{\mF})\in S_{k-1}(Dp, \chi_K)^{p-\textup{old}}\}_{\nu'_{k-1}} & \{f'_{k-1}\in S_{k-1}^+(Dp, \chi_K)^{p-\textup{old}}\}_{\nu'_{k-1}} \\ \{h'_{k-1}\in S_{k-1}(D, \chi_K)\}_{\nu'_{k-1}} \ar[u]^{p-\textup{stabilization}}\ar[r]& \{g'_{k-1}:=h'_{k-1}-(h'_{k-1})^c \in S^+_{k-1}(D, \chi_K)\}_{\nu'_{k-1}} \ar[u]_{p-\textup{stabilization}}}$$
We will prove in Theorem \ref{newfamily} below that $\{f'_{k-1}\}_{\nu'_{k-1}}$ 
 is indeed a $p$-adic analytic family. (Note that it follows from Lemma \ref{lem4.1} (1) that $f'_{k-1}$ belongs to $S_{k-1}^+(Dp,\chi_K)$.) Let us  remark here that since our construction of the Maass lift does not allow $p^2$ to divide the level, we have to limit ourselves here to $r=1$, i.e., $\zeta=1$. 
 This amounts in essence to constructing a family only over $\bfQ_p$-points of the weight space, as opposed to $\ov{\bfQ}_p$-points.

\begin{thm} \label{newfamily} There exists a finite set $\mA$ of $k\equiv k_0$ (mod $p-1$) and an $\bfI$-adic form $\mF =\sum_n b_n(\mF) q^n \in \bfI[[q]]$ which has the property that for every $n$ and every $k\equiv k_0$ (mod $p-1$) with $k\not\in \mA$, every extension $\nu'_{k-1}:\bfI \to \ov{\bfQ}_p$ of $\nu_{k-1,1}$  
maps the Fourier coefficient $b_n(\mF)$ to the $n$th Fourier coefficient of the form $f'_{k-1}$  with $f'_{k-1}$ defined as above. \end{thm}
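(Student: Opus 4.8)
The plan is to construct $\mF$ coefficient by coefficient, exhibiting each $b_n(\mF)\in\bfI$ explicitly and checking that $\nu'_{k-1}(b_n(\mF))=a_n(f'_{k-1})$ for all but finitely many $k\equiv k_0\pmod{p-1}$. First I would record the reduction forced by the shape of $f'_{k-1}$. Since $\chi_K(p)=1$, formula \eqref{m4.6.17} gives $a_p(h'_{k-1})=a_p((h'_{k-1})^c)$, so $h'_{k-1}$ and $(h'_{k-1})^c$ share the same Hecke polynomial at $p$; hence $g'_{k-1}=h'_{k-1}-(h'_{k-1})^c$ is again a $T_p$-eigenform and $f'_{k-1}=g'_{k-1}-\beta'_{k-1}g'_{k-1}|_{k-1}\bmat p&0\\0&1\emat$ is its ordinary $p$-stabilization, with $U(p)f'_{k-1}=\alpha'_{k-1}f'_{k-1}$ by Proposition \ref{about Up}. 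Consequently $a_m(f'_{k-1})=a_m(g'_{k-1})$ whenever $\gcd(m,p)=1$ and $a_{p^jm}(f'_{k-1})=(\alpha'_{k-1})^j a_m(g'_{k-1})$; as $\alpha'_{k-1}=\nu'_{k-1}(c_p)$, the whole $p$-part is interpolated by powers of $c_p$, and the problem reduces to interpolating $a_m(g'_{k-1})$ for $\gcd(m,p)=1$.

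For the prime-to-$D$ part this is immediate from \eqref{m4.6.17}: writing $m=D^i m'$ with $\gcd(m',Dp)=1$, multiplicativity gives $a_m(h'_{k-1})=a_D(h'_{k-1})^i a_{m'}(h'_{k-1})$ with $a_{m'}(h'_{k-1})=\nu'_{k-1}(c_{m'})$ and $\overline{a_{m'}(h'_{k-1})}=\chi_K(m')a_{m'}(h'_{k-1})$. The genuine difficulty is the conjugate of the $D$-part $\overline{a_D(h'_{k-1})}$, since complex conjugation is not $p$-adically continuous. Here I would use that $h'_{k-1}$ is new of level exactly $D$ with nebentypus $\chi_K$ of conductor $D$, so its local representation at $D$ is a ramified principal series and $|a_D(h'_{k-1})|^2=D^{k-2}$; hence $\overline{a_D(h'_{k-1})}=D^{k-2}/a_D(h'_{k-1})$. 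Both factors are $p$-adically interpolable: $a_D(h'_{k-1})=\nu'_{k-1}(c_D)$ with $c_D\in\bfI^\times$ (every specialization is a $p$-adic unit because $D$ is prime to $p$), while $D^{k-2}=\nu'_{k-1}(u_D)$ for $u_D:=\omega(D)^{k_0-2}\iota(\Frob_D)\in\Lambda_{\Oo}^{\times}$, which is well defined because $\iota$ is unramified at $D$ and the fixed Teichmüller factor $\omega(D)^{k_0-2}$ exactly compensates $\langle D\rangle^{k-2}$ using $k\equiv k_0\pmod{p-1}$. This is the point where the local analysis at $D$ of $\rho_{\tilde{\mF}}$ from \cite{EmertonPollackWeston06} enters, both to fix the principal-series local type (whence $c_D\in\bfI^\times$) and to guarantee the analytic interpolation of $\overline{a_D}$.

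Assembling these, for $n=p^jD^i m'$ with $\gcd(m',Dp)=1$ I would set
$$b_n(\mF):=c_p^{\,j}\,c_{m'}\bigl(c_D^{\,i}-\chi_K(m')(u_D c_D^{-1})^{\,i}\bigr)\in\bfI,$$
and check directly that $\nu'_{k-1}(b_n(\mF))=(\alpha'_{k-1})^j\bigl(a_D(h'_{k-1})^i-\overline{a_D(h'_{k-1})}^{\,i}\chi_K(m')\bigr)a_{m'}(h'_{k-1})$, which collapses via $\chi_K(m')a_{m'}(h'_{k-1})=\overline{a_{m'}(h'_{k-1})}$ to $(\alpha'_{k-1})^j(a_m(h'_{k-1})-\overline{a_m(h'_{k-1})})=(\alpha'_{k-1})^j a_m(g'_{k-1})=a_n(f'_{k-1})$. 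Finally I would take $\mA$ to be the finite set of weights where $\nu'_{k-1}(\tilde{\mF})$ fails to be the $p$-stabilization of a level-$D$ newform; this is finite by Hida theory together with the absence of ordinary newforms of level $Dp$ (\cite{Miyake89} Theorem 4.6.17), and the restriction to $\zeta=1$, i.e.\ $r=1$, is forced by the requirement that the Maass lift cannot accommodate $p^2\mid$ level.

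The main obstacle is precisely the interpolation of $\overline{a_D(h'_{k-1})}$: everything else is formal bookkeeping with the Hida family, but the conjugated coefficient at the ramified prime $D$ has no a priori $p$-adic meaning, and it is only the identity $\overline{a_D}=D^{k-2}/a_D$ coming from the ramified principal-series local type, combined with the unramifiedness of $\iota$ at $D$ and the local study of the family in \cite{EmertonPollackWeston06}, that restores $p$-adic continuity.
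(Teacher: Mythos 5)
Your proposal is correct and, at the level of the main construction, coincides with the paper's proof: you use the same decomposition $n=p^jD^im'$, and your interpolating element $c_p^{\,j}c_{m'}\bigl(c_D^{\,i}-\chi_K(m')(u_Dc_D^{-1})^{\,i}\bigr)$ is literally the paper's $b_{MD^sp^r}(\mF)=c_{Mp^r}\bigl(\mu(\Frob_D)^s-\chi_K(M)(\iota\omega^{k_0-2}\mu^{-1})(\Frob_D)^s\bigr)$, since $c_{Mp^r}=c_Mc_p^r$ and $\mu(\Frob_D)=c_D$; both arguments rest on the identity $a_D\ov{a_D}=D^{k-2}$ of \eqref{4.6.17} (\cite{Miyake89} Theorem 4.6.17(1)) and on the fact that $(\iota\omega^{k_0-2})(\Frob_D)$ specializes to $D^{k-2}$ when $k\equiv k_0 \pmod{p-1}$. (Your preliminary reduction $a_{p^jm}(f'_{k-1})=(\alpha'_{k-1})^ja_m(g'_{k-1})$ via Proposition \ref{about Up} is just a repackaging of the factorization \eqref{Fc1} in the paper.) The one genuine divergence is how the invertibility of $c_D$ in $\bfI$ is secured. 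The paper uses \eqref{4.6.17} to check that condition (2) of Lemma 2.6.2 of \cite{EmertonPollackWeston06} fails, so that lemma yields a free rank-one unramified quotient $\mV/I_D\mV$ on which $\Frob_D$ acts through a character $\mu$ with $\mu(\Frob_D)=c_D$, whence $c_D\in\bfI^{\times}$. You instead argue elementarily: $a_D(h'_{k-1})$ and $\ov{a_D(h'_{k-1})}$ are algebraic integers whose product $D^{k-2}$ is prime to $p$, so each specialization of $c_D$ is a $p$-adic unit, and since $\bfI$ is local and an arithmetic specialization pulls the maximal ideal of its target back to $\fm_{\bfI}$, this forces $c_D\in\bfI^{\times}$. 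Both routes are valid; yours buys a self-contained proof of the key unit property without Galois-theoretic input (your additional remark on the ramified principal series type is not actually needed), while the paper's appeal to \cite{EmertonPollackWeston06} packages the same fact as the existence of the unramified quotient character, which meshes naturally with $\det\rho_{\tilde{\mF}}=\Psi\iota$ from \eqref{detform} and identifies $u_Dc_D^{-1}$ as the complementary Frobenius eigenvalue.
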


\begin{proof} Fix $k$ and write $a_n$ for $a_n(h'_{k-1})$, the $n$th Fourier coefficient of $h'_{k-1}$. Let $\beta$ be the root of $X^2-a_p X + p^{k-2}$ which is not a $p$-adic unit.  
Then the $n$th Fourier coefficient $a_n(f'_{k-1})$ of  $f'_{k-1}$ equals $ (a_n-\ov{a_n}) - \beta (a_{n/p} - \ov{a_{n/p}})$. 
Writing $n=MD^sp^r$ with $pD\nmid M$ and using \eqref{m4.6.17} we get 
\be \begin{split} \label{Fc1}a_n(f'_{k-1})= & a_M a_{D^s}a_{p^r} - \chi_K(M)\chi_K(p^r)a_M \ov{a_{D^s}}a_{p^r} \\
-& \beta(a_M a_{D^s}a_{p^{r-1}}- \chi_K(M)\chi_K(p^{r-1})\chi_K(p)a_M \ov{a_{D^s}}a_{p^{r-1}})\\
= &a_M[a_{p^r}(a_{D^s}-\chi_K(M) \ov{a_{D^s}}) - \beta a_{p^{r-1}}(a_{D^s}-\chi_K(M) \ov{a_{D^s}})]\\
= &a_M(a_{D^s}-\chi_K(M) \ov{a_{D^s}})(a_{p^r}-\beta a_{p^{r-1}})
\end{split}\ee
Note that $a_M(a_{p^r}-\beta a_{p^{r-1}})$ is the $Mp^r$-th Fourier coefficient of the $p$-stabilization $\nu'_{k-1}(\tilde{\mF})$ of $h'_{k-1}$, i.e., $a_M(a_{p^r}-\beta a_{p^{r-1}})=\nu'_{k-1}(c_{Mp^r})$.

To deal with $\ov{a_{D}^s}$ we note that since the $D$-eigenvalue $a_{D}(h'_{k-1})$ of $h'_{k-1}$ satisfies \be \label{4.6.17} a_{D}(h'_{k-1})\ov{a_{D}(h'_{k-1})}=D^{k-2}\ee

by Theorem 4.6.17(1) in \cite{Miyake89}, we see that condition (2) in Lemma 2.6.2 in \cite{EmertonPollackWeston06} is not satified for the height one prime corresponding to $h'_{k_0-1}$. Hence by that lemma  we get that the quotient $\mV/I_{D}\mV$ is an (unramified) rank one $\bfI$-module   on which $\Frob_{D}$ acts via the eigenvalue of $\tilde{\mF}$ at $D$. We denote this character by $\mu$ and we have $\mu(\Frob_{D})=c_{D}$. 
Set 
$$b_{MD^sp^r}(\mF):= c_{Mp^r}(\mu(\Frob_D)^s - \chi_K(M)(\iota \omega^{k_0-2} \mu^{-1})(\Frob_D)^s)\in \bfI.$$

The theorem now follows  from (\ref{4.6.17})  as 
 $(\iota \omega^{k_0-2})(\Frob_D)$ specializes to $D^{k-2}$ 
 for $k\equiv k_0$ (mod $p-1$). 
\end{proof}

From now on assume that $p\equiv 1$ (mod $\#\OK^{\times}$) and assume that $k_0 \in \{0,1, \dots, p-2\}$ such that $\#\OK^{\times} \mid k_0$. Then for every $k \equiv k_0$ (mod $p-1$) we will have $\#\OK^{\times} \mid k$.

\begin{lemma} Let $d$ be an integer prime to $p$. There exists a power series $A_d(T) \in \bfZ_p[[T]]$ such that 
$A_d((1+p)^{k}-1) = \omega(d)^{-k} d^{k-1}$. \end{lemma}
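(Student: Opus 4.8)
The plan is to reduce everything to the canonical decomposition $\bfZ_p^{\times}\cong\mu_{p-1}\times(1+p\bfZ_p)$ together with the fact that $1+p$ topologically generates the principal units (here I use that $p$ is odd). First I would write $d=\omega(d)\langle d\rangle$, where $\omega(d)\in\mu_{p-1}$ is the Teichm\"uller representative already fixed in the paper and $\langle d\rangle:=\omega(d)^{-1}d\in 1+p\bfZ_p$; this uses $\gcd(d,p)=1$. Since $1+p$ is a topological generator of $1+p\bfZ_p$, there is a unique $s=s(d)\in\bfZ_p$ with $\langle d\rangle=(1+p)^{s}$.

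Next I would simplify the quantity to be interpolated. A direct computation gives
$$\omega(d)^{-k}d^{k-1}=\omega(d)^{-k}\bigl(\omega(d)\langle d\rangle\bigr)^{k-1}=\omega(d)^{-1}\langle d\rangle^{k-1}=d^{-1}\langle d\rangle^{k}=d^{-1}(1+p)^{ks},$$
where in the penultimate step I recombined $\omega(d)^{-1}\langle d\rangle^{-1}=d^{-1}$ after multiplying and dividing by $\langle d\rangle$. This makes the right choice of power series transparent: set
$$A_d(T):=d^{-1}(1+T)^{s}=d^{-1}\sum_{n\geq 0}\binom{s}{n}T^{n}.$$
Evaluating at $T=(1+p)^{k}-1$, so that $1+T=(1+p)^{k}$, yields $A_d((1+p)^{k}-1)=d^{-1}\bigl((1+p)^{k}\bigr)^{s}=d^{-1}(1+p)^{ks}$, which equals $\omega(d)^{-k}d^{k-1}$ by the display above; this holds for every integer $k$.

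The only step requiring care---and the main (if mild) obstacle---is verifying that $A_d(T)\in\bfZ_p[[T]]$ rather than merely $\bfQ_p[[T]]$. This comes down to two integrality facts: that $d^{-1}\in\bfZ_p$, which is immediate since $d$ is prime to $p$; and that the generalized binomial coefficients satisfy $\binom{s}{n}\in\bfZ_p$ for every $s\in\bfZ_p$. The latter follows because the polynomial $x\mapsto\binom{x}{n}$ maps $\bfZ$ into $\bfZ$ and is continuous for the $p$-adic topology, so by density of $\bfZ$ in $\bfZ_p$ it maps $\bfZ_p$ into $\bfZ_p$. I would close by noting that the binomial series converges at $T=(1+p)^{k}-1\in p\bfZ_p$, so the evaluation in the previous paragraph is legitimate and indeed recovers $(1+p)^{ks}=\langle d\rangle^{k}$.
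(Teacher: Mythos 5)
Your proof is correct. Note that the paper does not actually prove this lemma at all: its entire ``proof'' is a citation to page 197 of Hida's book \cite{Hida93}, so you have supplied the argument that the citation leaves implicit, and it is the standard one. Your construction --- splitting $d=\omega(d)\langle d\rangle$ via $\bfZ_p^{\times}\cong\mu_{p-1}\times(1+p\bfZ_p)$, writing $\langle d\rangle=(1+p)^{s}$ for a unique $s\in\bfZ_p$, and taking $A_d(T)=d^{-1}(1+T)^{s}=d^{-1}\sum_{n\ge 0}\binom{s}{n}T^{n}$ --- is exactly what lies behind Hida's statement. You also correctly identify and dispatch the two points where care is needed: integrality of the coefficients ($d^{-1}\in\bfZ_p$ and $\binom{s}{n}\in\bfZ_p$ by density of $\bfZ$ in $\bfZ_p$), and convergence of the binomial series at $T=(1+p)^{k}-1\in p\bfZ_p$ together with the exponent identity $\bigl((1+p)^{k}\bigr)^{s}=(1+p)^{ks}$, which follows by continuity from the integer case. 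The only implicit hypothesis is that $p$ is odd (so that $1+p\bfZ_p$ is procyclic with generator $1+p$), which is harmless since the paper assumes $p$ splits in the imaginary quadratic field and works with $p>2$ throughout.
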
 

\begin{proof} This is stated on page 197 in \cite{Hida93}. \end{proof}

\begin{thm} \label{LambdaMaass} Let $k_0$ be as above. Let $\mF \in \bfI[[q]]$ be the $\bfI$-adic form in Theorem  \ref{newfamily}. There exists a finite set $\mA$ of $k \equiv k_0$ (mod $p-1$) with the property  that  for every $M\in \mS$, there exists $\mC_{F_{\mF}}(M) \in \bfI$ such that for all $k \equiv k_0$ (mod $p-1$) with $k \not\in \mA$ and all extensions $\nu'_{k-1}: \bfI \to \ov{\bfQ}_p$ of $\nu_{k-1,1}$ one has that $\nu'_{k- 1}(\mC_{F_{\mF}}(M))$ is the $M$th Fourier coefficient of the Maass lift of $\nu'_{k-1}(\mF)$. 
Thus the formal power series $$F_{\mF}:= \sum_{M \in \mS} \mC_{F_{\mF}}(M) q^M\in \bfI[[q]]$$ can be regarded as a $\bfI$-adic Maass lift of the family $\mF$. \end{thm}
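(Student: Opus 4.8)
The plan is to interpolate, one Fourier coefficient at a time, the explicit formula for the Maass lift furnished by Theorem \ref{lift}. Fix $T\in\mS$ with $T\ge 0$. Combining the defining relation of the Maass space $\mM_k^*(p)$ with the formula \eqref{Fc11}, the $T$-th Fourier coefficient of the Maass lift $F_{f'_{k-1}}$ of $f'_{k-1}$ is
\begin{equation*}
C_{F_{f'_{k-1}}}(T)=\sum_{\substack{d\mid\epsilon(T)\\ \gcd(d,p)=1}} d^{k-1}\,\frac{\sqrt{D}}{i\,a_D(D\det T/d^2)}\,a_{D\det T/d^2}(f'_{k-1}),
\end{equation*}
where the case $\det T=0$ (in particular $T=0_2$) forces $C_{F_{f'_{k-1}}}(T)=0$, since $f'_{k-1}$ is cuspidal and hence $\alpha^*_{F_{f'_{k-1}}}(0)=0$. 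For $\det T>0$ each occurring argument $\ell:=D\det T/d^2$ is a positive integer lying in one of the residue classes of Remark \ref{2.1}, so $a_D(\ell)=1-\chi_K(\ell)\in\{1,2\}$ is nonzero, and since $p$ is odd it is a $p$-adic unit. I will therefore set $\mC_{F_{\mF}}(T):=0$ whenever $\det T=0$ and concentrate on $\det T>0$.

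The three ingredients of the displayed sum interpolate separately. First, after enlarging $E$ so that $\sqrt{-D}\in E$, the scalar $\sqrt{D}/(i\,a_D(\ell))=-\sqrt{-D}/a_D(\ell)$ lies in $\Oo\subseteq\bfI$ and is independent of $k$. Second, by Theorem \ref{newfamily} there is a finite set $\mA$ of weights outside of which $\nu'_{k-1}(b_\ell(\mF))=a_\ell(f'_{k-1})$ for every $\ell$; I enlarge $\mA$ if necessary so as to also exclude the finitely many $k$ for which $\nu'_{k-1}(\tilde{\mF})$ fails to be the $p$-stabilization of a newform, so that the above expression for $C_{F_{f'_{k-1}}}(T)$ is genuinely valid. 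Thus $a_\ell(f'_{k-1})$ is interpolated by $b_\ell(\mF)\in\bfI$.

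The remaining $k$-dependence is the factor $d^{k-1}$, which I interpolate using the Lemma on the power series $A_d\in\bfZ_p[[T]]$. Writing $u=[1+p]\in\Lambda_{\Oo}^{\times}$ for the tautological group-like element, one has $\nu'_{k-1}(u)=(1+p)^{k-2}$, hence $\nu'_{k-1}\bigl((1+p)^2u-1\bigr)=(1+p)^{k}-1$; moreover $(1+p)^2u-1$ lies in the maximal ideal of $\Lambda_{\Oo}$, so $A_d\bigl((1+p)^2u-1\bigr)$ converges in $\Lambda_{\Oo}$. Setting
\begin{equation*}
g_d:=\omega(d)^{k_0}\,A_d\bigl((1+p)^2u-1\bigr)\ \in\ \Lambda_{\Oo}\subseteq\bfI,
\end{equation*}
the Lemma gives $\nu'_{k-1}(g_d)=\omega(d)^{k_0}\,\omega(d)^{-k}d^{k-1}=\omega(d)^{k_0-k}d^{k-1}=d^{k-1}$, the last equality because $\omega$ is trivial on $(p-1)$-th powers and $k\equiv k_0\pmod{p-1}$.

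Finally I assemble, for $\det T>0$,
\begin{equation*}
\mC_{F_{\mF}}(T):=\sum_{\substack{d\mid\epsilon(T)\\ \gcd(d,p)=1}} \frac{\sqrt{D}}{i\,a_D(D\det T/d^2)}\,g_d\,b_{D\det T/d^2}(\mF)\ \in\ \bfI,
\end{equation*}
a finite $\bfI$-linear combination, and verify termwise that $\nu'_{k-1}(\mC_{F_{\mF}}(T))=C_{F_{f'_{k-1}}}(T)$ for all $k\equiv k_0\pmod{p-1}$ with $k\notin\mA$, which is exactly the claimed interpolation. I expect the only delicate points to be bookkeeping ones: matching the three weight normalizations (the Hermitian weight $k$, the elliptic weight $k-1$, and the exponent $k-2$ produced by $\nu'_{k-1}$) in the definition of $g_d$, and confirming that $a_D(\ell)$ is a unit for every contributing $\ell$. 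The genuine analytic input—that the arithmetic of $f'_{k-1}$ varies $p$-adically analytically—has already been isolated in Theorem \ref{newfamily}, so no further hard estimate is needed here.
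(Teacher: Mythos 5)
Your proposal is correct and takes essentially the same route as the paper's proof: interpolate $\alpha^*_{F_{f'_{k-1}}}$ by combining Theorem \ref{newfamily} with the formula \eqref{Fc11}, interpolate $d^{k-1}$ via Hida's power series $A_d$ twisted by $\omega(d)^{k_0}$, and assemble $\mC_{F_{\mF}}(M)$ as the finite sum over $d\mid\epsilon(M)$, $\gcd(d,p)=1$, exactly as in the paper's definition $\mC_{F_{\mF}}(M)=\sum A_{k_0,d}B_{D_K\det M/d^2}(\mF)$. If anything, your evaluation of $A_d$ at $(1+p)^2u-1$ (so that $\nu_{k-1,1}$, which sends the group-like element $u=[1+p]$ to $(1+p)^{k-2}$, lands on the point $(1+p)^k-1$ required by the lemma) is a more careful rendering of the paper's terse passage to $\tilde{A}_{k_0,d}\in\bfZ_p[[\Gamma]]$, and your explicit treatment of singular $T$ and of the unit $a_D(\ell)=1-\chi_K(\ell)$ fills in details the paper leaves implicit.
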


\begin{proof} By Theorem \ref{newfamily} we know that $b_n(\mF)\in \bfI$ specializes under extension $\nu'_{k-1}$ to the $n$th Fourier coefficient of $f'_{k-1}$. Combining this with Theorem \ref{lift} we obtain $B_n(\mF):= -i\frac{\sqrt{D}}{a_D(n)} b_n(\mF) \in \bfI$ interpolating the values of the function $\alpha^*_{F_f}$. 
Set $A_{k_0,d}(T):= \omega(d)^{k_0}A_d(T) \in \bfZ_p[[T]]$ and let $\tilde{A}_{k_0,d}$ be the element of $\bfZ_p[[\Gamma]]$ corresponding to it under the isomorphism $T^k \mapsto (1+p)^k-1$. Set $$\mC_{F_{\mF}}(M):= \sum_{\substack{d \in \bfZ_+\\ d\mid \epsilon(M)\\ \textup{gcd}(d, p)=1}} A_{k_0,d}B_{D_K \det M/d^2}(\mF).$$
\end{proof}

\bibliographystyle{amsalpha}

\bibliography{standard2}

\end{document}